\documentclass[11pt]{article}
\usepackage{amsthm,amsfonts,amsmath,amssymb}

\setlength{\voffset}{-.75truein}
\setlength{\textheight}{9truein}
\setlength{\textwidth}{6.5truein}
\setlength{\hoffset}{-.7truein}

\usepackage{amsthm,amsfonts,amsmath, amscd}
\swapnumbers
                                %
                                %

\pagestyle{myheadings}

                                %
\theoremstyle{plain}

\newtheorem{thm}{THEOREM}[section]
\newtheorem{lm}[thm]{LEMMA}
\newtheorem{cl}[thm]{COROLLARY}
\newtheorem{prop}[thm]{PROPOSITION}
\theoremstyle{definition}
\newtheorem{defi}[thm]{DEFINITION}
\theoremstyle{definition}
\newtheorem{remark}[thm]{Remark}
\newcommand{\upchi}{\raise1pt\hbox{$\chi$}}
\newcommand{\R}{{\mathord{\mathbb R}}}

\renewcommand{\a}{\alpha}
\renewcommand{\b}{\beta}

\newcommand{\F}{{\mathcal{F}}}

\newcommand{\D}{{\mathcal{D}}}

\renewcommand{\|}{{\Vert}}
\numberwithin{equation}{section}
\pagestyle{myheadings} \sloppy

\def\dd{{\rm d}}
\def\H{{\mathcal H}}
\def\W{{\rm  W}}

\begin{document}

\markboth{\scriptsize{EACAF \today}}{\scriptsize{EACAF \today}}

\title{Stability for a GNS inequality and the Log-HLS inequality,\\
with
application to the critical mass Keller-Segel equation}

\author{\vspace{5pt} Eric A. Carlen$^1$ and 
Alessio Figalli$^{2}$   \\
\vspace{5pt}\small{$1.$ Department of Mathematics, Hill Center,}\\[-6pt]
\small{Rutgers University,
110 Frelinghuysen Road
Piscataway NJ 08854-8019 USA}\\
\vspace{5pt}\small{$2.$ Department of Mathematics, The University of Texas at Austin,}\\[-6pt]
\small{1 University Station C1200, Austin TX 78712 USA}
 }
 
\maketitle 
\footnotetext                                                                         
[1]{Work partially
supported by U.S. National Science Foundation
grant DMS 0901632.    }                           
                           \footnotetext
[2]{Work partially
supported by U.S. National Science Foundation
grant DMS 0969962.\\
\copyright\, 2011 by the authors. This paper may be
reproduced, in its
entirety, for non-commercial purposes.}

\maketitle

\begin{abstract}
Starting from the quantitative stability result of Bianchi and Egnell
for the $2$-Sobolev inequality,
we deduce several different stability results for a Gagliardo-Nirenberg-Sobolev inequality in the plane.
Then, exploiting the connection between this inequality and a fast diffusion equation,
we get a quantitative stability for the Log-HLS inequality.
Finally, using all these estimates, we prove 
a quantitative convergence result for the critical mass Keller-Segel system.
\end{abstract}

\section{Introduction}  
Let $W^{1,2}(\R^n)$ denote the space of measurable functions on $\R^n$  that have a square integrable distributional gradient. 
The Gagliardo-Nirenberg-Sobolev (GNS) inequality states that,
for $n \geq 2$  and all $1\leq p \leq q < r(n)$ (with $r(2):=\infty$, and $r(n) := 2n/(n-2)$ if $n\geq 3$),  there is a finite constant $C$ such that  for  all $u\in W^{1,2}(\R^n)$,  
\begin{equation}\label{gnsgen}
\|u\|_q \leq  C\|u\|_p^{1-\theta}\|\nabla u\|_2^\theta\ 
\end{equation}
where
\begin{equation}\label{gnsgen2}
\frac{1}{q} = \frac{\theta}{r(n)} + \frac{1-\theta}{p}\ .
\end{equation}
For $n\geq 3$ (so that $r(n) < \infty$), (\ref{gnsgen}) is valid also for $q = r(n)$, in which case  (\ref{gnsgen2}) gives $\theta =1$
and  (\ref{gnsgen}) reduces to the Sobolev inequality
\begin{equation}\label{sharpsob}
\|u\|_{2n/(n-2)}^2 \leq S_n \|\nabla u\|_2^2\ ,
\end{equation}
for which the sharp constant $S_n$ is known.

The are a few other choices of the $p$ and $q$ for which sharp constants are known. For $p=1$ and $q=2$, 
 (\ref{gnsgen2}) gives $\theta =n/(n+2)$, so that  (\ref{gnsgen}) reduces to the sharp Nash inequality \cite{CarLos2}
 \begin{equation}\label{sharpNash}
\|u\|_{2}^2 \leq C_n \|\nabla u\|_2^{n/(n+2)}\|u\|_1^{2/(n+2)}\ .
\end{equation}
(This inequality is valid also for $n=1$, even though $r(1)$ is negative.)

More recently, the sharp constant has been found \cite{DD} for a one-parameter family of GNS inequalities for each $n\geq 2$:  For $t> 0$, let $p = t+1$,
and let $q=2t$. Then
\begin{equation}\label{gnsdd}
\|u\|_{2t} \leq A_{n,t} \|\nabla u\|_2^\theta \|u\|_{t+1}^{1-\theta}\,,\qquad
\theta=\frac{n(t-1)}{t[2n-(1+t)(n-2)]}\,.
\end{equation}
(This inequality is a trivial identity for $t=1$, and is valid even for $t<1/2$, in which case $p < 1$ so that strictly speaking, for $0<t<1/2$,
the sharp inequality is not included in (\ref{gnsgen}).)

It turns out that there is a close relation between the sharp Sobolev inequality (\ref{sharpsob}) and the family of GNS inequalities (\ref{gnsdd}).
One aspect of this is that the functions $u$ that saturate these inequalities are simply powers of one another:  The optimal constant
$S_n$ in  (\ref{sharpsob}) is given by  \cite{aub, ros, tal}
\begin{equation}\label{sharpsob2}
 S_n = \frac{\|v\|_{2n/(n-2)}^2}{ \|\nabla v\|_2^2} \qquad {\rm where}\qquad v(x) = (1+|x|^2)^{-(n-2)/2}\ ,
\end{equation}
and moreover, with this value of $S_n$, there is equality in  (\ref{sharpsob}) if and only if $u$ is a multiple of 
$v(\mu(x-x_0))$ for some $\mu>0$ and some $x_0\in \R^n$.

Likewise, for $t>1$ the optimal constant $A_{n,t}$  in (\ref{gnsdd}) is given by \cite{DD}
\begin{equation}\label{gnsdd2}
 A_{n,t} = \frac{ \|v\|_{2t}}{\|v\|_{t+1}^{1-\theta}  \|\nabla v\|_2^\theta} \qquad {\rm where}\qquad v(x) = (1+|x|^2)^{-1/(t-1)}\ ,
\end{equation}
and moreover, with this value of $A_{n,t}$, there is equality in  (\ref{sharpsob}) if and only if $u$ is a multiple of 
$v(\mu(x-x_0))$ for some $\mu>0$ and some $x_0\in \R^n$.  
However, this is a very particular feature of this family: the sharp Nash inequality has optimizers of an entirely different form; see \cite{CarLos2}. 

Another aspect of this close relation between (\ref{sharpsob}) and  (\ref{gnsdd}) is that both inequalities    can be proved using ideas 
coming from the theory of optimal mass transportation \cite{CNV}. To apply these kinds of ideas,
one should consider   
(\ref{sharpsob}) and  (\ref{gnsdd}) as inequalities for a mass density $\rho(x) := |u(x)|^q$.  
(Throughout this paper, by a {\em density} we mean a non-negative integrable function.)
More precisely, it turns out for $r\geq 1-1/n$,  that the functional
$$\rho\mapsto  \frac{1}{r-1}\int _{\R^n}\rho^{r}(x)\dd x$$
is convex along the {\em displacement interpolation} $\rho_t$, $0\leq t \leq 1$, between two densities $\rho_0$ and 
$\rho_1$ of the same mass on $\R^n$ \cite{McC}.  Taking $\rho_0(x) = v^q(x)$ with $v$ as above, and taking $\rho_1(x) = u^q(x)$ where 
$u$ is a non-negative function with $\|u\|_q = \|v\|_q$,
the ``above the tangent line inequality'' for convex functions translates into (\ref{sharpsob}) and  (\ref{gnsdd}), as shown in \cite{CNV}.

 In this paper we are concerned with the {\em stability} properties of the GNS inequalities (\ref{gnsdd}), and the applications of
 this stability to certain partial differential equations. In fact,
 because of its connection with the Keller-Segel equation that we consider here,
 we shall focus only on the $n=2$, $t=3$  case of  (\ref{gnsdd}).
 
This case may be written explicitly as
 \begin{equation}\label{ourgns}
 \pi \int_{\R^2}u^6(x)\dd x \leq  \left(\int_{\R^2}|\nabla u(x)|^2 \dd x\right)\left( \int_{\R^2}u^4(x)\dd x\right)\ ,
 \end{equation}
 where $u$, here and throughout the rest of the paper, is a non-negative function on $\R^2$.

\begin{defi}[GNS deficit  functional]\label{GNSdefdef} Given a  non-negative function $u$ in
$W^{1,2}(\R^2)$,
define $\delta_{{\rm GNS}}[u]$ by
\begin{equation}\label{rob6A}
\delta_{{\rm GNS}}[u] :=  \left( \int_{\R^2} |\nabla u|^2\dd y\right)^{1/2}\left(\int_{\R^2} u^4 \dd y \right)^{1/2} - 
\left(\pi \int_{\R^2} u^6\dd y \right)^{1/2}\ .
\end{equation} 
Also, for $\lambda>0$ and $x_0\in \R^2$, define 
\begin{equation}\label{rob6Z}
v_{\lambda,x_0} := (1 + \lambda^2|x-x_0|^2)^{-1/2}\ .
\end{equation}
Also, throughout the paper, we use $v(x)$ to denote the function $v_{1,0}$; i.e.,
\begin{equation}\label{rob6ZZ}
v(x) := (1+ |x|^2)^{-1/2}\ .
\end{equation}
\end{defi}
 
 By the theorem \cite{DD}  of Del Pino and Dolbeault,  $\delta_{{\rm GNS}}[u] > 0$ unless $u$ is a multiple of $v_{\lambda,x_0}$ for some $\lambda>0$ and some $x_0\in \R^2$.  The question addressed in this paper is:  
 
 \medskip
 \noindent{$\bullet$}
 {\it When  $\delta_{{\rm GNS}}[u] = 0$ is small, in what sense must
  $u$ be close to some multiple of $v_{\lambda,x_0}$?}
  \medskip
  
As indicated above, it is natural to think of the GNS inequality as an inequality concerning densities $\rho$, and hence it is natural to think
of  $\delta_{{\rm GNS}}$ in this way too. However, associated to each $u$ there are {\em two} natural densities to consider: $\rho(x) = u^6(x)$ and
$\sigma(x) =  u^4(x)$.
Indeed, $u^6$ is the density with appears in the optimal transportation proof
(and, for scaling reasons, it is the ``natural'' quantity to control using the deficit),
while $u^4$ is the density which appears in the application
to the Keller-Segel equations. Therefore, our notation refers to $\delta_{{\rm GNS}}$ as a function of $u$.

Our first main result is:

\begin{thm}
\label{thm:stability 6A}
Let $u \in W^{1,2}(\R^2)$ be a non-negative function such that $\|u\|_6 = \|v\|_6$. Then there exist universal constants
$K_1,\delta_1>0$ such that, whenever $\delta_{{\rm GNS}}[u] \leq \delta_1$,
\begin{equation}
\label{eq:stability 6A}
\inf_{\lambda>0,x_0\in \R^2}  \|u^6- \lambda^{2}v^6_{\lambda,x_0}\|_1\leq  K_1\delta_{{\rm GNS}} [u]^{1/2}.
\end{equation}
\end{thm}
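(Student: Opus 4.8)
The plan is to transfer the Bianchi--Egnell stability result for the $2$-Sobolev inequality in dimension $n$ (for suitable $n$, or in a fractional/variational sense) to the GNS inequality \eqref{ourgns} via the change of variables $\rho = u^6$, $v^6_{\lambda,x_0} \leftrightarrow$ Aubin--Talenti profile, exploiting the fact pointed out in the introduction that the optimizers of \eqref{ourgns} are powers of the Sobolev optimizers. Concretely, recall from \eqref{sharpsob2} and \eqref{gnsdd2} that $v(x) = (1+|x|^2)^{-1/2}$ in the $n=2$, $t=3$ case is the restriction to the GNS family of the family of Aubin--Talenti bubbles; one sets up a correspondence between a near-optimizer $u$ of the GNS inequality and a function in $\dot W^{1,2}$ (of the relevant dimension) that is a near-optimizer of the Sobolev inequality, in such a way that $\delta_{\rm GNS}[u]$ controls the Sobolev deficit from above up to a constant, at least in the regime $\delta_{\rm GNS}[u] \le \delta_1$ where $u$ is already known (by a compactness/concentration-compactness argument, or by an argument already present earlier in the paper) to be $L^6$-close to the manifold $\mathcal{M} := \{c\,v_{\lambda,x_0}\}$.

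The key steps, in order: \emph{Step 1.} Normalize: since $\|u\|_6 = \|v\|_6$ and $\delta_{\rm GNS}[u]$ is small, deduce that $\|\nabla u\|_2\|u\|_4$ is close to its optimal value and hence $\|\nabla u\|_2$ and $\|u\|_4$ are each bounded above and below by universal constants (using \eqref{ourgns} together with, e.g., a Hölder interpolation between $L^4$, $L^6$ and $\dot W^{1,2}$ to get a lower bound on $\|u\|_4$). \emph{Step 2.} Linearize the GNS deficit around the manifold $\mathcal{M}$: writing $u = v_{\lambda,x_0} + \phi$ for the optimal $(\lambda, x_0)$, expand $\delta_{\rm GNS}$ to second order in $\phi$. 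The first-order term vanishes by optimality of $v_{\lambda,x_0}$ in \eqref{ourgns}, and the second-order term is a quadratic form $Q(\phi)$ in $\phi$ whose nonnegativity, modulo the zero modes coming from the $3$-parameter family of dilations and translations (and the scaling $c$), is exactly a spectral-gap statement. \emph{Step 3.} Establish the spectral gap: show $Q(\phi) \ge \kappa \|\phi\|_X^2$ for $\phi$ orthogonal (in the appropriate inner product) to the tangent space of $\mathcal{M}$, where $X$ is a suitable Sobolev-type norm. This is where one imports Bianchi--Egnell: the quadratic form $Q$ is, after the power change of variables, conjugate to the quadratic form appearing in the second variation of the Sobolev quotient, whose spectral gap is the analytic content of \cite{aub}/Rey/Bianchi--Egnell. \emph{Step 4.} Convert the $X$-norm control of $\phi$ into the $L^1$ control of $u^6 - \lambda^2 v^6_{\lambda,x_0}$: since $u$ and $v_{\lambda,x_0}$ are bounded in the relevant norms, $|u^6 - v^6_{\lambda,x_0}| \lesssim (u^5 + v^5_{\lambda,x_0})|u - v_{\lambda,x_0}|$ pointwise, and Hölder plus Step 3 gives $\|u^6 - \lambda^2 v^6_{\lambda,x_0}\|_1 \lesssim \|\phi\|_X \lesssim \delta_{\rm GNS}[u]^{1/2}$, after reinstating the dilation factor $\lambda^2$ coming from the $L^6$-normalization $\|\lambda v_{\lambda,x_0}\|_6 = \|v\|_6$. (The exponent $1/2$ is the signature of Bianchi--Egnell-type stability: $\delta_{\rm GNS} \gtrsim \|\phi\|_X^2$, so $\|\phi\|_X \lesssim \delta_{\rm GNS}^{1/2}$.)

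The main obstacle is Step 3 combined with Step 2: making the linearization rigorous for a \emph{non-smooth, merely $W^{1,2}$} competitor $u$, and establishing the spectral gap in a norm strong enough to eventually control the $L^1$ distance of the sixth powers but weak enough that the quadratic remainder in the Taylor expansion of $\delta_{\rm GNS}$ is genuinely lower order. This requires care in choosing $X$: too strong and one cannot bound the error terms by $\|\phi\|_X^2$; too weak and the spectral gap fails or the final $L^1$ estimate breaks. The cleanest route is probably to avoid a direct second-variation argument altogether and instead run the reduction entirely at the level of the densities: map $u^6$ to a near-Aubin--Talenti density, invoke the Bianchi--Egnell inequality as a black box for the Sobolev deficit, and then \emph{bound the GNS deficit below by the Sobolev deficit} via a one-line inequality relating $\|\nabla u\|_2^2\|u\|_4^2 - \pi\|u\|_6^3$ (a rescaling of the square of $\delta_{\rm GNS}$ after Step 1) to the Sobolev quotient of the associated function. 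If such a direct deficit-domination inequality is available, Steps 2--3 collapse and only Steps 1 and 4 remain, with the genuine work being the algebraic verification that the change of variables sends the GNS near-optimizer to a Sobolev near-optimizer with comparably small deficit.
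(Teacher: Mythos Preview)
Your proposal gestures in the right direction --- relate the GNS deficit to a Sobolev deficit and invoke Bianchi--Egnell --- but it misses the concrete mechanism that makes this work, and without that mechanism neither of your two routes closes.

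The paper's key idea, absent from your sketch, is an \emph{explicit lifting} from $\R^2$ to $\R^4$ due to Bakry--Gentil--Ledoux: given $u$ on $\R^2$ normalized so that $\|u\|_6=\|v\|_6$ and $\sqrt{2}\|\nabla u\|_2=\|u\|_4^2$, set $F(y)=u^{-2}(y)$ and define $f(x,y)=(F(y)+|x|^2)^{-1}$ on $\R^4$. A direct computation (Proposition~\ref{prop:Sob GN}) gives the exact \emph{identity}
\[
\delta_{\rm GNS}[u]=\sqrt{3}\Bigl(\tfrac{1}{4\pi}\sqrt{\tfrac{3}{2}}\,\|\nabla f\|_2^2-\|f\|_4^2\Bigr),
\]
so the 2D GNS deficit literally \emph{is} the 4D Sobolev deficit of $f$. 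Bianchi--Egnell then applies verbatim in $\R^4$ and yields $\inf\|f-g_{c,\mu,x_0,y_0}\|_4^2\le C_0\,\delta_{\rm GNS}[u]$. The remaining work, which is where most of the effort lies, is to translate this back: first show (Lemma~\ref{lem: c mu x0}) that because $f$ has the special structure $f(x,y)=(F(y)+|x|^2)^{-1}$ (in particular $f$ is even in $x$), the optimal 4D bubble in the Bianchi--Egnell infimum may be taken with $c=\mu=1$ and $x_0=0$, i.e.\ of the same structural form as $f$; then show (Lemma~\ref{lem: fg uv}) by splitting into regions $\{F<G/2\}$, $\{G/2\le F<G\}$, etc., that $\|f-g_{1,1,0,y_0}\|_4$ controls $\|u^6-v^6(\cdot-y_0)\|_1$. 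A final rescaling removes the auxiliary normalization $\sqrt{2}\|\nabla u\|_2=\|u\|_4^2$ and produces the $\lambda$.

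Your primary route (Steps~2--3: linearize the GNS functional, prove a spectral gap, and claim this is ``conjugate'' to the Bianchi--Egnell second variation) has a real gap: Bianchi--Egnell lives in $\R^n$ with $n\ge 3$, and you give no map sending a 2D near-optimizer $u$ to an $n$-dimensional Sobolev near-optimizer whose second variation matches yours. Without the $\R^4$ lifting, ``importing'' Bianchi--Egnell amounts to proving the spectral gap for the GNS second variation from scratch, which your proposal does not do. Your alternative suggestion at the end (``bound the GNS deficit below by the Sobolev deficit via a one-line inequality'') is in fact exactly what the paper does, but it is not a one-line inequality between abstract quotients; it requires the specific construction $f(x,y)=(u^{-2}(y)+|x|^2)^{-1}$, and then the nontrivial Lemmas~\ref{lem: c mu x0}--\ref{lem: fg uv} to come back down.
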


\begin{remark}\label{rmk:stability 6}
Actually, since  $\|\lambda^{2}v^6_{\lambda,x_0}\|_1 = \|v^6\|_1$ and
$\|u^6- \lambda^{2}v^6_{\lambda,x_0}\|_1 \leq \|u^6\|_1+\|v^6\|_1=\pi$ for all $\lambda>0$,
\eqref{eq:stability 6} holds with $K_1=\pi/\delta_1^{1/2}$
whenever $\delta_{{\rm GNS}}[u] \geq \delta_1$. So, up to enlarging $K_1$,
\eqref{eq:stability 6}
always holds without any restriction on $\delta_{{\rm GNS}}[u]$.
Moreover, also the sign restriction on $u$ is superfluous; see e.g. \cite{LL} or
\cite[Proof of Theorem 1.1]{CFMP} for a finer result.
However, since for the applications we have in mind
$u$ will always be nonnegative  and we are only interested in the regime when $\delta_{{\rm GNS}}[u]$ is small,
we have chosen to state the theorem
in this simple form.
\end{remark}

To obtain a similar result for the density $u^4(x)$, we need to require additional {\it a-priori} bounds. Essentially what we need is some sort
of bounds ensuring uniform integrability of the class of densities satisfying the bounds. For the PDE applications we have in mind, it is natural to
use {\em moment bounds} and {\em entropy bounds}.

Define
\begin{equation}\label{fo2}
N_p(u) = \int_{\R^2} |y|^p u^4(y)\dd y  \qquad{\rm and}\qquad S(u) =  \int_{\R^2} u^4 \log (u^4) \dd y\ .
\end{equation}

\begin{thm}\label{4thstab} Let $u \in W^{1,2}(\R^2)$ be a non-negative function such that $\|u\|_4 = \|v\|_4$. Suppose also that 
for some $A,B < \infty$ and some $1< p < 2$, 
\begin{equation}\label{condsZ}
S[u] =  \int_{\R^2} u^4 \log (u^4) \dd x  \leq A< \infty \qquad{\rm and}\qquad  N_p[u]  := \int_{\R^2} |y|^p u^4(y)\dd y\leq B< \infty\,.
\end{equation}
Moreover, assume that
\begin{equation}\label{condsZZ}
\int_{\R^2}y u^4\dd y = 0\ .
\end{equation}
Then, for constants $K_2,\delta_2>0$ depending only on $p$, $A$, and $B$, 
whenever $\delta_{{\rm GNS}}[u] \leq \delta_2$, 
\begin{equation}\label{infbndZ}
\inf_{\lambda>0} \|u^4 - \lambda^{2}v_\lambda ^4\|_1 \leq K_2 \delta[u]_{{\rm GNS}}^{(p-1)/(4p)}\ .
\end{equation}
Moreover, there is a constant $a>0$, depending only on $A$ and $B$, such that the infimum in (\ref{infbndZ}) is achieved at some $\lambda \in [a,1/a]$. 
\end{thm}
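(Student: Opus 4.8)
The plan is to deduce Theorem~\ref{4thstab} from Theorem~\ref{thm:stability 6A} by passing from control of $\|u^6-\lambda^2 v_{\lambda,x_0}^6\|_1$ to control of $\|u^4-\lambda^2 v_\lambda^4\|_1$, using the a-priori bounds \eqref{condsZ} to upgrade the $L^1$-closeness of the sixth powers to $L^1$-closeness of the fourth powers. First I would apply Theorem~\ref{thm:stability 6A} (rescaling so that $\|u\|_6=\|v\|_6$ is compatible with $\|u\|_4=\|v\|_4$, or more precisely re-deriving the analogue of \eqref{eq:stability 6A} under the present normalization; the two normalizations differ only by a fixed dilation since $v$ is fixed) to obtain parameters $\lambda_*>0$, $x_0\in\R^2$ with $\|u^6-\lambda_*^2 v_{\lambda_*,x_0}^6\|_1\lesssim \delta_{{\rm GNS}}[u]^{1/2}$. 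The centering hypothesis \eqref{condsZZ}, together with the moment bound $N_p[u]\le B$, should force $|x_0|$ and $\lambda_*$ to lie in a compact range: if $x_0$ were large or $\lambda_*$ were very small or very large, the first moment of $\lambda_*^2 v_{\lambda_*,x_0}^4$ would be far from zero (or the mass would escape), contradicting \eqref{condsZZ} once $\delta_{{\rm GNS}}[u]$ is small; this is where I would use the entropy bound too, to prevent mass concentration. This yields the claimed $a>0$ with $\lambda_*\in[a,1/a]$, and lets me replace $v_{\lambda_*,x_0}$ by $v_{\lambda_*}$ (i.e.\ set $x_0=0$) at the cost of another error controlled by $\delta_{{\rm GNS}}[u]^{1/2}$.

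The second step is the interpolation that converts an $L^1$ bound on $u^6$ into an $L^1$ bound on $u^4$. Write $f=u^4$ and $g=\lambda_*^2 v_{\lambda_*}^4$, both probability-type densities of equal mass $\pi$. We control $\|f^{3/2}-g^{3/2}\|_1$ (that is the sixth-power statement, since $u^6=(u^4)^{3/2}$) and want to bound $\|f-g\|_1$. The elementary inequality $|a-b|\le C|a^{3/2}-b^{3/2}|^{2/3}$ fails to be integrable directly, so instead I would split the domain: on the region where both $f$ and $g$ are bounded below by a threshold $\epsilon$, the map $s\mapsto s^{3/2}$ has derivative bounded below, so $|f-g|\lesssim \epsilon^{-1/2}|f^{3/2}-g^{3/2}|$ pointwise; on the region where $f$ or $g$ is small, I use the moment and entropy bounds on $f$ (and the explicit tails of $g$) to show the $L^1$ mass of $f+g$ there is small, uniformly, via a uniform-integrability / de la Vallée-Poussin argument — the entropy bound $S[u]\le A$ and the moment bound $N_p[u]\le B$ give exactly equi-integrability of the family $\{u^4\}$, which controls $\int_{\{f\le\epsilon\}}f$ and $\int_{\{|y|\ge R\}}f$. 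Balancing the threshold $\epsilon$ (and a truncation radius $R$) against $\delta_{{\rm GNS}}[u]^{1/2}$ produces a bound of the form $\delta_{{\rm GNS}}[u]^{c}$ for some explicit exponent; tracking constants, the natural exponent coming out of optimizing $\epsilon$ against the polynomial tail decay rate $p$ is $(p-1)/(4p)$, matching \eqref{infbndZ}.

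The main obstacle I expect is the quantitative equi-integrability step: turning the qualitative fact ``entropy plus $p$-th moment bounds imply uniform integrability'' into an explicit rate for $\int_{\{u^4\le\epsilon\}}u^4\,\dd y$ and $\int_{\{|y|\ge R\}}u^4\,\dd y$ as $\epsilon\to 0$, $R\to\infty$, and then optimizing the split cleanly enough to land on the stated exponent rather than something worse. The moment bound gives $\int_{\{|y|\ge R\}}u^4\le B R^{-p}$ for free; the delicate part is the low-value region, where I would use that $\int u^4\log(u^4)\le A$ together with $\int_{\{|y|\le R\}}u^4\le\pi$ to bound $\int_{\{u^4\le\epsilon,\,|y|\le R\}}u^4$ by something like $(\log(1/\epsilon))^{-1}$ times a constant depending on $A,B,R$ — one must be careful that $u^4\log(u^4)$ can be negative where $u^4<1$, so the bound has to be extracted from the positive part of the entropy controlling the mass at large values, combined with the a-priori total mass. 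A secondary technical point is making the reduction from $v_{\lambda_*,x_0}$ to $v_{\lambda_*}$ rigorous: one must quantify how much $\|\cdot\|_1$ changes under translating the optimizer, again using the moment bound to pin $x_0$ near the origin with an error compatible with the final exponent.
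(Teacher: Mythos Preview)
Your overall strategy---apply Theorem~\ref{thm:stability 6A}, convert control of sixth powers to fourth powers, then pin down the translation and scale---matches the paper's. But the execution you sketch diverges from the paper in two places, and in both the paper's route is cleaner and your proposed route has real difficulties.

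\textbf{The sixth-to-fourth conversion.} Your level-set split requires \emph{both} $f=u^4$ and $g=\lambda_*^2 v_{\lambda_*}^4$ to be bounded below by $\epsilon$. This leaves the region $\{f<\epsilon,\ g\ge\epsilon\}$, on which you need to bound $\int g$, and neither the entropy bound nor the moment bound on $u^4$ gives you that (the set can sit well inside the ball where $g$ is of order one, and you have no a-priori control on where $u^4$ is small). The paper avoids this entirely by observing that the algebraic inequality
\[
|a^4-b^4|\le \frac{2}{a^2+b^2}\,|a^6-b^6|\le \frac{2}{b^2}\,|a^6-b^6|
\]
needs a lower bound on $b=v$ \emph{only}. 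Since $v^2\ge (1+R^2)^{-1}$ on $B_R$, one gets $\int_{B_R}|\hat u^4-v^4|\le 2(1+R^2)\|\hat u^6-v^6\|_1$ with no hypothesis on $u$. The complement $\{|x|>R\}$ is then handled by combining the explicit tail $\int_{|x|>R}v^4\le\pi/(1+R^2)$ with the near-equality $\big|\|\hat u\|_4^4-\|v\|_4^4\big|\le C\delta_{{\rm GNS}}[u]$ (this last point is itself a nontrivial step, see below). Optimizing in $R$ gives $\|\hat u^4-v^4\|_1\le C\delta_{{\rm GNS}}[u]^{1/4}$. So the conversion produces the exponent $1/4$, independent of $p$; no equi-integrability argument is needed here, and the ``delicate part'' you worry about never arises.

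\textbf{Where the exponent $(p-1)/(4p)$ actually comes from.} It is not from the conversion step. It comes from removing the translation $y_0$, exactly as in Proposition~\ref{prop:translation}: the centering condition \eqref{condsZZ} and the moment bound $N_p[u]\le B$ yield $|y_0|\lesssim \delta_{{\rm GNS}}[u]^{1/(4q)}$ with $q=p/(p-1)$, and then $\|\hat u^4-\widetilde u^4\|_1\lesssim |y_0|$. This is the sole place $p$ enters the exponent.

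\textbf{Where the entropy is actually used.} Not for equi-integrability in the conversion, but to bound the scale parameter $\lambda$ from below. After the conversion one knows $\int_{B_\lambda(\lambda y_0)}u^4\ge\pi/4$; Jensen's inequality then gives $\int u^4\log u^4\gtrsim -\log\lambda$ minus a term controlled by $N_1(u)$, so $S[u]\le A$ forces $\lambda$ away from $0$. The upper bound on $\lambda$ comes from the lower bound on $\|u\|_6$ furnished by $N_p[u]\le B$.

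\textbf{The normalization issue.} Your parenthetical that the two normalizations ``differ only by a fixed dilation'' is not quite right: for a general $u$ you cannot simultaneously enforce $\|u\|_4=\|v\|_4$ and $\|u\|_6=\|v\|_6$. The paper passes to $\widetilde u=\frac{\|v\|_6}{\|u\|_6}\lambda^{1/3}u(\lambda\cdot)$, with $\lambda$ chosen so that $\sqrt{2}\|\nabla\widetilde u\|_2=\|\widetilde u\|_4^2$, and then proves $\big|\|\widetilde u\|_4^4-\|v\|_4^4\big|\le C\delta_{{\rm GNS}}[u]$ and $\big|\lambda^{2/3}\|u\|_6^4-\|v\|_6^4\big|\le C\delta_{{\rm GNS}}[u]$ as consequences of small deficit. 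These quantitative statements are what make the tail estimate and the unwinding of the rescaling work; they are not automatic.
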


To explain how to prove these results,
let us first recall that a stability result for the sharp Sobolev inequality  (\ref{sharpsob}) has been proved some time ago  by Bianchi and Egnell \cite{BE}.
It states that
there is a constant $C_n$, $n\geq 3$, so that for all $f\in W^{1,2}(\R^n)$, 
\begin{equation}\label{bestab}
\|\nabla f\|_2^2 - S_n\|f\|_{2n/(n-2)}^2 \geq C_n \inf_{c,\mu>0 ,\ x_0\in \R^n}\|\nabla f - c\nabla h_{\mu,x_0}\|_2^2
\end{equation}
where
$$h_{\mu,x_0}(x) :=  (1+ \mu^2|x-x_0|^2)^{-(n-2)/2}\ .$$

The proof uses a compactness argument  so there is no information on the value of $C_n$. On the other hand, the metric used on the
right hand side in (\ref{bestab}) is as strong as one could hope for,
and in this sense
the result of Bianchi and Egnell is remarkably strong. 

Unfortunately, the fact that typical GNS inequalities involve three norms and not two,
prevents any direct adaptation of the proof of
Bianchi and Egnell to any of the other cases of the GNS inequality for which the optimizers are known. Moreover, other recent proofs for stability 
based on optimal transportation \cite{FiMP1} or symmetrization techniques
\cite{FMP1,FMP2,CFMP,FiMP2} did not produce (at least up to now) any results in this situation.

However, it has recently been shown \cite{BGL} that one may deduce the  {\em sharp forms} of the GNS inequalities in (\ref{gnsdd})
from the sharp Sobolev inequality (\ref{sharpsob}). Of course, it is quite easy to deduce the GNS inequalities with a non-optimal constant
from the Sobolev inequality  and H\"older's inequality. The argument in \cite{BGL}, which we learned from Dominique Bakry,
is more subtle: In particular, as we explain in the next section, one deduces the particular
{\em two-dimensional} GNS inequality  (\ref{ourgns})
from the  {\em four-dimensional} Sobolev inequality. 

This derivation of  (\ref{ourgns}) provides the beginnings of a bridge between the Bianchi-Egnell stability result for the Sobolev inequality
and our theorems on stability for (\ref{ourgns}). Building and crossing the bridge still requires further work, and this is carried out in
Section 2 of the paper where we prove Theorems~\ref{thm:stability 6A} and \ref{4thstab}. \\

The third section of the paper concerns  two evolution equations and  three functionals, all with close connection to the GNS inequality (\ref{ourgns}).  The two equations, both describing the evolution of mass densities on $\R^2$, are:

\medskip
\noindent{\it (1)}  A two dimensional fast diffusion equation:
\begin{equation}\label{fdA}
 \frac{\partial \sigma}{\partial t}(t,x) =
 \Delta \sqrt{\sigma(t,x)}  +
 2\sqrt{\frac{\pi}{\kappa M}} \,{\rm div}(x\,\sigma(t,x))\ .
 \end{equation}
 Here $\kappa$ and $M$ are positive parameters that set the scale and mass of stationary solutions,
 as we shall explain. (It will be convenient to keep them separate).

\medskip
\noindent{\it (2)}  The {\em Keller-Segel equation}:
\begin{equation}\label{KS}
\frac{\partial \rho}{\partial t}(t,x) ={\rm div } \bigl[ \nabla \rho(t,x)-\, \rho(t,x)\nabla c(t,x)\bigr]\ ,
\end{equation}
where
$$c(t,x)=-\frac{1}{2\,\pi}\int_{\R^2}\log|x-y| \rho(t,y) \dd y\,.
$$

The fast diffusion equation (\ref{fdA}) has the steady state solutions
\begin{equation}\label{sted}
 \sigma_{\kappa,M}(x):=\frac{M}{\pi}\frac{\kappa}{{\left(\kappa +|x|^2 \right)^2}}\,.
\end{equation}
Note that $\int_{\R^2}\sigma_{\kappa,M}(x)\dd x = M$ for all $\kappa$.

The densities  in (\ref{sted}) with $M = 8\pi$ are also the steady states of the Keller-Segel system \eqref{KS}, and $M=8\pi$ is the {\em critical mass} for \eqref{KS}:
If the initial data has a mass less than $8\pi$, diffusion dominates and the solution diffuses away to infinity; if the 
initial data has a mass greater than $8\pi$, the restoring drift dominates and the solution collapses in finite time \cite{DP}.

We now remark that each $\sigma_{\kappa,M}$ is the  fourth power of a GNS optimizer; equivalently, they are multiples of the densities $v_\lambda^4$ that figure in Theorem~\ref{4thstab}.  This is the first indication of a close connection of these two equations to one another and to the GNS inequality (\ref{ourgns}). 

To go further, we note that both of these equations are gradient flow for the $2$-Wasserstein metric in the sense of Otto \cite{otto}. (For this fact, and further background on the Wasserstein metric, gradient flow, and these equations, see \cite{BCC}.)

The fast diffusion equation is gradient flow for 
the functional $\H_{\kappa,M}$, where:

\begin{defi}[Fast diffusion entropy]\label{fdent}
 \begin{equation}\label{def Hlambda}
\H_{\kappa,M}[\sigma] := \int_{\R^2}\frac{|\sqrt{\sigma}(y) - \sqrt{\sigma_{\kappa,M}}(y)|^2}{ \sqrt{\sigma_{\kappa.M}}(y)}\dd y
\end{equation}
\end{defi}
It is evident that $\H_{\kappa,M}[\sigma] $ is uniquely minimized at $\sigma = \sigma_{\kappa,M}$, and it is very easy to deduce an $L^1$
 stability result for this functional; see \cite{BCC}. 

On the other hand, the Keller-Segel system is gradient flow for the following ``free energy'' functional:
\begin{equation}\label{fks}
\F_{{\rm KS}}[\rho]  = \int_{\R^2} \rho(x) \log \rho(x)\dd x  +
\frac{1}{4\pi} \iint_{\R^2 \times \R^2} \rho(x) \log |x-y| \rho(y)\dd x \dd y \ .
\end{equation}

We are concerned with the critical mass case $M = 8\pi$, in which case this coincides the the logarithmic Hardy-Littlewood-Sobolev (Log-HLS)
functional:

\begin{defi}[Log-HLS Functional]\label{lhlsfude}
 The Log-HLS functional $\F$  is defined by
$$
\F[\rho] := \,\int_{\R^2}\rho (x)\log \rho(x)\dd x
+2\left( \int_{\R^2}\rho(x)\dd x\right)^{-1}\!\!\!\!
\iint_{\R^2\times\R^2}\rho(x) \log|x-y| \rho(y)\dd x \dd y \nonumber\ $$
on the domain consisting of  densities $\rho$ on $\R^2$ such that both $\rho \ln \rho$ and $\rho\ln
(e+|x|^2)$ belong  to $L^1(\R^2)$ (we define $\F[\rho] := +\infty$ otherwise). 
\end{defi}

The logarithmic HLS functional $\F$
is invariant under scale changes: for $a>0$ and $\rho$ in
the domain of $\F$,  $\F[\rho] = \F[\rho_{(a)}]$ for all $a>0$, where $\rho_{(a)} := a^2\rho(ax)$.  In particular,
$\F[\sigma_{M,\kappa}]$ is independent of  $\kappa$. One computes \cite{Be,CarLos}
\begin{equation}\label{cmdef}
\F[\sigma_{M,\kappa}] := C(M)  =  M(1+\log\pi-\log(M))\ .
\end{equation}

The sharp Log-HLS
inequality \cite{Be,CarLos} states that $\F[\rho] \geq C(M)$ for all densities of mass $M>0$.
Moreover, there is equality if and only if 
$\rho(x) = \sigma_{\kappa,M}(x-x_0)$
 for  some $\kappa > 0$ and some $x_0\in
\R^2$.  Thus, among densities of fixed mass $M$, the $\sigma_{\kappa,M}$ are the unique minimizers of $\F$. However, in contrast with the fast diffusion entropy
$\H_{\kappa,M}$, is not so simple to deduce an
$L^1$ stability result for the Log-HLS inequality (i.e., for the minimization problem associated to $\F$ at fixed mass).  One of the main results 
proved in Section 3 is a stability result for this inequality; see Theorem~\ref{loghlsstab} below. 

The fact that the fast diffusion equation  (\ref{fdA}) is a 
gradient flow for $\H_{\kappa,M}$ implies that  $\H_{\kappa,M}[\sigma(\cdot,t)]$ is monotone decreasing
along solutions of  (\ref{fdA}) with initial data for which $\H_{\kappa,M}[\sigma(\cdot,0)]$ is finite.
Likewise, the fact that the Keller-Segel equation is a gradient flow for
the functional $\F_{{\rm KS}}$ implies that the Log-HLS functional $\F[\rho(\cdot,t)]$ is decreasing along solutions  of (\ref{KS}) for initial data with
the critical mass $M = 8\pi$  such that $\F[\rho(\cdot,0)]$ is finite. 

 There is, nonetheless, a fundamental difference: {\em The functional 
 $\H_{\kappa,M}$ is uniformly displacement convex \cite{BCC}}, and as shown by Otto \cite{otto}, evolution equations that are ${\rm W}_2$-gradient flows of 
uniformly displacement convex functionals have an exponential rate of convergence to equilibrium; i.e., the minimizers of the functional. 
 This yields an exponential rate of convergence to equilibrium for the fast diffusion equation.  
 
 However, the Log-HLS functional {\em is not} displacement convex (nor is it even convex in the uusal sense), and hence the gradient flow structure by itself does not provide any sort of
 rate of convergence for this equation. We shall show that our quantitative stability estimates for the GNS inequality lead to a stability result for the Log-HLS inequality,
 and combining these  results we get a quantitative 
 rate of convergence estimate for the Keller-Segel equation; see Theorem \ref{thm:KS}.

 A  key to this is a surprising interplay between $\H_{\kappa,M}$ and $\F$
 along our two evolutions. As noted above, by their nature as gradient flow evolutions, it is naturally true  that $\F[\rho(\cdot,t)]$
 decreases along critical mass solutions of the Keller-Segel equation, and it is naturally true that 
 $\H_{\kappa,M}[\rho(\cdot,t)]$ decreases along solutions of the fast diffusion equation  (\ref{fdA}).

More surprisingly, it has recently been shown \cite{CCL,BCC} that, in fact, $\F[\sigma(\cdot,t)]$  is also decreasing along  solutions of (\ref{fdA}),
and that 
$\H_{\kappa,8\pi}[\rho(\cdot,t)]$ is also decreasing along critical mass solutions of the Keller-Segel equation (\ref{KS}).  

In fact, as shown in \cite{CCL}, for any solution $\sigma(x,t)$ of (\ref{fdA}), 
\begin{equation}\label{disslem}
\frac{{\rm d}}{{\rm d}t} \F[\sigma(\cdot,t)] = - \frac{8\pi}{M}\D[ \sigma(\cdot,t)]\ ,
\end{equation}
where $\D$ denotes the dissipation functional defined as follows:  

\begin{defi}[Dissipation functional]\label{dissdef}
For any density $\sigma$ on $\R^2$, let $u := \sigma^{1/4}$.   If $u$ has a square integrable distributional gradient, define
\begin{equation}\label{diss}
{\mathcal D}[\sigma] :=  \frac{1}{\pi}\left(\|\nabla u\|_2^2\|u\|_4^4 - \pi \|u\|_6^6\right)\ .
\end{equation}
Otherwise, define ${\mathcal D}[\sigma]$ to be infinite.  
\end{defi}

Note that ${\mathcal D}[\sigma] \geq 0$ as a consequence (actually, a restatement) of the sharp GNS inequality (\ref{ourgns}).

What is actually proved
for the critical mass Keller-Segel equation is somewhat less. However, in
\cite{BCC}, a natural class of solutions called ``properly dissipative solutions'' is constructed, along which
\begin{equation}\label{entdec}
\H_{\kappa,8\pi}[\rho(\cdot,T)] + \int_0^T \D[\rho(\cdot,t)] \dd t  \leq \H_{\kappa,\pi}[\rho(\cdot,0)]\
\end{equation}
 for all $T>0$. (This is evidently an analog of  (\ref{disslem}) in integrated form.)\\

Our goal here is to understand the asymptotic behavior of a properly dissipative solution
$\rho(t)$ of \eqref{KS} starting from some $\rho$ such that $\H_{\kappa,\pi}[\rho] < \infty$.

The first observation is that, as an immediate consequence of \eqref{entdec}, for any $T>1$
\begin{equation}\label{decay}
\inf_{t\in [1,T]}\D[\rho(\cdot,t)] \leq \frac1{T-1}  \int_{1}^T \D[\rho(\cdot,t)] \dd t  \leq  \frac1{T-1}  \H_{\kappa,\pi}[\rho] \ .
\end{equation}
(As we will see in Section \ref{sect:KS},
the reason for considering $t \in [1,T]$ is to ensure that some time passes
so that the solution enjoys some further regularity properties
needed to apply our estimates.)

Now, observe that for any density $\sigma$ on $\R^2$, 
 \begin{eqnarray}\label{pass}
 {\mathcal D}[\sigma]  &=& (\|\nabla \sigma^{1/4}\|_2\|\sigma^{1/4}\|_4^2 + \sqrt{\pi} \|\sigma^{1/4}\|_6^3)\delta_{{\rm GNS}}(\sigma^{1/4})\nonumber\\
 &=& (\|\nabla \sigma^{1/4}\|_2\|\sigma\|_1^{1/2} + \sqrt{\pi} \|\sigma\|_{3/2}^2)\delta_{{\rm GNS}}(\sigma^{1/4})\ .
 \end{eqnarray}
Hence, granted (for now) an {\em a-priori}  bound on $\int_{\R^2}|x|^p \rho(x,t)\dd x$ for some $1< p < 2$
for $t \geq 1$, 
we have a lower bound on $\|\rho(\cdot,t )\|_{3/2}$ depending only on the $p$th 
moment bound.
From this and \eqref{decay} we deduce that, for any $T \geq 2$, there exists \textit{some}  $\bar t\in [1,T]$ such that
$$
\delta_{{\rm GNS}}[\rho^{1/4}(\cdot,t)] \leq   \frac{C}{T}   \H_{\kappa,\pi}[\rho] \ ,
$$
where $C$ is universal (as it depends only on the $p$th moment bound).

Then, granted also   an {a-priori} upper bound on the entropy $\int_{\R^2} \rho \log \rho(x,t)\dd x$ for $t \geq 1/2$, 
applying Theorem \ref{4thstab} we conclude that for \textit{some} $\mu>0$,
\begin{equation}\label{upbnd1}
\|\rho(\cdot,\bar t) - \sigma_{\mu,8\pi}\|_1  \leq
C \left(\frac{1}{T}   \H_{\kappa,\pi}[\rho] \right)^{(p-1)/4p}\ ,
\end{equation}
(recall that the density $v_\lambda^4$ is a multiple of some   $\sigma_{\mu,8\pi}$).

The inequality (\ref{upbnd1}) bounds the time it takes a solution of the critical mass Keller-Segel equation to approach
$\sigma_{\mu,8\pi}$ for  {\em some} $\mu$. 
However, to get a quantitative convergence result, we must do two more things: 
First, show that $\rho(\cdot,t)$
approaches $\sigma_{\mu,8\pi}$ for $\mu = \kappa$, and then show that eventually it {\em remains} close. 

The first point is relatively easy, since $\H_{\kappa,8\pi}[\sigma_{\mu,8\pi}] = \infty$ for $\mu\neq \kappa$ (because of the sensitivity of 
$\H_{\kappa,M}[\rho]$ to the tail of  $\rho$; see \cite{BCC}). 

The second requires more work: The strategy used in \cite{BCC} was to show that eventually 
$\F[\rho(\cdot,t)]$ becomes small. Since this quantity is monotone, once small, it stays small. 
Then one uses a stability inequality for the Log-HLS inequality to conclude that 
$\|\rho(\cdot,t) - \sigma_{\kappa,8\pi}\|_1$ stays small.   The stability inequality for the Log-HLS inequality used in
 \cite{BCC} relied on a compactness argument, and thus gave $L^1$ convergence to the steady state, but without any rate estimate. 
 Moreover, the argument in \cite{BCC} also used compactness arguments to deduce that 
 $\|\rho(\cdot,t) - \sigma_{\mu,8\pi}\|_1$ eventually becomes small for some $\mu$, so that there was no quantitative estimate
 on the time to first  approach the set of densities $\{\sigma_{\kappa,8\pi}\: \ \kappa>0\}$.
 
To provide a convergence result with quantitative bounds we do the following:
First we show almost Lipschitz regularity of $\mathcal F$ in $L^1$ (Theorem \ref{loghlsfunccont}),
and we combine it with (\ref{upbnd1}) and the fact that $p$ can be chosen close to $2$,
to deduce that
$$
\F[\rho(\cdot,\bar t)] -C(8\pi) \leq C T^{-(1-\epsilon)/8},
$$
where $C(M)$ is defined in (\ref{cmdef}). 
Since $\bar t \leq T$ and $\F[\rho(\cdot,t)]$ is decreasing, we deduce that
\begin{equation}\label{exfredec}
\F[\rho(\cdot,T)] -C(8\pi) \leq C T^{-(1-\epsilon)/8}
\end{equation}
\textit{for all} $T \geq 2$.

This brings us to our final stability result:

\begin{defi}[Log-HLS deficit]\label{loghlsdef}
For any density $\rho$ on $\R^2$ with $\int_{\R^2}\rho(x)\dd x = M$, 
we define $\delta_{\rm HLS}[\rho]$, the deficit in the log HLS inequality, as
\begin{equation}\label{app5}
\delta_{\rm HLS}[\rho] = \F[\rho] - M(1+\log\pi-\log(M))\ .
\end{equation}
\end{defi}

\begin{thm}[Stability for Log-HLS]\label{loghlsstab} 
Let $\rho$ be a density  of mass $M$ on $\R^2$ such that, for some $\kappa>0$,
\begin{equation}\label{hlbnd}
\H_{\kappa,M}[\rho] =: B_\H < \infty\ .
\end{equation}
Assume also that
\begin{equation}\label{fbnd}
\F[\rho] =: B_\F < \infty\ ,
\end{equation}
and
\begin{equation}\label{dlbnd}
\D[\rho] =: B_\D < \infty\ .
\end{equation}
Then,  for all $\epsilon>0$, there is a constant $C$, depending only on $\epsilon$, $M$, $\kappa$, $B_\H$, $B_\F$ and  $B_\D$,
such that
$$\| \rho-\sigma_{\mu,M} \|_1 \leq C \left( \delta_{\rm HLS}[\rho]\right)^{(1-\epsilon)/20} $$
for some $\mu>0$, provided $\delta_{\rm HLS}[\rho]$ is sufficiently small.
\end{thm}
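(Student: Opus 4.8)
\emph{Proof plan.}  The strategy is to use the fast diffusion equation \eqref{fdA} (with parameters $\kappa,M$) as a regularizing device: run it from $\rho$, let it carry $\rho$ toward the manifold $\{\sigma_{\nu,M}:\nu>0\}$ of Log-HLS optimizers, and use the smallness of $\delta_{\rm HLS}[\rho]$ to control how far it has to travel. Write $\sigma(t)$ for the solution of \eqref{fdA} with $\sigma(0)=\rho$; since $\F[\rho]=B_\F<\infty$ and $\H_{\kappa,M}[\rho]=B_\H<\infty$, it is well defined. Integrating the dissipation identity \eqref{disslem} and using the sharp Log-HLS inequality $\F[\sigma(t)]\ge C(M)$,
\[
\tfrac{8\pi}{M}\int_0^\infty\D[\sigma(t)]\,\dd t=\F[\rho]-\lim_{t\to\infty}\F[\sigma(t)]\le\F[\rho]-C(M)=\delta_{\rm HLS}[\rho],
\]
(the limit exists because $t\mapsto\F[\sigma(t)]$ is nonincreasing and $\ge C(M)$). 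Hence for every $\tau\in(0,1]$ there is $t_\tau\in[\tau/2,\tau]$ with $\D[\sigma(t_\tau)]\le (C/\tau)\,\delta_{\rm HLS}[\rho]$, with $C$ depending only on $M$.

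Next I would check that at such a time the solution is regular enough to feed into Theorem~\ref{4thstab}, with all constants depending only on $M,\kappa,B_\H,B_\F,B_\D$ and on $\tau$. Parabolic smoothing for \eqref{fdA} (the confining drift only helps) gives $\|\sigma(t_\tau)\|_\infty\le C\,t_\tau^{-\alpha}$ for universal $\alpha$ and $C=C(M)$, hence the entropy bound $S[\sigma(t_\tau)]\le A_\tau:=C(M)\big(1+\log(1/\tau)\big)$; monotonicity of $\H_{\kappa,M}$ along the flow gives $\H_{\kappa,M}[\sigma(t_\tau)]\le B_\H$, hence $\int_{\R^2}|x|^2\sigma(t_\tau)\,\dd x\le C(M,\kappa,B_\H)$ and therefore $N_p[\sigma(t_\tau)]\le B'=B'(M,\kappa,B_\H)$ for all $p\in(1,2)$; combining this second moment with the mass $M$ gives $\|\sigma(t_\tau)\|_{3/2}\ge c_0=c_0(M,\kappa,B_\H)>0$; and translating $\rho$ at the outset so that $\int_{\R^2}y\,\rho\,\dd y=0$ — which changes $\H_{\kappa,M}[\rho]$ by at most a quantity controlled by $B_\H,M,\kappa$ and leaves $\F,\D$ untouched — makes \eqref{fdA} preserve $\int_{\R^2}y\,\sigma(t)\,\dd y=0$, so \eqref{condsZZ} holds at $t_\tau$. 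After rescaling $\sigma(t_\tau)$ to have mass $\pi$ (which only rescales $\delta_{{\rm GNS}}$, $\D$, $S$, $N_p$ by fixed powers of $\pi/M$), the identity \eqref{pass} and the bound $\|\sigma(t_\tau)\|_{3/2}\ge c_0$ give $\delta_{{\rm GNS}}[\sigma(t_\tau)^{1/4}]\le C(M,\kappa,B_\H)\,\D[\sigma(t_\tau)]\le (C/\tau)\,\delta_{\rm HLS}[\rho]$; so, provided the right side is below the threshold $\delta_2(A_\tau,B')$ of Theorem~\ref{4thstab}, that theorem yields some $\mu>0$ in a range fixed by $A_\tau,B'$ with
\[
\|\sigma(t_\tau)-\sigma_{\mu,M}\|_1\le K_2(A_\tau,B')\big((C/\tau)\,\delta_{\rm HLS}[\rho]\big)^{(p-1)/(4p)}.
\]

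The remaining point, which I expect to be the crux, is that the theorem demands closeness of $\rho=\sigma(0)$ itself, not of $\sigma(t_\tau)$; so one must take $\tau$ small — ultimately a fixed small power of $\delta_{\rm HLS}[\rho]$ — and pay for the displacement $\|\sigma(t_\tau)-\rho\|_1$. Here one uses that $\D[\rho]=B_\D<\infty$ forces $\rho^{1/4}\in W^{1,2}(\R^2)$, hence $\rho\in L^q(\R^2)$ for all $q\in[1,\infty)$ (by the GNS inequality \eqref{ourgns}), which is enough regularity to produce a quantitative $L^1$-modulus of continuity at $t=0$ for \eqref{fdA}: $\|\sigma(\tau)-\rho\|_1\le\omega(\tau)$ with $\omega(\tau)\downarrow0$ and $\omega$ depending only on $M,\kappa,B_\H,B_\F,B_\D$. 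Combining, $\|\rho-\sigma_{\mu,M}\|_1\le\omega(\tau)+K_2(A_\tau,B')\big((C/\tau)\,\delta_{\rm HLS}[\rho]\big)^{(p-1)/(4p)}$. One then optimizes in $\tau$: the constraint $(C/\tau)\,\delta_{\rm HLS}[\rho]\le\delta_2(A_\tau,B')$ forces $\tau$ to be no smaller than a fixed power of $\delta_{\rm HLS}[\rho]$ (since $A_\tau$ grows only logarithmically in $1/\tau$, the factors $K_2,\delta_2$ cost only logarithms), and choosing $\tau$ equal to that power balances the two terms. Letting finally $p\uparrow2$, so that $(p-1)/(4p)\uparrow\tfrac18$, the resulting exponent of $\delta_{\rm HLS}[\rho]$ can be made $\ge(1-\epsilon)/20$ for any $\epsilon>0$, which is the assertion (for that same $\mu$). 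The almost-Lipschitz $L^1$-continuity of $\F$ from Theorem~\ref{loghlsfunccont} can also be brought in here to reorganize or sharpen this last balancing.

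Everything upstream of the transfer step — the dissipation identity \eqref{disslem}, the $L^1$–$L^\infty$ smoothing, the moment control coming from $\H_{\kappa,M}$, and the algebraic passage \eqref{pass} from $\D$ to $\delta_{{\rm GNS}}$ — is routine. The genuine difficulty is producing the quantitative $t=0$ continuity of \eqref{fdA} with a modulus depending only on the three a priori bounds, together with the careful bookkeeping of how the constants of Theorem~\ref{4thstab} deteriorate through the time-dependent entropy bound $A_\tau$; it is precisely the competition between these two effects that fixes the exponent $1/20$.
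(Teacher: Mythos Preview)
Your plan matches the paper's architecture: run the fast diffusion flow from $\rho$, use the dissipation relation to locate $t\in(0,T)$ with $\D[\sigma(\cdot,t)]\le\delta_{\rm HLS}[\rho]/T$, apply Theorem~\ref{4thstab} at that time, and close via the triangle inequality and an optimization in $T$. Two remarks.

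First, a simplification you miss. You obtain the entropy bound at time $t_\tau$ by parabolic $L^1\!\to\!L^\infty$ smoothing, producing $A_\tau\sim\log(1/\tau)$, and then worry about how the constants $K_2,\delta_2$ of Theorem~\ref{4thstab} deteriorate. The paper sidesteps this: both $\F[\sigma(\cdot,t)]$ and $\H_{\kappa,M}[\sigma(\cdot,t)]$ are nonincreasing along the flow, so Theorem~\ref{both} furnishes a $\tau$-\emph{independent} entropy bound, and the constants in Theorem~\ref{4thstab} are fixed once and for all. (A slip: $\H_{\kappa,M}[\rho]<\infty$ does \emph{not} yield a second-moment bound---the steady states $\sigma_{\kappa,M}$ themselves have infinite second moment---only $p$th moments for $p<2$, via Theorem~\ref{mothm}. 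But that is all you actually use.)

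Second, the part you correctly flag as the crux---the modulus $\omega(\tau)$ for $\|\sigma(\cdot,\tau)-\rho\|_1$---is left unsupplied, and without it the exponent $1/20$ cannot be derived. The paper's mechanism is: (i)~the $W_2$--gradient-flow structure of~\eqref{fdA} for $\H_{\kappa,M}$ gives $W_2(\rho,\sigma(\cdot,t))\le(\H_{\kappa,M}[\rho]\,t)^{1/2}$; (ii)~an interpolation bound (Theorem~\ref{inerpbnd}) converts $W_2$ to $L^2$, at the price of controlling $\|\nabla\rho^{1/4}\|_2$ and $\|\nabla\sigma^{1/4}(\cdot,t)\|_2$---and this is exactly where the hypothesis $B_\D<\infty$ enters, through Theorem~\ref{both2}, combined with the bound on $\D[\sigma(\cdot,t)]$ already obtained; (iii)~the $L^2$ bound plus the $p$th-moment bound yields an $L^1$ bound by splitting $\int_{|x|\le R}+\int_{|x|>R}$ and optimizing in $R$. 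The outcome is $\omega(T)\lesssim T^{p(1-\epsilon)/8(p+1)}$, which for $p\uparrow2$ is $T^{(1-\epsilon)/12}$; balancing this against $(\delta_{\rm HLS}[\rho]/T)^{(p-1)/(4p)}\to(\delta_{\rm HLS}[\rho]/T)^{1/8}$ via $T=\delta_{\rm HLS}[\rho]^{s/(r+s)}$ with $r=\tfrac{1}{12}$, $s=\tfrac{1}{8}$ gives the final exponent $rs/(r+s)=\tfrac{1}{20}$. Theorem~\ref{loghlsfunccont} plays no role here; it is used only later, in the proof of Theorem~\ref{thm:KS}.
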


By (\ref{exfredec}),  $\delta_{\rm HLS}[\rho(\cdot,t)]$ is decreasing to zero at a rate of (essentially) $t^{-1/8}$. 
Then, by Theorem \ref{loghlsstab}, there exists some $\mu(t)$ such that
$$\| \rho(\cdot,t)- \sigma_{\mu(t),8\pi}\|_1$$
converges to zero at a rate of essentially 
$t^{-1/160}$. 
Finally, a simple argument using the sensitive dependence of $\H_{\kappa,8\pi}$ on tails allows us to show that $\mu(t)$ converges at a logarithmic rate to $\kappa$.

It is interesting that the approach to equilibrium described by these quantitative bounds takes place on two separate time scales: The solution
 approaches the one-parameter family of (centered) stationary states with at least a polynomial rate.  Then, perhaps  much more gradually, 
 at only a logarithmic rate, the solution adjusts its spatial scale
to finally converge to the unique stationary solution within its basis of attraction.  It is reasonable to expect such behavior:  The initial data may, for example, be exactly equal to $\sigma_{\kappa,8\pi}$ on the complement of a ball of very large radius $R$, and yet may ``look much more like''
$\sigma_{\mu,8\pi}$ on a ball of smaller radius for some $\mu\neq \kappa$. One can then expect the solution to first approach 
$\sigma_{\mu,8\pi}$, and then only slowly begin to feel its distant tails and make the necessary adjustments to the spatial scale. 

The precise statement of our results on the rates of convergence for the critical mass Keller-Segel equation is given in  Theorem \ref{thm:KS} below.

We close this introduction by remarking that the key to the proof of  Theorem \ref{loghlsstab} is (\ref{disslem}), which, upon integration, yields
an expression for the Log-HLS deficit that can be related to the GNS deficit studied
in Section \ref{sect:GNS}.

\section{Stability results for GNS inequalities}
\label{sect:GNS}

In the forth-coming book \cite{BGL} the authors  present a very elegant argument
to deduce the  family of sharp Gagliardo-Nirenberg inequalities (\ref{gnsdd})
as a simple corollary of the sharp Sobolev inequality (\ref{sharpsob}). The argument has been known for some time
in certain circles, and is referred to as a result of D. Bakry in the third part of the remark following \cite[Theorem 4]{CNV}.
We are grateful to D.  Bakry  for communicating this proof to us, and for providing us with a draft of the relevant chapter of \cite{BGL}.

Here, starting from this proof and combining it with the quantitative stability result (\ref{bestab}) 
of Bianchi and Egnell, we deduce several 
stability results for the Gagliardo-Nirenberg-Sobolev  inequality (\ref{ourgns})  of that family.

Although much of the argument below could be carried out for this whole family (modulo
being able to extend the argument of Bianchi-Egnell to a slightly more general situation),
we prefer to consider only the  one particular GNS inequality which is important for the applications we consider here. 
In this way we also avoid the risk of making the paper excessively involved and hiding the main ideas.

\subsection{From Sobolev to GNS}
We begin by explaining the argument of \cite{BGL} specialized to our particular case of interest.

\if false
The following GNS inequality was been recently proved in \cite{DD}:
\begin{equation}\label{rob6b}
 \int_{\R^2} |\nabla u|^2\dd y\int_{\R^2} u^4 \dd y \geq 
\pi \int_{\R^2} u^6\dd y 
\end{equation}
with equality if and only if $u(x) = cv(\sqrt{\mu}(y-y_0))$
for some $c\in \R$, $\mu>0$ and $y_0\in \R^2$, where
\begin{equation}\label{def v}
v(y) := \frac{1}{\sqrt{1+|y|^2}}\ .
\end{equation}
In order to fix the parameters $c$ and $\mu$ to be both equal to $1$,
in the sequel we will work with functions $u \in W^{1,2}(\R^2)$ such that
\begin{equation}
\label{eq:hyp GN u}
\|u\|_6=\|v\|_6=\frac{\pi}{2}\ ,\qquad \sqrt{2}\|\nabla u\|_2 = \|u\|_4^2\ ,
\end{equation}
so that $v$ will be the only minimizer in this class.
\fi

The four-dimensional version of the sharp Sobolev inequality (\ref{sharpsob}) has the explicit form
\begin{equation}\label{s4d}
\|f\|_4^2 \leq \frac{1}{4\pi}\sqrt{\frac{3}{2}}\|\nabla f\|_2^2\ ,
\end{equation}
and equality holds if $f=g$, where 
\begin{equation}\label{rob1}
g(x,y) :=  \frac{1}{1+ |y|^2+|x|^2}\,\qquad x,y \in \R^2\  .
\end{equation}
The key observation, which is at the core of the proof of the next result,
is that $g$ can be written as
$$
g(x,y) =   \frac{1}{G(y)+|x|^2}\,\qquad \text{with}\quad G(y) := v^{-2}(y) =   1+|y|^2.
$$
The following result, which is a particular case of the results in \cite[Chapter 7]{BGL}, relates \eqref{ourgns} and \eqref{s4d}.
\begin{prop}
\label{prop:Sob GN}
Let $u \in W^{1,2}(\R^2)$ be a non-negative function satisfying 
\begin{equation}
\label{eq:hyp GN u}
\|u\|_6=\|v\|_6=\frac{\pi}{2}\ ,\qquad \sqrt{2}\|\nabla u\|_2 = \|u\|_4^2\ ,
\end{equation}
 and define
$f:\R^4 \to \R$ as
$$
f(x,y):=\frac{1}{F(y)+|x|^2},\qquad F(y) := u^{-2}(y), \qquad x,y \in \R^2.
$$
Then
\begin{equation}\label{robG1}
\delta_{{\rm GNS}}[u] = \left( \int_{\R^2} |\nabla u|^2\dd y\right)^{1/2}\left(\int_{\R^2} u^4 \dd y \right)^{1/2} - 
\left(\pi \int_{\R^2} u^6\dd y \right)^{1/2} =
\sqrt{3}\left( \frac{1}{4\pi}\sqrt{\frac{3}{2}}\|\nabla f\|_2^2 - \|f\|_4^2\right)\,.
\end{equation}
\end{prop}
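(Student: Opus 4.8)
The plan is to verify the identity \eqref{robG1} by a direct computation, reducing the four-dimensional integrals $\|\nabla f\|_2^2$ and $\|f\|_4^2$ to the two-dimensional integrals appearing in $\delta_{\rm GNS}[u]$. Write $f(x,y) = (F(y)+|x|^2)^{-1}$ with $F = u^{-2}$. First I would compute the $x$-integrals: for fixed $y$, using polar coordinates in $x \in \R^2$,
\[
\int_{\R^2} f(x,y)^4\,\dd x = \int_{\R^2}\frac{\dd x}{(F(y)+|x|^2)^4} = \frac{\pi}{3}F(y)^{-3} = \frac{\pi}{3}u^6(y),
\]
and similarly $\int_{\R^2}(F(y)+|x|^2)^{-3}\,\dd x = \frac{\pi}{2}F(y)^{-2} = \frac{\pi}{2}u^4(y)$ and $\int_{\R^2}(F(y)+|x|^2)^{-2}\,\dd x = \pi F(y)^{-1} = \pi u^2(y)$. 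These elementary one-variable integrals (after the radial substitution $s = |x|^2$) are the routine part.

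Next I would compute $\|\nabla f\|_2^2 = \int_{\R^4}(|\nabla_x f|^2 + |\nabla_y f|^2)\,\dd x\,\dd y$. We have $\nabla_x f = -2x\,(F+|x|^2)^{-2}$, so $|\nabla_x f|^2 = 4|x|^2(F+|x|^2)^{-4}$, and $\nabla_y f = -\nabla F(y)\,(F+|x|^2)^{-2}$, so $|\nabla_y f|^2 = |\nabla F|^2(F+|x|^2)^{-4}$. Here $\nabla F = -2u^{-3}\nabla u$, hence $|\nabla F|^2 = 4u^{-6}|\nabla u|^2$. Doing the $x$-integrals as above — one needs $\int_{\R^2}|x|^2(F+|x|^2)^{-4}\,\dd x = \frac{\pi}{6}F^{-2}$ and $\int_{\R^2}(F+|x|^2)^{-4}\,\dd x = \frac{\pi}{3}F^{-3}$ — gives
\[
\|\nabla f\|_2^2 = \frac{\pi}{6}\int_{\R^2}F(y)^{-2}\,\dd y + \frac{\pi}{3}\int_{\R^2}F(y)^{-3}|\nabla F(y)|^2\,\dd y = \frac{\pi}{6}\int_{\R^2}u^4\,\dd y + \frac{4\pi}{3}\int_{\R^2}|\nabla u|^2\,\dd y.
\]
Then $\|f\|_4^2 = \left(\frac{\pi}{3}\int u^6\right)^{1/2}$.

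Now I would substitute into the right-hand side of \eqref{robG1} and use the normalization \eqref{eq:hyp GN u}. Plugging in,
\[
\frac{1}{4\pi}\sqrt{\frac{3}{2}}\,\|\nabla f\|_2^2 = \frac{1}{4\pi}\sqrt{\frac{3}{2}}\left(\frac{\pi}{6}\|u\|_4^4 + \frac{4\pi}{3}\|\nabla u\|_2^2\right) = \frac{1}{\sqrt 6}\left(\frac{1}{24}\|u\|_4^4 + \frac{1}{3}\|\nabla u\|_2^2\right)\cdot\text{(const)},
\]
and here is where the constraint $\sqrt{2}\|\nabla u\|_2 = \|u\|_4^2$ is essential: it makes the two terms combine into a perfect multiple of $\|u\|_4^2\|\nabla u\|_2$ (equivalently of $\|\nabla u\|_2^2$), so that after subtracting $\|f\|_4^2 = \left(\frac{\pi}{3}\|u\|_6^6\right)^{1/2}$ and multiplying by $\sqrt 3$, one recovers exactly $\|\nabla u\|_2\|u\|_4^2 - \sqrt{\pi}\,\|u\|_6^3 = \delta_{\rm GNS}[u]$ (using $\|u\|_4^2 = \sqrt 2\|\nabla u\|_2$ to rewrite $\|u\|_4^2\|u\|_4^2 = \sqrt2 \|\nabla u\|_2 \|u\|_4^2$). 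The only mildly delicate point is bookkeeping the numerical constants $\tfrac{1}{4\pi}\sqrt{3/2}$, $\pi$, and $\sqrt3$ so that everything cancels; there is no analytic obstacle, since all integrals converge for $u$ a nonnegative $W^{1,2}$ function with the stated normalization (the decay of $v$-type profiles guarantees integrability, and for general admissible $u$ both sides may be $+\infty$ simultaneously, in which case the identity still holds). The main thing to get right is therefore simply the algebra of constants and the role of the second normalization in \eqref{eq:hyp GN u} in forcing the two pieces of $\|\nabla f\|_2^2$ to assemble correctly.
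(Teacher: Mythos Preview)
Your approach is exactly the paper's: integrate out the $x$-variable to reduce $\|\nabla f\|_2^2$ and $\|f\|_4^2$ to integrals of $u$, then use the second normalization in \eqref{eq:hyp GN u} to finish. However, there is a concrete arithmetic slip. You correctly record $|\nabla_x f|^2 = 4|x|^2(F+|x|^2)^{-4}$ and $\int_{\R^2}|x|^2(F+|x|^2)^{-4}\,\dd x = \tfrac{\pi}{6}F^{-2}$, but when you assemble $\|\nabla f\|_2^2$ you drop the factor $4$: the $x$-gradient contribution is $4\cdot\tfrac{\pi}{6}F^{-2} = \tfrac{2\pi}{3}F^{-2}$, not $\tfrac{\pi}{6}F^{-2}$. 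With this correction,
\[
\|\nabla f\|_2^2 \;=\; \frac{2\pi}{3}\int_{\R^2}u^4\,\dd y \;+\; \frac{4\pi}{3}\int_{\R^2}|\nabla u|^2\,\dd y,
\qquad
\frac{1}{4\pi}\sqrt{\tfrac{3}{2}}\,\|\nabla f\|_2^2 \;=\; \frac{1}{2\sqrt{6}}\bigl(\|u\|_4^4 + 2\|\nabla u\|_2^2\bigr).
\]
Because you stop at ``$\cdot\,(\text{const})$'' and never carry the algebra through, you do not notice that your coefficients, as written, would not close to give \eqref{robG1}.

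For the last step the paper uses the identity $A+B = 2\sqrt{AB} + (\sqrt{A}-\sqrt{B})^2$ with $A=2\|\nabla u\|_2^2$, $B=\|u\|_4^4$: the constraint $\sqrt{2}\|\nabla u\|_2 = \|u\|_4^2$ kills exactly the square $(\sqrt{2}\|\nabla u\|_2 - \|u\|_4^2)^2$, leaving $\tfrac{1}{2\sqrt{6}}\cdot 2\sqrt{2}\,\|\nabla u\|_2\|u\|_4^2 = \tfrac{1}{\sqrt{3}}\|\nabla u\|_2\|u\|_4^2$, and multiplying by $\sqrt{3}$ and subtracting $\sqrt{3}\,\|f\|_4^2 = \sqrt{\pi}\,\|u\|_6^3$ yields $\delta_{\rm GNS}[u]$. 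Your suggested direct substitution $\|u\|_4^4 = \sqrt{2}\,\|\nabla u\|_2\|u\|_4^2$ and $2\|\nabla u\|_2^2 = \sqrt{2}\,\|\nabla u\|_2\|u\|_4^2$ is equivalent and perfectly fine once the missing factor of $4$ is restored.
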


\begin{remark}
Observe that, given $u \in W^{1,2}(\R^2)$ with $u \not\equiv 0$, we can always multiply it by a constant 
so that $\|u\|_6=\|v\|_6$, and then scale it as
$\mu^{1/3}u(\mu y)$ choosing  $\mu$ to ensure that
$\sqrt{2}\|\nabla u\|_2 = \|u\|_4^2$. Since \eqref{ourgns}
is invariant under this scaling, this proves (\ref{ourgns}). This is the use of the identity (\ref{robG1}) made in \cite{BGL}.
Our interest in this proposition is that it relates the GNS deficit to the Sobolev deficit. 
\end{remark}

\begin{proof}
We compute
\begin{eqnarray}
\|\nabla f\|_2^2 &=& \int_{\R^2}\left(  \int_{\R^2}\frac{|\nabla F(y)|^2}{(F(y)+|x|^2)^4}\dd x\right)\dd y + 
\int_{\R^2}\left(  \int_{\R^2}\frac{4|x|^2}{(F(y)+|x|^2)^4}\dd x\right)\dd y \nonumber\\
&=&\frac{\pi}{3} \int_{\R^2}|\nabla F(y)|^2 F^{-3}(y)\dd y +  
\frac{2\pi}{3} \int_{\R^2} F^{-2}(y)\dd y\ \nonumber
\end{eqnarray}
and
\begin{equation}
\label{eq:norm f 4}
\|f\|_4^2  =\left(\frac{\pi}{3} \int_{\R^2} F^{-3}(y)\dd y \right)^{1/2}\ .
\end{equation}
Thus
$$0 \leq \frac{1}{4\pi}\sqrt{\frac{3}{2}}\|\nabla f\|_2^2 - \|f\|_4^2 = \frac{1}{2\sqrt{6}}\left(2\int_{\R^2} |\nabla u|^2\dd y + \int_{\R^2} u^4 \dd y \right) - 
\left(\frac{\pi}{3} \int_{\R^2} u^6\dd y \right)^{1/2}\ ,$$
or equivalently (using the identity $2\sqrt{AB} = A+B - (\sqrt{A} - \sqrt{B})^2$)
\begin{eqnarray*}
\left( \int_{\R^2} |\nabla u|^2\dd y\right)^{1/2}\left(\int_{\R^2} u^4 \dd y \right)^{1/2} - 
\left(\pi \int_{\R^2} u^6\dd y \right)^{1/2} &=& 
\sqrt{3}\left( \frac{1}{4\pi}\sqrt{\frac{3}{2}}\|\nabla f\|_2^2 - \|f\|_4^2\right) \\
 &-& \frac{1}{2\sqrt{2}}\left(\sqrt{2}\|\nabla u\|_2 - \|u\|_4^2\right)^2\,.
 \end{eqnarray*}
Recalling that  $\sqrt{2}\|\nabla u\|_2 = \|u\|_4^2$  by assumption, and recalling the definition (\ref{rob6A}) of the GNS deficit,
the proof is complete.
\end{proof}

\subsection{Controlling the infimum in the Bianchi-Egnell Theorem.}
The family of functions
$$
g_{c,\mu,x_0,y_0}(x,y):= \frac{c\mu}{1+\mu^2|x+x_0|^2+\mu^2|y+y_0|^2 }, \qquad c \in \R,\, \mu >0, \, x_0,y_0 \in \R^2.
$$
consists of all of the optimizers of the Sobolev inequality (\ref{s4d}).
Observe that, with this definition, $g=g_{1,1,0,0}$, where $g$ is the function defined in (\ref{rob1}). 

The Bianchi-Egnell stability result  \cite{BE} combined with the Sobolev inequality \eqref{s4d}
asserts the existence of a universal constant $C_0$ such that 
\begin{equation}\label{beex}
C_0\sqrt{3}\left( \frac{1}{4\pi}\sqrt{\frac{3}{2}}\|\nabla f\|_2^2 - \|f\|_4^2\right)  \geq
\inf_{c,\mu,x_0,y_0}\|f -  g_{c,\mu,x_0,y_0}\|_4^2\ .
\end{equation}
Hence, whenever $u$ satisfies the conditions (\ref{eq:hyp GN u}) of Proposition~\ref{prop:Sob GN}, 
\begin{equation}
\label{delta u f g}
C_0 \delta_{{\rm GNS}}[u] \geq \inf_{c,\mu,x_0,y_0}\|f -  g_{c,\mu,x_0,y_0}\|_4^2\ .
\end{equation}
Let us observe that the renormalization $\|u\|_6=\|v\|_6$ is equivalent to $\|f\|_4=\|g\|_4$.

Our main goal in this subsection is to show first that, up to enlarging the constant $C_0$, we can assume that $c=\mu=1$ and $x_0=0$
(see Lemma \ref{lem: c mu x0} below).  This paves the way for the estimation of  the infimum on the right hand side of (\ref{delta u f g})
in terms of $u$ and $v$.

\if false
Then we will prove that $\|f- g_{1,1,0,y_0}\|_4$
controls $\|u^6- v^6 (\cdot -y_0)\|_1$ (Lemma \ref{lem: fg uv} and Theorem
\ref{thm:stability 6}), and finally,
assuming some controls on the moment, we will be able to kill the translation $y_0$
(Proposition \ref{prop:translation}).
\fi

 \begin{lm}
\label{lem: c mu x0}
Let $f$ be given by
$f(x,y)=1/(F(y)+|x|^2)$, with $F : \R^2 \to \R$ non-negative, and 
 $g$ be given by (\ref{rob1}).
Suppose that $\|f\|_4 = \|g\|_4$. Then there is a universal constant $C_1$ so that, for all real
numbers $\delta>0$ with
\begin{equation}\label{robu45}
\delta^{1/2} \leq \frac{1}{2400}\ ,
\end{equation}
 whenever
$$\|f -  g_{c,\mu,x_0,y_0}\|_4  \leq \delta^{1/2} \qquad{\rm for\ some}\qquad c, \mu, x_0, y_0\ ,$$
then
$$
\|f- g_{1,1,0,y_0}\|_4 \leq C_1\delta^{1/2}\ .
$$
\end{lm}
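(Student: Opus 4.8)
The strategy is to exploit the constraint $\|f\|_4 = \|g\|_4$ together with the fact that $f$ has a very rigid structure (it is radial in the $x$-variable and depends on $y$ only through $F(y)$) to pin down the free parameters $c,\mu,x_0$. The starting hypothesis gives $\|f - g_{c,\mu,x_0,y_0}\|_4 \le \delta^{1/2}$ for some choice of parameters. First I would use the triangle inequality and the normalization to get $\big|\,\|g_{c,\mu,x_0,y_0}\|_4 - \|g\|_4\,\big| \le \delta^{1/2}$, and since $\|g_{c,\mu,x_0,y_0}\|_4 = |c|\,\|g\|_4$ (the dilation/translation parameters do not change the $L^4$ norm, only $c$ scales it), this forces $\big|\,|c| - 1\,\big| \le \delta^{1/2}/\|g\|_4 = C\delta^{1/2}$. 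Moreover $c > 0$ must hold once $\delta^{1/2}$ is small, because $f \ge 0$ and a negative-$c$ bump cannot be $L^4$-close to a nonnegative function of comparable mass. So $c$ is controlled; replacing $g_{c,\mu,x_0,y_0}$ by $g_{1,\mu,x_0,y_0}$ costs only another $C\delta^{1/2}$ in the $L^4$ distance.

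Next I would control $\mu$ and $x_0$. Here the idea is that $f$ is symmetric under $x \mapsto -x$ (it depends only on $|x|^2$), whereas $g_{1,\mu,x_0,y_0}$ is symmetric under $x \mapsto -x$ only when $x_0 = 0$. Concretely, let $R$ denote the reflection $(x,y)\mapsto(-x,y)$; then $Rf = f$, so
$$\|f - g_{1,\mu,0,y_0^\sharp}\|_4 \le \tfrac12\big(\|f - g_{1,\mu,x_0,y_0}\|_4 + \|f - R g_{1,\mu,x_0,y_0}\|_4\big) + (\text{something})$$
— more precisely I would average $g_{1,\mu,x_0,y_0}$ and its reflection $Rg_{1,\mu,x_0,y_0}$ and observe that both are within $C\delta^{1/2}$ of $f$, so they are within $2C\delta^{1/2}$ of each other in $L^4$; then a quantitative computation shows that two Sobolev optimizers of equal height whose centers are reflections of one another across $\{x=0\}$ can be $L^4$-close only if the $x$-component of the center is small relative to $\mu^{-1}$, i.e. $\mu|x_0| \le C\delta^{1/2}$. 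This lets me replace $x_0$ by $0$ at the cost of $C\delta^{1/2}$. Finally, for $\mu$: once $c=1$ and $x_0 = 0$, I would compare $\|f\|_4 = \|g\|_4$ with $\|g_{1,\mu,0,y_0}\|_4 = \|g\|_4$ — these are automatically equal, so this does not directly fix $\mu$; instead I would use the triangle inequality $\|g_{1,1,0,y_0} - g_{1,\mu,0,y_0}\|_4 \le \|f-g_{1,1,0,y_0}\|_4 + \|f - g_{1,\mu,0,y_0}\|_4$ is not yet available. The cleaner route is to test against a fixed profile: evaluate $\int f^3 \cdot (\text{bump centered at the }x\text{-origin})$ or compare the $L^4$ masses of $f$ and $g_{1,\mu,0,y_0}$ on the slab $\{|x|\le 1\}$; since $f$'s restriction in $x$ is dictated (for each $y$) by $F(y)$ and $F$ is pinned near $v^{-2}$ by the closeness to $g = g_{1,1,0,0}$, a comparison of, say, $\int_{\R^4} f^3 \,dx\,dy$ against $\int_{\R^4} g_{1,\mu,0,y_0}^3\,dx\,dy$ — both computable in closed form, the latter scaling like a definite power of $\mu$ — forces $|\mu - 1| \le C\delta^{1/2}$, and then $\|g_{1,\mu,0,y_0} - g_{1,1,0,y_0}\|_4 \le C|\mu-1| \le C\delta^{1/2}$ by an explicit Lipschitz-in-$\mu$ estimate on the optimizer family.

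Assembling the pieces by the triangle inequality gives $\|f - g_{1,1,0,y_0}\|_4 \le C_1\delta^{1/2}$. The smallness condition $\delta^{1/2}\le 1/2400$ is exactly what one needs to keep all the ``$c>0$ and $\mu$ bounded away from $0$ and $\infty$'' arguments on the right side of the relevant thresholds, so that the various $C\delta^{1/2}$ error terms can be summed without the constants blowing up. The main obstacle I anticipate is the $\mu$-step: unlike $c$ (fixed by the $L^4$ norm) and $x_0$ (fixed by reflection symmetry in $x$), the dilation parameter is not pinned by any symmetry, so controlling it requires genuinely using the rigidity of the Ansatz $f(x,y) = 1/(F(y)+|x|^2)$ — i.e. that $f$ cannot look like a rescaled Sobolev bubble in the $x$-direction unless its $y$-profile $F$ conspires, which the normalization $\|f\|_4=\|g\|_4$ and proximity to $g$ prevent. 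Making this quantitative with explicit constants is the technical heart of the lemma.
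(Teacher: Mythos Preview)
Your handling of $c$ and $x_0$ matches the paper's: the identity $\|g_{c,\mu,x_0,y_0}\|_4 = |c|\,\|g\|_4$ pins $c$, and evenness of $f$ in $x$ plus the triangle inequality handles $x_0$. (The paper avoids estimating $|x_0|$ itself: since $a\mapsto \|g_{1,\mu,ax_0,y_0}-g_{1,\mu,0,y_0}\|_4$ is monotone in $a>0$, the bound on the $2x_0$-difference immediately gives the same bound on the $x_0$-difference, and one more triangle inequality finishes.)

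The gap is the $\mu$ step. Your proposal to compare $\int f^3$ with $\int g_{1,\mu,0,y_0}^3 = \mu^{-1}\|g\|_3^3$ does not work as stated: one computes $\int_{\R^4} f^3\,dx\,dy = C\int_{\R^2}F(y)^{-2}\,dy$, and this need not be finite under the only integrability you have, namely $\|f\|_4<\infty$, i.e.\ $\int F^{-3}\,dy<\infty$ (take $F(y)\sim |y|$ at infinity). The ``slab'' alternative is too vague to assess. The paper's argument is different and genuinely exploits the explicit form of $f$. After rescaling, $\|f-g_{1,\mu,0,y_0}\|_4\le 6\delta^{1/2}$ becomes an $L^4$ bound on $\frac{1}{\mu(F(y/\mu)+|x|^2/\mu^2)}-\frac{1}{1+|y-\mu y_0|^2+|x|^2}$; by Chebyshev this is $\le 12\delta^{1/2}$ pointwise on a large subset $B$ of a fixed cube, where both functions are bounded below, so the reciprocals are close:
\[
\bigl|\,(1+|y-\mu y_0|^2 + |x|^2) - (\mu F(y/\mu) + |x|^2/\mu)\,\bigr| \le 144\,\delta^{1/2}\quad\text{on }B\ .
\]
Here the coefficient of $|x|^2$ is exactly $1-1/\mu$, and everything else depends only on $y$. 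Choosing via Fubini a single $\bar y$ for which $B$ contains points with both $|x_1|<1/4$ and $|x_2|>3/4$, and subtracting the two instances, isolates $(|x_2|^2-|x_1|^2)(1-1/\mu)$ and forces $|\mu-1|\le C\delta^{1/2}$. The structural fact being used is that in $f=1/(F(y)+|x|^2)$ the $|x|^2$ coefficient is \emph{identically} $1$; the two-point comparison reads it off.
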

As can be seen from the proof, a possible choice for $C_1$ is 
$4800$.

\begin{proof} 
Suppose that   $\|f -  g_{c,\mu,x_0,y_0}\|_4 < \delta^{1/2}$ for some $\delta^{1/2}>0$ satisfying \eqref{robu45}.

\textit{$\bullet$ Step 1: we can assume $c=1$.}
First of all notice that $c \geq 0$, as otherwise
$$
\delta^2 \geq \int_{\R^4}|f-g_{c,\mu,x_0,y_0}|^4 \dd x \dd y\geq
\int_{\R^4}\left(|f|^4+|g_{c,\mu,x_0,y_0}|^4\right)\dd x \dd y \geq \|f\|_4^4=\|g\|_4^4=\frac{\pi^2}{6}\,,
$$ 
which is in contradiction with \eqref{robu45}.

Now, for any $c,\mu>0$ and $x_0,y_0 \in \R^2$,
$\|g_{c,\mu,x_0,y_0}\|_4 = c\|g\|_4 = c\|f\|_4$.  Hence, 
$$|c-1| \|g\|_4= |\|g_{c,\mu,x_0,y_0}\|_4 - \|f\|_4| \leq  \|f -  g_{c,\mu,x_0,y_0}\|_4 < \delta^{1/2}\ ,$$
and by the triangle inequality we get
\begin{eqnarray}
\label{eq:c 1}
\|f -  g_{1,\mu,x_0,y_0}\|_4 &\leq&  \|f -  g_{c,\mu,x_0,y_0}\|_4 +  \|g_{1,\mu,x_0,y_0}-  g_{c,\mu,x_0,y_0}\|_4\nonumber\\
&=&  \|f -  g_{c,\mu,x_0,y_0}\|_4+|c-1| \|g\|_4\nonumber\\
&\leq& 2\,\delta^{1/2}\ .
\end{eqnarray}
Thus, up to enlarging the constant, we may replace $c$ by $1$.

\textit{$\bullet$ Step 2: we can assume $x_0=0$.}
Observe that, by construction, $f$ is even in $x$. Therefore \eqref{eq:c 1} implies
$$ \|f -  g_{1,\mu,x_0,y_0}\|_4 = \|f -  g_{1,\mu,-x_0,y_0}\|_4 \leq 2\,\delta^{1/2} \ ,$$
and by the triangle inequality,
$$ \|g_{1,\mu,2x_0,y_0} -g_{1,\mu,0,y_0} \|_4 =  \|g_{1,\mu,x_0,y_0} -g_{1,\mu,-x_0,y_0} \|_4 \leq  4\,\delta^{1/2}\ .$$
However, a simple argument using the unimodality and symmetry properties of $ g = g_{1,1,0,0}$ shows that
$$a \mapsto  \|g_{1,\mu,ax_0,y_0} -g_{1,\mu,0,y_0} \|_4$$
is increasing in $a>0$, thus
$$
\|g_{1,\mu,x_0,y_0} -g_{1,\mu,0,y_0} \|_4 \leq \|g_{1,\mu,2x_0,y_0} -g_{1,\mu,0,y_0} \|_4\leq  4\,\delta^{1/2}.
$$
One more use of the triangle inequality gives
 \begin{equation}\label{robu3}
 \|f - g_{1,\mu,0,y_0} \|_4 \leq 6\,\delta^{1/2}\ .
 \end{equation}
Hence, up to further enlarging the constant, we may replace $x_0$ by $0$. 

\textit{$\bullet$ Step 3: we can assume $\mu=1$.}
Making a change of scale, we can rewrite (\ref{robu3})  as
 \begin{equation}\label{eq:rescaled}
\left\Vert   \frac{1}{\mu} \frac{1}{F(y/\mu)+|x|^2/\mu^2} - \frac{1}{1+|y-\mu y_0|^2+|x|^2} \right\Vert _{4} \leq 6\,\delta^{1/2}
 \end{equation}
Let $A:=\{(x,y) \in \R^4\ : \ |x| \leq 1,\ |y-\mu y_0| \leq 1\}$.
Note that the Lebesgue measure of $A$ is $ \pi^2$.
Moreover, by a simple Fubini argument,
for any set $B\subset A$ with measure greater than $(15/16)\pi^2$ there exists
$\bar y \in \{y\ :\ |y-\mu y_0| \leq 1\}$ such that the set $B \cap \left(\R^2\times \{\bar y\}\right)$
must intersect both
$$A\cap \{(x,y)\ :\ |x| < 1/4\}\quad{\rm and}\quad 
A\cap \{(x,y)\ :\ |x| > 3/4\}\ .$$
(Indeed, if this was not the case, by Fubini Theorem the measure of $B$ would be smaller than $(15/16)\pi^2$.)

Now, applying Chebyshev's inequality, by \eqref{eq:rescaled}
we get the existence of a set $B \subset A$ of  measure at least $(31/32)\pi^2$
such that
\begin{equation}
\label{eq:ptwise bound}
\left| \frac{1}{\mu} \frac{1}{F(y/\mu)+|x|^2/\mu^2} - \frac{1}{1+|y-\mu y_0|^2+|x|^2}  \right|\leq
12\,\delta^{1/2} \qquad \forall\, (x,y) \in B
\end{equation}
(as the complement of the above set has measure less or equal than $1/16$, which is less than $\pi^2/32$).

Set $\alpha:=1+|y- \mu y_0|^2+|x|^2$ and $\b:=\mu \left(F(y/\mu)+|x|^2/\mu^2\right)$, so that
\eqref{eq:ptwise bound} becomes
\begin{equation}
\label{eq:ptwise bound 2}
\left| \frac{1}{\beta} - \frac{1}{\alpha}\right| \leq 12 \, \delta^{1/2} \qquad \forall\, (x,y) \in B.
\end{equation}
We observe that $\a \leq 3$ on $B$. Moreover, thanks to \eqref{robu45}, 
$$\frac{1}{\beta} \geq \frac{1}{\alpha} - \left| \frac{1}{\beta} - \frac{1}{\alpha}\right|
\geq \frac{1}{3} - 12 \, \delta^{1/2} \geq \frac{1}{4}\qquad \text{inside }B\ ,$$
that is $\b \leq 4$ on $B$.
Hence \eqref{eq:ptwise bound 2} gives
$$
|\a-\b| \leq 12\a \b\, \delta^{1/2}    \leq 144\,\delta^{1/2}\qquad \text{inside }B\ ,
$$
or equivalenly
$$
\left|1+|y-\mu y_0|^2+|x|^2 - \mu F(y/\mu)-|x|^2/\mu \right| \leq 144\,\delta^{1/2}
\qquad \forall\, (x,y) \in B.
$$
By the observation above,
we have chosen $B$ large enough that there exists $\bar y \in \{y \ : |y-\mu y_0| \leq 1\}$
such that $(x_1,\bar y),(x_2,\bar y)\in B$, with $|x_1| \in [0,1/4]$ and $|x_2| \in [3/4,1]$.
Then the above estimate gives
\begin{eqnarray*}
\frac{1}{2}\left|1 -\frac{1}\mu\right|&\leq& \left(|x_2|^2 - |x_1|^2 \right) \left|1 -\frac{1}\mu\right|\\
&\leq &  \left|1+|\bar y- \mu y_0|^2+|x_1|^2 - \mu F(\bar y/\mu)-|x_1|^2/\mu \right|\\
&&+\left|1+|\bar y-\mu y_0|^2+|x_2|^2 - \mu F(\bar y/\mu)-|x_2|^2/\mu \right|\\
&\leq & 288\, \delta^{1/2} \leq 300 \, \delta^{1/2}\, .
\end{eqnarray*}
Using \eqref{robu45} and the identity
$(\mu -1)(1-(1-1/\mu)) = (1-1/\mu)$, we easily deduce
\begin{equation}
\label{eq:bound mu}
|\mu-1| \leq 1200\delta^{1/2}\ .
\end{equation}
Since (as it is easy to check by a direct computation)
$$
|\partial_\mu g_{1,\mu,0,y_0}(x,y)| \leq \frac{|g_{1,\mu,0,y_0}(x,y)|}{\mu} \leq 2 |g_{1,\mu,0,y_0}(x,y)| \qquad \forall \,\mu\in [1/2,2]\,
$$
we get
$$
\|g_{1,\mu,0,y_0} - g_{1,1,0,y_0}\|_4 \leq 2|\mu-1|\|g\|_4 \qquad \forall \,\mu \in [1/2,2].
$$
Combining this with (\ref{robu3}) and \eqref{eq:bound mu}, we finally obtain
 \begin{equation}\label{robu35}
 \|f - g_{1,1,0,y_0} \|_4 \leq \left(6 + 2400\|g\|_4\right) \delta^{1/2} \leq 4800\, \delta^{1/2}\ ,
 \end{equation}
concluding the proof. 
\end{proof}

\subsection{Bounding $\|u^6- v^6\|_1$ in terms of $\|f - g\|_4$.}

Our goal in this subsection is  to bound $\|u^6-v^6(\cdot - y_0)\|_1$ from above in terms of  $\|f - g_{1,1,0,y_0}\|_4$.
\begin{lm}
\label{lem: fg uv}
Let $u \in W^{1,2}(\R^2)$ be a non-negative function satisfying \eqref{eq:hyp GN u}, and let $f$ be defined as in Proposition \ref{prop:Sob GN}.
Suppose that $\|f - g_{1,1,0,y_0}\|_4 \leq 1$. Then
$$\|u^6 - v^6(\cdot - y_0)\|_1 \leq  C_2 \|f-g_{1,1,0,y_0}\|_4\ $$
for some universal constant $C_2$.
\end{lm}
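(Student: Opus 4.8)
The plan is to translate the $L^4$-closeness of $f$ and $g_{1,1,0,y_0}$ back into information about $u$ and $v$ through the explicit formulas $f(x,y) = (F(y)+|x|^2)^{-1}$, $F = u^{-2}$, and $g_{1,1,0,y_0}(x,y) = (1+|y-y_0|^2+|x|^2)^{-1} = (G(y-y_0)+|x|^2)^{-1}$ with $G = v^{-2}$. For fixed $y$, integrating $|f-g|^4$ over $x \in \R^2$ and using $\int_{\R^2}(a+|x|^2)^{-k}\dd x = \pi/((k-1)a^{k-1})$ and similar one-dimensional-in-$|x|^2$ computations (exactly the kind already carried out in the proof of Proposition~\ref{prop:Sob GN}), one gets that $\int_{\R^2}|f(x,y)-g_{1,1,0,y_0}(x,y)|^4\dd x$ is comparable to a sum of terms of the form $\left|F(y)^{-j}-G(y-y_0)^{-j}\right|$ times rational functions of $F(y)$ and $G(y-y_0)$; the leading behavior should be controlled from below by something like $|F(y)^{-3} - G(y-y_0)^{-3}|$ divided by a suitable power. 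Since $F^{-3} = u^6$ and $G^{-3} = v^6$, this already suggests the bound we want, modulo controlling the denominators.

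Concretely, I would first reduce to $y_0 = 0$ by replacing $u$ by $u(\cdot + y_0)$ (the deficit and all the norms are translation invariant), so it suffices to bound $\|u^6 - v^6\|_1$ by $C_2\|f - g\|_4$. Next I would establish the pointwise lower bound: there is a constant $c>0$ and an exponent so that
\begin{equation*}
\int_{\R^2}|f(x,y) - g(x,y)|^4\,\dd x \;\geq\; c\,\frac{\left(u^6(y) - v^6(y)\right)^2}{\bigl(u^{-2}(y) + v^{-2}(y)\bigr)^{m}}\,
\end{equation*}
for an appropriate $m$ — or more robustly, a lower bound of the form $c\min\bigl\{|u^6(y)-v^6(y)|^{4/3}\,(\cdots)\,,\,\text{something}\bigr\}$ obtained by splitting into the regime where $F(y)$ and $G(y)$ are comparable and the regime where one dominates. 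In the balanced regime the difference $|1/\alpha - 1/\beta|$-type estimate from the proof of Lemma~\ref{lem: c mu x0} converts linearly into $|\alpha - \beta|$; in the unbalanced regime $|f-g|^4$ integrated in $x$ is bounded below by a fixed fraction of $\max\{u^6(y),v^6(y)\}$. Then integrating in $y$ and using $\int u^6 = \int v^6 = \pi/2$ (fixed mass, from \eqref{eq:hyp GN u}) together with Hölder/Cauchy–Schwarz — the mass normalization is what lets me pass from an $L^2$-type control with weights to the $L^1$ norm of $u^6 - v^6$ — should yield $\|u^6 - v^6\|_1 \leq C_2\|f-g\|_4$. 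The hypothesis $\|f-g\|_4 \leq 1$ is used to stay in a regime where these comparisons are uniform.

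The main obstacle I anticipate is handling the tails, i.e. the region where $u(y)$ is very small (equivalently $F(y)$ very large): there the integrand $|f-g|^4$ in $x$ decays and a naive pointwise-to-integral conversion loses too much, so the weights $\bigl(u^{-2}+v^{-2}\bigr)^{m}$ in the denominator blow up. The remedy is to exploit that $v^6(y) = (1+|y|^2)^{-3}$ is itself integrable with a known tail, so that the contribution of $\{|y|\geq R\}$ to $\|u^6 - v^6\|_1$ is automatically small once we also know $\int_{|y|\geq R}u^6$ is small; and $\int_{|y|\geq R}u^6$ being small follows because $\int u^6 = \pi/2$ is fixed and the bulk is controlled by $\|f-g\|_4$. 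So the argument will be: control the bulk $\{|y|\leq R\}$ by the weighted $L^2$ estimate (where weights are bounded), and control the tail by the mass constraint, optimizing over $R$. A secondary technical point is carrying out the $x$-integration cleanly; but since every such integral is an elementary Beta-function integral of the type $\int_0^\infty t(a+t^2)^{-k}\dd t$, already used above, this is routine bookkeeping rather than a real difficulty.
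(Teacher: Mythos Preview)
Your plan is essentially the paper's proof: reduce to $y_0=0$, expand $\|f-g\|_4^4$ as a double integral, integrate out $x$, and split $\{F<G\}$ (and symmetrically $\{F>G\}$) into the balanced set $A_1=\{G/2\le F<G\}$ and the unbalanced set $A_2=\{F<G/2\}$. On $A_1$ the paper obtains $\int_{\R^2}|f-g|^4\,\dd x\ge \tfrac{\pi}{7}|F-G|^4G^{-7}$ and passes to $\int_{A_1}|u^6-v^6|$ by H\"older against the finite mass $\int G^{-3}=\int v^6$, together with the pointwise bound $|u^6-v^6|\le 14\,|F-G|/G^4$ valid on $A_1$. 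On $A_2$ the paper gets $\int_{\R^2}|f-g|^4\,\dd x\ge c\,F^{-3}=c\,u^6\ge c\,|u^6-v^6|$ directly. Your balanced/unbalanced description and the use of the mass normalisation in H\"older match this exactly.

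Two points where your outline drifts from what is actually needed. First, the tail obstacle you anticipate (large $F$) does not arise and no cutoff in $R$ is required: in the balanced regime the H\"older weight is $G^{-3}=v^6$, which is globally integrable irrespective of the size of $F$, while in the unbalanced regime the bound is already pointwise in terms of $\max\{u^6,v^6\}$. Second, your first displayed pointwise inequality with $(u^6-v^6)^2/(u^{-2}+v^{-2})^m$ would not lead anywhere, since $(u^{-2}+v^{-2})^m$ is not integrable; the workable route is precisely the split you call ``more robust''. Finally, note the specific role of the hypothesis $\|f-g\|_4\le 1$: on $A_2$ one first gets $\|f-g\|_4^4\ge c\int_{A_2}|u^6-v^6|$, and because the left side is at most $1$ one may replace it by its fourth root, which is how the final estimate becomes linear in $\|f-g\|_4$ rather than quartic.
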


As can be seen for the proof, a possible choice for $C_2$ is $1000$.
We also remark that, by considering  $u$ of the form $v+\varepsilon \phi$ with $\varepsilon>0$ small, one sees that
the unit in the  above estimate is optimal.

\begin{proof}
Up to replace $u$ and $f$ by $u(\cdot - y_0)$ and $f(\cdot - y_0)$ respectively, we can assume that $y_0=0$.

We write
\begin{eqnarray}
\|f - g\|_4^4 &= &\int_{\R^2}\left(\int_{\R^2} \frac{|F- G|^4}{(F+|x|^2)^4(G+|x|^2)^4} \dd x\right) \dd y\nonumber\\
&\geq& \int_{\{F<G\}}\left(\int_{\R^2} \frac{|F- G|^4}{(F+|x|^2)^4(G+|x|^2)^4} \dd x\right) \dd y\nonumber\\
&&+\int_{\{F>G\}}\left(\int_{\R^2} \frac{|F- G|^4}{(F+|x|^2)^4(G+|x|^2)^4} \dd x\right) \dd y\nonumber.
\end{eqnarray}
By symmetry, it suffices to  estimate the first integral in the last expression.
We split $\{F<G\}=\{G/2\leq F<G\}\cup \{F<G/2\}=:A_1 \cup A_2$.

On $A_1$ we compute
\begin{eqnarray*}
\int_{A_1}\left(\int_{\R^2} \frac{|F- G|^4}{(F+|x|^2)^4(G+|x|^2)^4} \dd x\right) \dd y
&\geq& \int_{A_1}\left(\int_{\R^2}\frac{|F- G|^4}{(G+|x|^2)^8}\dd x\right) \dd y\\
&=& \frac{\pi}{7} \int_{A_1}\frac{|F- G|^4}{G^7} \dd y.
\end{eqnarray*}
Now, since $1/G^3=u^6 \in L^1$,  and since $\|u\|_6 = \|v\|_6=\pi/2 \leq 2$, H\"older's inequality yields
$$
 \int_{A_1} \frac{|F- G|}{G^4} \dd y \leq  2^{3/4} \left(\int_{A_1}\frac{|F- G|^4}{G^7} \dd y\right)^{1/4}\ .
$$
Also, pointwise on $A_1$, 
$$|u^6- v^6|=\left|\frac{1}{F^3} - \frac{1}{G^3}\right| = \left| \frac{(G-F)(G^2+GF +F^2)}{F^3G^3}\right|
\leq 14\,\frac{|G-F|}{G^4}\ .$$
Combining the last three estimates, we have
\begin{equation}\label{robu73}
\int_{A_1}|u^6- v^6|\dd y \leq 28 \left(\frac{7}{\pi}\int_{A_1}\left(\int_{\R^2} \frac{|F- G|^4}{(F+|x|^2)^4(G+|x|^2)^4} \dd x\right) \dd y\right)^{1/4}
\end{equation}

For $A_2$, we observe that
\begin{eqnarray*}
\int_{A_2}\left(\int_{\R^2} \frac{|F- G|^4}{(F+|x|^2)^4(G+|x|^2)^4} \dd x\right) \dd y
&\geq& \int_{A_2}\left(\int_{\R^2} \frac{|F- G|^4}{(F+|x|^2)^4(G+|x|^2)^4} \dd x\right) \dd y\\
&\geq& \int_{A_2}\frac{|F- G|^4}{G^7}\left(\int_{\R^2}\frac{1}{(1+|x|^2)^4(F/G +|x|^2)^4}\dd x\right) \dd y\\
&\geq& \frac{1}{16}\int_{A_2}\frac{|F- G|^4}{G^7}\left(\int_{B_1}\frac{1}{(F/G +|x|^2)^4}\dd x\right) \dd y\\
&=&\frac{\pi}{48}\int_{A_2}\frac{|F- G|^4}{G^7} \left[
\frac{1}{(F/G)^3} - \frac{1}{1+(F/G)^3} \right]\dd y\\
&\geq&\frac{\pi}{92}\int_{A_2}\frac{|G-F|^4}{F^3G^4} \dd y,
\end{eqnarray*}
where we used that $(1+|x|^2)^4 \leq 16$ on $B_1$, and that $\frac{1}{2(F/G)^3} \geq \frac{1}{1+(F/G)^3}$.
Since $G/2>F$ on $A_2$,
${\displaystyle \frac{|G-F|^4}{F^3G^4} \geq \frac{1}{16}\frac{1}{F^3}}$ on $A_2$. Therefore
(using that $16\cdot 96 \leq 500 \pi$)
\begin{eqnarray}
\int_{A_2}\left(\int_{\R^2} \frac{|F- G|^4}{(F+|x|^2)^4(G+|x|^2)^4} \dd x\right) \dd y &\geq& \frac{1}{500}
\int_{A_2}\frac{1}{F^3} \dd y\nonumber\\
&\geq& \frac{1}{500}\int_{A_2}\left(\frac{1}{F^3} -\frac{1}{G^3}\right)\dd y
\nonumber\\
&=& \frac{1}{500}\int_{A_2} |u^6-v^6| \dd y.\nonumber
\end{eqnarray}

When $\|f-g\|_4 \leq 1$, the left hand side is not greater than $1$, and hence, taking the fourth root on the left,
we obtain
$$\left(\int_{A_2}\left(\int_{\R^2}
\frac{|F- G|^4}{(F+|x|^2)^4(G+|x|^2)^4} \dd x\right) \dd y\right)^{1/4} \geq 
\frac{1}{500}\int_{A_2} |u^6-v^6| \dd y\ $$
Combining this with (\ref{robu73}), we have
$$\|f - g\|_4^4 
\geq \left(\int_{\{F<G\}}\left(\int_{\R^2} \frac{|F- G|^4}{(F+|x|^2)^4(G+|x|^2)^4} \dd x\right)\right)^{1/4}
\dd y \geq  \frac{1}{500} \int_{\{u>v\}}(u^6- v^6)\dd y\ .$$
By symmetry we also get
$$
\|f - g\|_4^4  \geq  \frac{1}{500} \int_{\{u<v\}}(u^6- v^6)\dd y\ ,
$$
which concludes the proof.
\end{proof}

\subsection{Proof of Theorem~\ref{thm:stability 6A}}
First, suppose that
$u \in W^{1,2}(\R^2)$ is a non-negative function satisfying \eqref{eq:hyp GN u}. 
Collecting together \eqref{delta u f g} and Lemmas \ref{lem: c mu x0} and \ref{lem: fg uv},
we deduce that there exist universal constants
$K_1,\delta_1>0$ such that, whenever $\delta_{{\rm GNS}}[u] \leq \delta_1$, 
\begin{equation}
\label{eq:stability 6}
  \|u^6- v^6(\cdot -y_0)\|_1\leq  K_1\delta_{{\rm GNS}}[u]^{1/2}.
\end{equation}

Next,  $\delta_{{\rm GNS}}[u]$ and $\|u\|_6$ are both  unchanged if $u(y)$ is replaced by $u_\mu := \mu^{1/3}u(\mu y)$.
Thus, assuming only that $\|u\|_6 = \|v\|_6$, we may choose a scale parameter $\mu$ so that 
$\sqrt{2}\|\nabla u_\mu\|_2 = \|u_\mu\|_4^2$. We then learn that
$$\int_{\R^2}\left| \mu^2u^6(\mu y) - v(y- y_0)\right| \dd y  \leq  K_1\delta_{{\rm GNS}}[u]^{1/2}\ .$$
Changing variables once more, and taking $\lambda := 1/\mu$, we obtain
$$\int_{\R^2}\left| u^6( y) - \lambda^{2}v(\lambda y- y_0)\right| \dd y \leq K_1\delta_{{\rm GNS}}[u]^{1/2}\ ,$$
which proves (\ref{eq:stability 6A}) and concludes the proof.

\subsection{Controlling the translation}
So far we know that if $u$ satisfies \eqref{eq:hyp GN u}
there is {\em some translate} $\widetilde u(y) = u(y-y_0)$ of $u$ such that
\begin{equation}
\label{eq:tilde C}
\|\widetilde u^6- v^6\|_1\leq K_1\delta_{{\rm GNS}}[u]^{1/2} 
\end{equation}
for some universal constant $K_1$ (see Theorem \ref{thm:stability 6A}
and Remark \ref{rmk:stability 6}).

 Our goal in this section is to show that under the additional hypotheses that
  \begin{equation}\label{mo1}
  M_p[u] := \int_{\R^2}|y|^p u^6(y)\dd y < \infty \qquad \text{for some $p>1$}
   \end{equation}
  and
 \begin{equation}\label{mo2}
 \int_{\R^2}yu^6(y)\dd y = 0\ ,
 \end{equation}
 then  $\|u^6 - v^6\|_1$ will be bounded by a multiple of some fractional power of $\delta_{{\rm GNS}}[u]^{1/2}$, with the fractional power
 depending on how large $p$ can be taken in (\ref{mo1}). The power would still be $\delta_{{\rm GNS}}[u]^{1/2}$ if we could take $p=\infty$. However,
since $M_4(v)= + \infty$, the useful values of $p$ are those in the range $1< p < 4$. 
 
 \begin{prop}
\label{prop:translation}
Let $u \in W^{1,2}(\R^2)$ be a non-negative function satisfying  \eqref{eq:hyp GN u},
and suppose that $M_p(u) < \infty$ for some $1<p<4$, and that (\ref{mo2}) is satisfied.
 Then there are  constants $\tilde K,\tilde \delta>0$, with $\tilde K$ depending only on 
 $p$ and $M_p(u)$, and $\tilde \delta$ depending only on $p$, so that whenever $\delta_{{\rm GNS}}[u] \leq \tilde\delta$,
 $$\|u^6 - v^6\|_1 \leq \tilde K \delta_{{\rm GNS}}[u]^{(p-1)/2p}\ .$$
 \end{prop}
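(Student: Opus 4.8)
\textbf{Proof plan for Proposition \ref{prop:translation}.}
The plan is to start from the translated estimate \eqref{eq:tilde C}: there is a translate $\widetilde u = u(\cdot - y_0)$ with $\|\widetilde u^6 - v^6\|_1 \leq K_1 \delta_{{\rm GNS}}[u]^{1/2} =: \eta$. The whole point is to show that the translation parameter $y_0$ must itself be small, using the vanishing-center-of-mass hypothesis \eqref{mo2} together with the moment bound \eqref{mo1}. First I would note that since $\int y\, u^6 = 0$, we have $\int y\, \widetilde u^6(y)\,\dd y = -\left(\int u^6\right) y_0 = -\frac{\pi}{2}\,\| v\|_6^6 \cdot(\text{const})\, y_0$; more cleanly, writing $m := \int u^6 = \int v^6 > 0$, one gets $\int y\,\widetilde u^6\,\dd y = -m\, y_0$ (after recentering so that $\int y\, v^6 = 0$, which holds by radial symmetry). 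Thus $|y_0|$ is controlled by $\frac1m\left|\int y(\widetilde u^6 - v^6)\,\dd y\right|$, and I must bound this last integral by a power of $\eta$.

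The main obstacle is precisely that $\int y(\widetilde u^6 - v^6)\,\dd y$ is a \emph{first-moment} integral, so it is not directly controlled by the $L^1$ bound $\|\widetilde u^6 - v^6\|_1 \leq \eta$ — mass could sit far out in the tails. This is where the moment hypotheses enter, via an interpolation/truncation argument. Split $\R^2$ into $\{|y|\le R\}$ and $\{|y|>R\}$. On the ball, $\left|\int_{|y|\le R} y(\widetilde u^6 - v^6)\right| \le R\,\eta$. On the complement, bound $\int_{|y|>R} |y|\,\widetilde u^6\,\dd y \le R^{-(p-1)}\int |y|^p \widetilde u^6\,\dd y = R^{-(p-1)} M_p(u)$ (note $M_p$ is translation-covariant, so $M_p(\widetilde u) \le C(p)(M_p(u) + |y_0|^p\, m)$; a minor bootstrap handles the $|y_0|$ dependence, or one simply works with $\widetilde u$ throughout and absorbs constants), and similarly $\int_{|y|>R}|y|\,v^6\,\dd y \le C R^{-(p-1)}$ since $p<4$ and $v^6 = (1+|y|^2)^{-3}$ has finite $p$-th moment for $p<4$. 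Optimizing over $R$: choosing $R = \eta^{-1/p}$ balances $R\eta \sim R^{-(p-1)}$ and yields $|y_0| \le \tilde C\, \eta^{(p-1)/p}$ with $\tilde C$ depending on $p$ and $M_p(u)$.

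Once $|y_0|$ is small, I would finish by the triangle inequality: $\|u^6 - v^6\|_1 = \|\widetilde u^6(\cdot + y_0) - v^6\|_1 \le \|\widetilde u^6 - v^6\|_1 + \|v^6(\cdot - y_0) - v^6\|_1 \le \eta + \|v^6(\cdot - y_0) - v^6\|_1$. Since $v^6$ is smooth with integrable gradient ($\nabla v^6 \in L^1(\R^2)$), $\|v^6(\cdot - y_0) - v^6\|_1 \le |y_0|\,\|\nabla v^6\|_1 \le C|y_0|$. Combining, $\|u^6 - v^6\|_1 \le \eta + C\tilde C\,\eta^{(p-1)/p}$, and since $(p-1)/p < 1$ the second term dominates for $\eta$ small; recalling $\eta = K_1\delta_{{\rm GNS}}[u]^{1/2}$ gives the exponent $(p-1)/(2p)$ as claimed, with the smallness threshold $\tilde\delta$ depending only on $p$ (to ensure we are in the regime where Theorem \ref{thm:stability 6A} applies and where the $\eta^{(p-1)/p}$ term indeed dominates). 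One should double-check the measurability/integrability bookkeeping for the shifted moment and the harmless fact that recentering $v^6$ is free by symmetry, but these are routine.
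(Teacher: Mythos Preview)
Your proof is correct and follows the same outline as the paper's. The control of $|y_0|$ is carried out in the paper via a single application of H\"older's inequality,
\[
\int |y|\,|\widetilde u^6 - v^6|\,\dd y \leq \bigl(M_p(\widetilde u)^{1/p} + M_p(v)^{1/p}\bigr)\,\|\widetilde u^6 - v^6\|_1^{(p-1)/p},
\]
rather than your truncate-and-optimize argument, but the two are equivalent and yield the same exponent; the bootstrap you flag (absorbing the $|y_0|$-dependence in $M_p(\widetilde u)$) is handled identically in the paper. The one genuine difference is the final step: the paper translates $u$, bounding $\|\widetilde u^6 - u^6\|_1 \leq 6|y_0|\,\|\nabla u\|_2\,\|u\|_{10}^5$ via the fundamental theorem of calculus and H\"older, then controls $\|u\|_{10}$ by another GNS inequality and $\|\nabla u\|_2$ by the normalization $\sqrt{2}\|\nabla u\|_2 = \|u\|_4^2$. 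You instead translate $v$, using simply $\|v^6(\cdot - y_0) - v^6\|_1 \leq |y_0|\,\|\nabla v^6\|_1$, which is immediate since $v$ is explicit with $\nabla v^6 \in L^1(\R^2)$. Your route is more elementary and does not use the second normalization in \eqref{eq:hyp GN u} at all; the paper's route, on the other hand, would survive if the limiting profile were less regular than $v$. (One harmless slip: with $\widetilde u(y) = u(y - y_0)$ and $\int y\,u^6 = 0$ one gets $\int y\,\widetilde u^6 = +m\,y_0$, not $-m\,y_0$; only $|y_0|$ enters the argument.)
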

 
\begin{proof}
First note that
 $$y_0\|v\|_6^6 = \int_{\R^2}y \widetilde u^6(y)\dd y   =   \int_{\R^2}y  v^6(y)\dd y +  \int_{\R^2}y [\widetilde u^6(y)- v^6(y)]\dd y\ .$$
 By the symmetry of $v$, the first term on the right is zero. By H\"older's inequality,
 \begin{eqnarray}
 \int_{\R^2}|y| |\widetilde u^6(y)- v^6(y)| \dd y  &=&  \int_{\R^2} |y| |\widetilde u^6(y)- v^6(y)|^{1/p}|\widetilde u^6(y)- v^6(y)|^{1/q}\dd y\nonumber\\
 &\leq& \left(\int_{\R^2} |y|^p |\widetilde u^6(y)- v^6(y)|\dd y\right)^{1/p}\|\widetilde u^6- v^6\|_1^{1/q}\nonumber\\
  &\leq& \left(M_p(\widetilde u)^{1/p} + M_p(v)^{1/p}\right)\|\widetilde u^6- v^6\|_1^{1/q}\ ,\nonumber
  \end{eqnarray}
where $q=p/(p-1)$.
 Next we note that
 \begin{eqnarray}
 M_p(\widetilde u)^{1/p} &=& \left(\int_{\R^2}|y|^p\widetilde u^6(y)\dd y\right)^{1/p}\nonumber\\
 &=&  \left(\int_{\R^2}|y+y_0|^p u^6(y)\dd y\right)^{1/p}
 \leq M_p(u)^{1/p} + |y_0|\|v\|_6^{6/p} \nonumber
 \end{eqnarray}
 Combining the last three estimates, we obtain
 $$|y_0|\|v^6\|_1  \leq \left[M_p(u)^{1/p} + M_p(v)^{1/p} + |y_0|\|v\|_6^{6/p} \right] \|\widetilde u^6- v^6\|_1^{1/q}\ ,$$
 and therefore by \eqref{eq:tilde C}
 $$|y_0| \leq
\frac{\left(M_p(u)^{1/p}+ M_p(v)^{1/p} \right)K_1\delta^{1/2q}_{{\rm GNS}}[u]}
{ \|v^6\|_1 - \|v^6\|_1^{1/p}K_1\delta^{1/2q}_{{\rm GNS}}[u]}\ .$$
 
Hence, there exist a constant $K'$ depending only on $p$ and $M_p(u)$, and a constant $\delta'$ depending only on $p$,
 such that whenever $\delta_{{\rm GNS}}[u] < \delta'$, 
\begin{equation}\label{mo7}
|y_0| \leq K'\delta^{1/2q}(u)\ .
\end{equation}
Now note that, by the Fundamental Theorem of Calculus and H\"older's inequality,
$$\|\widetilde u^6 - u^6\|_1 \leq 6|y_0|\|\nabla u\|_2\|u\|_{10}^5\ .$$
Then by the GNS inequality
$\|u\|_{10} \leq C\|\nabla u\|_2^{2/5}\|u\|_6^{3/5}$
we obtain
\begin{equation}\label{mo5}
\|\widetilde u^6 - u^6\|_1 \leq  6C |y_0|\|\nabla u\|_2^3 \|u\|_6^3\ .
\end{equation}
We now want to control the right hand side.
By hypothesis,
$\sqrt{2}\|\nabla u\|_2 = \|u\|_4^2$ and
$\|\nabla u\|_2\|u\|_4^2 = \sqrt{\pi}\|u\|_6^3 + \delta_{{\rm GNS}}[u]$. Therefore
$$\|\nabla u\|_2^3 = \sqrt{\frac{\pi}{2}}\|u\|_6^3 + \frac{\delta_{{\rm GNS}}[u]}{\sqrt{2}}\ .$$
Again using the fact that $\|u\|_6 = \|v\|_6=\pi/2$ and that $\delta_{{\rm GNS}}[u]$ is small
(so in particular we can assume $\delta_{{\rm GNS}}[u] \leq 1$)
from (\ref{mo5}) we obtain that 
$$
\|\widetilde u^6 - u^6\|_1 \leq K''|y_0| \ ,
$$
for some universal constant $K''$.
Combining this with (\ref{mo7})  yields the result.
\end{proof}

\subsection{Bounding $\inf_{\lambda>0}\|u^4- v_\lambda^4\|_1$}

As noted in the introduction, in certain PDE applications of  stability estimate for the GNS inequality (\ref{ourgns}), $u^4$ will play the role of a mass density,
and it will be of interest to control  $\inf_{\lambda>0}\|u^4- v_\lambda^4\|_1$ assuming that some moments of $u^4$ exist,
and that the ``normalization'' assumptions
\begin{equation}\label{fo1}
\int_{\R^2} u^4(y)\dd y =   \int_{\R^2} v^4(y)\dd y \qquad{\rm and}\qquad
\int_{\R^2}y u^4(y)\dd y =0\  
\end{equation}
hold.  Since the GNS deficit functional is scale invariant, we cannot hope to get information on the minimizing value of $\lambda$
 out of a bound on $\delta_{{\rm GNS}}[u]$ alone. However, knowing that the density $u^4$ is close to $v^4_\lambda$ for {\em some} $\lambda$
 is a strong information that can be combined with other ones, specific to a particular application, that then fix the scale $\lambda$.
 We shall see an example of this in the next section.  Here we concentrate on proving Theorem~\ref{4thstab} which bounds $\inf_{\lambda>0}\|u^4- v_\lambda^4\|_1$
 in terms of $\delta_{{\rm GNS}}[u]$. 
 
We recall that Theorem~\ref{4thstab} refers to non-negative functions $u\in W^{1.2}(\R^2)$ that satisfy (\ref{fo1}) and also
certain moment and entropy conditions:  Recall we have defined
$$
N_p(u) = \int_{\R^2} |y|^p u^4(y)\dd y  \qquad{\rm and}\qquad S(u) = \int_{\R^2} u^4 \log (u^4) \dd y\ .
$$
and  Theorem~\ref{4thstab} also requires that for some $A,B < \infty$ and some $1 < p < 2$,
\begin{equation}\label{conds}
S(u) \leq A\qquad{\rm and}\qquad  N_p(u) \leq B\ .
\end{equation}

\begin{remark}  Conditions (\ref{conds}) provide some ``uniform integrability control'' on the class of densities $u^4$ that satisfy them.
The proof that we give would yield similar results for essentially any other pair of conditions
that quantify uniform integrability.  The one we have chosen, moments and entropy, are natural in
PDE applications.  It is natural that some such condition is required: a bound on the deficit does not supply any compactness, as is clear from the scale-invariance.
\end{remark}

\noindent{\it Proof of Theorem~\ref{4thstab}:}
The proof is divided in several steps.

\noindent{\it $\bullet$ Step 1:} {\em  We show that $\|u\|_6$ cannot be too small provided $N_p(u)$ is not too large.}  Indeed,
$$\int_{B_R} |u|^4 \dd y \geq \|v\|_4^4 - R^{-p}N_p(u)\ .$$
Choosing $R>0$ such that $R^{-p}N_p(u) = \|v\|_4^4/2$ and using H\"older's inequality, we get
$$\frac{1}{2}\|v\|_4^4 \leq \int_{B_R} |u|^4 \leq \|u\|_6^2 (\pi R^2)^{2/3}\ ,$$
that is
\begin{equation}\label{fri0}
\|u\|_6^6 \geq c_1N_p(u)^{4/p}
\end{equation}
for some universal constant $c_1>0$.

\medskip
\noindent{\it $\bullet$ Step 2:} {\em To apply our previous results, we must multiply $u$ by a constant and rescale. In this step we show that these modifications do not
seriously affect the size of the deficit $\delta_{{\rm GNS}}[u]$.} 

Define
$$\widetilde u(y) :=   \frac{\|v\|_6}{\|u\|_6}\lambda^{1/3}u(\lambda y)\ .$$
where $\lambda$ is chosen so that $\sqrt{2}\|\nabla \widetilde u\|_2 = \|\widetilde u\|_4^2$. Note that $\|\widetilde u\|_6 = \|v\|_6$.
Since the rescaling does not affect the $L^6$ norm, it does not affect the deficit,
but the constant multiple does: we have
\begin{equation}\label{fo3}
\delta_{{\rm GNS}}[\widetilde u] =  \frac{\|v\|_6^3}{\|u\|_6^3}\delta_{{\rm GNS}}[u]\ ,
\end{equation}
By what we have noted in Step 1, we have an  {\it a-priori} upper bound on the factor
$\|v\|_6^3/\|u\|_6^3$ (see \eqref{fri0}), which gives the 
bound
\begin{equation}
\label{eq:control delta}
 \delta( \widetilde u) \leq C \delta_{{\rm GNS}}[u]\ .
\end{equation}

\medskip
\noindent{\it $\bullet$ Step 3:} {\em We now relate the constant multiple
and the scale factor when the deficit is small.}
First, we claim that
\begin{equation}\label{fri}
\left|\|  \widetilde u^4\|_4^4 - \|v\|_4^4\right| \leq C\delta_{{\rm GNS}}[u]\ .
\end{equation}
To see this note that, 
since $\sqrt{2}\|\nabla \widetilde  u\|_2 =  \|\widetilde  u\|_4^2$ (see Step 2), 
$$
\left|2\|  \widetilde u\|_4^6- \pi\| \widetilde u\|_6^6\right| = \delta( \widetilde u)\ ,
$$
The claim then follows by \eqref{eq:control delta} together with 
$$\pi\| \widetilde u\|_6^6 = \pi\| v\|_6^6 = 2\|v\|_4^6\ .$$
Let us also observe that, since
$$\|v\|_4^4 = \|u\|_4^4 =  \lambda^{2/3}\frac{\|u\|_6^4}{\|v\|_6^4}\|\widetilde u\|_4\ ,$$
by \eqref{fri} we also get
\begin{equation}\label{fri2}
|\lambda^{2/3}\|u\|_6^4 - \|v\|_6^4 | \leq C\delta_{{\rm GNS}}[u]\ .
\end{equation}

\medskip
\noindent{\it $\bullet$ Step 4:} {\em We now show that some translate $\hat u^4$ of $\widetilde u^4$  is close to $v^4$ when the deficit is small.}
Theorem \ref{thm:stability 6A} shows that there is a translate
$\hat u(y) = \widetilde u(y-y_0)$ of $\widetilde u$ such that
\begin{equation}\label{eq:hat u v 6}
\|\hat u^6 - v^6\|_1 \leq K_1 \delta_{{\rm GNS}}[\widetilde u]^{1/2}\ .
\end{equation}

Note that for positive numbers $a$ and $b$, 
$$|a^4- b^4| = |a-b|(a^3 + a^2b + a^2b + b^3) \quad{\rm and}\quad
|a^6- b^6| = |a-b|(a^5 + a^4b + a^3b^2 + a^2b^3 + ab^4 + b^5) \ .$$
Hence, since $(a^2+b^2)(a^3 + a^2b + a^2b + b^3) \leq 2(a^5 + a^4b + a^3b^2 + a^2b^3 + ab^4 + b^5)$,
it follows that
\begin{equation}\label{fri3}
|a^4- b^4|  \leq \frac{2}{a^2+b^2}|a^6 - b^6|\ . 
\end{equation}
So, observing that
$$
\frac{1}{u^2+v^2} \leq \frac{1}{v^2} \leq 1+R^2 \qquad \text{on }B_R,
$$
by \eqref{fri3}, \eqref{eq:hat u v 6}, and \eqref{eq:control delta}, we obtain
$$\int_{B_R}|\hat u^4 - v^4|\dd y \leq 2(1+R^2)\|\hat u^6 - v^6\|_1    \leq C (1+R^2)\delta_{{\rm GNS}}[u]^{1/2}\ .$$
Next, using (\ref{fri}),
\begin{eqnarray}
\int_{|x|\geq R} \hat u^4\dd y &=& \|\hat u\|_4^4 - \int_{|x|\leq R} \hat u^4\dd y\nonumber\\
&\leq & \|v\|_4^4 - \int_{|x|\leq R} \hat u^4\dd y + C \delta_{{\rm GNS}}[u]\nonumber\\
&=&   \int_{|x|> R} v^4\dd y +  \int_{|x|\leq R}(v^4-  \hat u^4)\dd y + C \delta_{{\rm GNS}}[u]\nonumber\\
&\leq &   \frac{\pi}{1+R^2} +  \int_{|x|\leq R}|v^4-  \hat u^4|\dd y + C \delta_{{\rm GNS}}[u]\nonumber\\
\end{eqnarray}
Combining results, we then get
$$ \|\hat u^4-v^4 \|_1 \leq   C(1+R^2)\delta_{{\rm GNS}}[u]^{1/2} + C(1+R^2)^{-1}\ ,$$
which (optimizing with respect to $R$) leads to the  estimate
\begin{equation}\label{fri5}
\|\widetilde u^4(\cdot - y_0)-v^4\|_1=\|\hat u^4-v^4 \|_1 \leq   C\delta_{{\rm GNS}}[u]^{1/4}\ .
\end{equation}

\medskip
\noindent{\it $\bullet$ Step 5:} 
Set $u_{1/\lambda}:=\lambda^{1/2}u(\lambda y)$.
Note that $\|u_{1/\lambda}\|_4=\|u\|_4$ and
$$
\int_{\R^2} |\widetilde u^4 - u_{1/\lambda}^4|\dd y
= \left|\frac{\|v\|_6^4}{\|u\|_6^4}\lambda^{-2/3}-1 \right|\|u\|_4^4\,.
$$
Now, by (\ref{fri2}), $\lambda^{2/3}\|u\|_6^4$ is uniformly bounded away from zero (for $\delta_{{\rm GNS}}[u]$
sufficiently small). Therefore
\begin{equation}
\label{eq:tilde u lambda}
\int_{\R^2} |\widetilde u^4 - u_{1/\lambda}^4|\dd y
\leq \frac{C \delta (u)}{\lambda^{2/3}\|u\|_6^4}\|u\|_4^4 \leq C \delta (u)\|u\|_4^4\ ,
\end{equation}
which combined with (\ref{fri5}) gives
\begin{equation}
\label{eq:bound 4 first}
\|u_{1/\lambda}^4(y-y_0)-v^4 \|_1 \leq   C\delta_{{\rm GNS}}[u]^{1/4}.
\end{equation}

\medskip
\noindent{\it $\bullet$ Step 6:} {\em We obtain upper and lower bounds on the scaling parameter $\lambda$.}  We already have an upper bound since 
(\ref{fri2}) says that $\lambda^{-1}\sim \|u\|_6^6$, and (\ref{fri0}) gives a lower bound for $\|u\|_6^6$ in terms of $N_p(u)$. 
Our assumption that $S(u)$, the entropy of $u^4$ (see \eqref{fo2}), is finite enters at this point. 

Since $\|v\|_4 = \pi/2$, for when $\delta_{{\rm GNS}}[u] < \delta_0$,  if follows from (\ref{eq:bound 4 first}) that
$$
\int_{B_1}\lambda^2u^4(\lambda(y-y_0))\dd y \geq \frac{\pi}{4},
$$
or equivalently
$$
\int_{B_{\lambda}(\lambda y_0)}u^4(y)\dd y \geq \frac{\pi}{4}.
$$
Thus, the average value of  $u^4$ on $B_{\lambda}(\lambda y_0)$ is at least $\lambda ^{-2}/2$. Hence by Jensen's inequality,
\begin{eqnarray*}
\frac{1}{\pi \lambda^2}\int_{B_{\lambda}(\lambda y_0)}u^4(y)\log(u^4(y))\dd y &\geq&
\left(\frac{1}{\pi \lambda^2}\int_{B_{\lambda}(\lambda y_0)}u^4(y)\dd y\right)\log\left(\frac{1}{\pi \lambda^2}\int_{B_{\lambda}(\lambda y_0)}u^4(y)\dd y\right)\\
&\geq& \frac{1}{4\lambda^2}\log\left( \frac{1}{4\lambda^2}\right),
\end{eqnarray*}
that is
$$
\int_{B_{\lambda}(\lambda y_0)}u^4(y)\log(u^4(y))\dd y \geq
\pi\left( -\frac{\log 2}{2} - \frac{\log \lambda}{2}\right)\ .
$$
Next we recall a standard estimate, valid for any non-negative integrable function $\rho$ on $\R^2$
with finite first moment (see for instance \cite[Lemma 2.4]{BCC}):
$$
\int_{\R^2}\rho(x)\,\log_- (\rho(x))\,\dd x \leq
\int_{\R^2} |x|\rho(x)\,\dd x + \frac 1e \int_{\R^2} e^{-|x|}\dd x=N_1(u)+\frac{2\pi}{e}\ ,
$$
where $\log_-(s)=\max\{-\log(s),0\}$.
Combining all the estimates together, we arrive at
\begin{equation}\label{sat6}
-\log \lambda \leq \frac{2}{\pi}(S(u) + N_1(u)) + \frac{4}{e} + \log 2\ .
\end{equation}
Since $N_1(u) \leq \|u\|_4^4 + N_p(u)$ for $p \geq 1$, the above inequality provides the desired lower bound on $\lambda$.

\medskip
\noindent{\it $\bullet$ Step 7: We now reabsorb $y_0$.}
Arguing as in the proof of Proposition \ref{prop:translation} and using \eqref{fri5},
\begin{eqnarray}
|y_0|\|v^4\|_4 &\leq&
 \left[N_p(\hat u)^{1/p}+N_p(v)^{1/p}\right]\|\hat u^4 -v^4\|_1^{1/q}\nonumber\\
 &\leq&
 C\left[N_p(\widetilde u)^{1/p}+|y_0|\|v^4\|_1^{1/p}+N_p(v)^{1/p}\right]\delta_{{\rm GNS}}[u]^{1/(4q)}\nonumber\\
 &\leq&
 C\left[\lambda^{-p-2/3}N_p( u)^{1/p}+|y_0|\|v^4\|_1^{1/p}+N_p(v)^{1/p}\right]
\delta_{{\rm GNS}}[u]^{1/(4q)}\ .\nonumber 
\end{eqnarray}
Using the bound (\ref{sat6}) this implies
 $|y_0| \leq C\delta_{{\rm GNS}}[u]^{1/(4q)}$. So, since
$$
\|\widetilde u^4 - \hat u^4\|_1=\|\widetilde u^4 - \widetilde u^4(\cdot - y_0)\|_1 \leq 4 |y_0|
\|\nabla \widetilde u\|_2\|\widetilde u\|_6^3
$$
and $\sqrt{2}\|\nabla \widetilde u\|_2=\|\widetilde u\|_4^2=\|v\|_4^2$,
as in the proof
of Proposition \ref{prop:translation} we get
$$
\|\widetilde u^4-v^4 \|_1 \leq   C\delta_{{\rm GNS}}[u]^{(p-1)/(4p)}\ .
$$
In particular, by \eqref{eq:tilde u lambda}  we obtain
$$
\|u_{1/\lambda}^4-v^4 \|_1 \leq   C\delta_{{\rm GNS}}[u]^{(p-1)/(4p)},
$$
which is equivalent to \eqref{infbndZ}.
\qed

\section{Application to stability for the Log-HLS inequality and to  Keller-Segel equation}
\label{sect:KS}

\subsection{{\it A-priori} estimates}\label{sect:apriori}

In this section we apply the results proved in the previous section, carrying out the strategy for quantitatively bounding the rate of approach to
equilibrium for critcal mass solutions of the Keller-Segel equation, and, along the way, proving a stability result for the Log-HLS inequality.
This and several other results obtained here may be of interest apart from their particular application to the Keller-Segel equation.

First of all, we recall some {\it a-priori}  regularity results concerning functions in level sets of the various functional $\F$, $\D$ and $\H_{\kappa,M}$
that have been defined in the introduction.

 As we have seen $\F[\sigma_{\kappa,M}] = \D[\sigma_{\kappa,M}] = 0$ for all $\kappa$ and $M$. But as $\kappa$ tends to $0$,
 $\sigma_{\kappa,M}(x)\dd x$ tend to a point mass (of mass $M$). Hence the level sets of neither $\F$ nor 
  $\D[\sigma_{\kappa,M}]$  are compact in $L^1(\R^2)$ or even uniformly integrable. It is also easy to see 
  that the level sets of $\H_{\kappa,M}$ are not compact in
  $L^1(\R^2)$, or even uniformly integrable. However, as shown in \cite{BCC}, taken together bounds on various combinations of 
  $\F[\rho]$, $\H_{\kappa,M}[\rho]$ and $\D[\rho]$ do yield strong estimates on $\rho$. 
  
 First, we recall that $\F$ and $\H_\kappa$ provide control of the entropy \cite[Theorem 1.9]{BCC}.
 Here and in the sequel, $\log_+$ denotes the positive part of the natural logarithm function.
  
  \begin{thm}[Entropy bound via $\F$ and $\H_\kappa$]\label{both}
Let $\rho$ be any density on $\R^2$ with mass $M=8\pi$, with $\H_{\kappa,8\pi}[\rho]
< \infty$ for some $\kappa > 0$.   Then there exist positive
computable constants $\gamma_1$ and $C_{\F\H}$, depending only
on $\kappa$ and $\H_{\kappa,8\pi}[\rho]$, such that
\begin{equation}\label{entbnd}
\gamma_1\int_{\R^2}\rho \log_+  \rho \dd x  \le \F[\rho]  +
C_{\F\H}\,.
\end{equation}
\end{thm}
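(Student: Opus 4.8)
The plan is to reduce to bounding a logarithmic self‑energy, and to use the hypothesis $\H_{\kappa,8\pi}[\rho]<\infty$ in two distinct ways: to control a first moment, and — this is the crucial input — to force a definite amount of the mass of $\rho$ to sit, at values $\le 1$, outside a large ball. Write $M=8\pi$, and assume $\F[\rho]<\infty$ (otherwise the conclusion is trivial). Since $\F[\rho]=\int_{\R^2}\rho\log\rho\,\dd x+\frac2M\iint_{\R^2\times\R^2}\rho(x)\log|x-y|\rho(y)\,\dd x\,\dd y$, splitting $\rho\log\rho=\rho\log_+\rho-\rho\log_-\rho$ gives the identity
\begin{equation}\label{pp:id}
\int_{\R^2}\rho\log_+\rho\,\dd x=\F[\rho]+\frac2M\iint_{\R^2\times\R^2}\rho(x)\log\tfrac1{|x-y|}\rho(y)\,\dd x\,\dd y+\int_{\R^2}\rho\log_-\rho\,\dd x .
\end{equation}
So it suffices to bound the logarithmic self‑energy by $(1-\gamma_1)\int\rho\log_+\rho$ plus a constant depending only on $\kappa$ and $B_\H:=\H_{\kappa,M}[\rho]$, together with an analogous bound for $\int\rho\log_-\rho$.

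The easy consequence of $B_\H<\infty$ is a first moment bound: expanding $\H_{\kappa,M}[\rho]=\int(\sqrt\rho-\sqrt{\sigma_{\kappa,M}})^2/\sqrt{\sigma_{\kappa,M}}\,\dd x$ and applying Cauchy–Schwarz against explicit powers of $\sigma_{\kappa,M}$ (using $\int|x|^2\sigma_{\kappa,M}^{3/2}<\infty$ and $\sup_x|x|\sqrt{\sigma_{\kappa,M}}<\infty$) one gets $\int_{\R^2}|x|\rho\,\dd x\le B_1(\kappa,M,B_\H)$, and then the pointwise inequality $s\log_- s\le s|x|+\tfrac1e e^{-|x|}$ ($s\ge0$) gives $\int\rho\log_-\rho\le B_1+\tfrac{2\pi}e$. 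The substantive step is to produce $R=R(\kappa,M,B_\H)$ and $\eta'=\eta'(\kappa,M,B_\H)>0$ with
\begin{equation}\label{pp:outer}
\int_{\{|x|>R\}\cap\{\rho\le1\}}\rho\,\dd x\ \ge\ \eta' .
\end{equation}
I would argue as follows: on $\{\rho\le\tfrac14\sigma_{\kappa,M}\}$ the $\H_{\kappa,M}$‑integrand is $\ge\tfrac14\sqrt{\sigma_{\kappa,M}}$, so $\int_{\{\rho\le\sigma_{\kappa,M}/4\}\cap\{|x|>R\}}\sqrt{\sigma_{\kappa,M}}\le 4B_\H$; since $\int_{\{|x|>R\}}\sqrt{\sigma_{\kappa,M}}=+\infty$, a bathtub computation (worst case: an annulus adjacent to $\partial B_R$) shows that $\{\rho>\tfrac14\sigma_{\kappa,M}\}\cap\{|x|>R\}$ carries $\sigma_{\kappa,M}$‑mass $\ge e^{-cB_\H}\int_{\{|x|>R\}}\sigma_{\kappa,M}$. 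Meanwhile, for $R$ so large that $\sigma_{\kappa,M}<\tfrac14$ off $B_R$, on $\{\rho>1\}\cap\{|x|>R\}$ we have $\sigma_{\kappa,M}\le\rho/4$, so the $\H_{\kappa,M}$‑integrand is $\ge\tfrac14\rho/\sqrt{\sigma_{\kappa,M}}$, giving $\int_{\{\rho>1\}\cap\{|x|>R\}}(\kappa+|x|^2)\rho\le CB_\H$; hence this set has small Lebesgue measure and carries only $O(R^{-6})$ of the $\sigma_{\kappa,M}$‑mass, which for $R$ large is negligible against the $\Theta(R^{-2})$ lower bound above. Combining, $\{\rho\le1\}\cap\{|x|>R\}$ carries $\rho$‑mass at least $\tfrac14$ of a definite fraction of $\int_{\{|x|>R\}}\sigma_{\kappa,M}$, which is \eqref{pp:outer}.

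With \eqref{pp:outer} in hand, I would peel off $\varphi:=\rho\,\mathbf{1}_{\{|x|>R\}\cap\{\rho\le1\}}$ and write $\rho=\varphi+\psi$ with $0\le\varphi\le1$, $\psi\ge0$, $\psi\le\rho$, and $\|\psi\|_1=M-\|\varphi\|_1\le M-\eta'$. Expanding the self‑energy,
\begin{equation}\label{pp:split}
\frac2M\iint\rho\log\tfrac1{|x-y|}\rho=\frac2M\iint\psi\log\tfrac1{|x-y|}\psi+\frac4M\iint\varphi(x)\log\tfrac1{|x-y|}\psi(y)+\frac2M\iint\varphi\log\tfrac1{|x-y|}\varphi .
\end{equation}
For the first term on the right apply the sharp Log‑HLS inequality to $\psi$ (mass $m:=\|\psi\|_1\le M-\eta'$): $\tfrac2m\iint\psi\log\tfrac1{|x-y|}\psi\le\int\psi\log\psi-C(m)$, hence $\tfrac2M\iint\psi\log\tfrac1{|x-y|}\psi\le\tfrac mM\bigl(\int\psi\log_+\psi+|C(m)|\bigr)\le\bigl(1-\tfrac{\eta'}M\bigr)\int\rho\log_+\rho+C_0(M)$, using $\int\psi\log_+\psi\le\int\rho\log_+\rho$ (because $\psi\le\rho$ and $t\mapsto t\log t$ is increasing on $[1,\infty)$) and $|C(m)|\le C_0(M)$ for $0<m\le M$. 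For the last two terms in \eqref{pp:split}, split the kernel at $|x-y|=1$: the contributions from $|x-y|\ge1$ have $\log\tfrac1{|x-y|}\le0$ and are discarded, while those from $|x-y|<1$ are bounded using $0\le\varphi\le1$, $\int_{|z|<1}\log\tfrac1{|z|}\,\dd z=\tfrac\pi2$, and $\|\varphi\|_1,\|\psi\|_1\le M$, yielding absolute constants. Feeding these bounds and the $\log_-$ bound into \eqref{pp:id} gives $\int\rho\log_+\rho\le\F[\rho]+\bigl(1-\tfrac{\eta'}M\bigr)\int\rho\log_+\rho+C_{\F\H}$, i.e. $\tfrac{\eta'}M\int\rho\log_+\rho\le\F[\rho]+C_{\F\H}$; take $\gamma_1:=\eta'/M$, with $\gamma_1,C_{\F\H}$ depending only on $\kappa$ and $B_\H$.

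The hard part is \eqref{pp:outer}: making quantitative the heuristic that $\H_{\kappa,M}[\rho]<\infty$ pins the tail of $\rho$ to that of $\sigma_{\kappa,M}$ tightly enough to keep a positive, low‑valued chunk of mass far out. This is exactly where the logarithmic divergence of $\int\sqrt{\sigma_{\kappa,M}}$ at infinity is used, and one must simultaneously rule out the competing mechanism in which $\rho$ exceeds $1$ at large $|x|$ (which costs $\int(\kappa+|x|^2)\rho$ and so lives on a set of negligible $\sigma_{\kappa,M}$‑mass). Everything else — the moment bound, the application of sharp Log‑HLS to the sub‑mass density $\psi$, the crude near‑field estimate of the $\log$ kernel against the bounded density $\varphi$ — is routine; and I note the value $M=8\pi$ is not used, the argument working for any mass since $\F[\rho]\ge C(M)$ holds for all $M>0$.
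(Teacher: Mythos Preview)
The paper does not prove this theorem; it simply recalls it as \cite[Theorem 1.9]{BCC}. So there is no proof in the paper to compare against, and your task was to supply one.

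Your argument is correct and self-contained. The essential mechanism --- peel off a bounded-by-$1$ chunk $\varphi$ of definite mass $\eta'>0$ sitting at infinity (whose existence is forced by $\H_{\kappa,M}[\rho]<\infty$ through the logarithmic divergence of $\int\sqrt{\sigma_{\kappa,M}}$), then apply the sharp Log-HLS inequality to the leftover $\psi$ of strictly smaller mass $m\le M-\eta'$, thereby gaining a factor $m/M\le 1-\eta'/M$ in front of $\int\rho\log_+\rho$ --- is exactly the right idea, and your quantitative version of the ``tail pinning'' (the bathtub bound on $\{\rho\le\sigma_{\kappa,M}/4\}$ combined with the second-moment penalty on $\{\rho>1\}$ at large $|x|$) is carried out correctly. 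The first-moment bound from $\H_{\kappa,M}$ via Cauchy--Schwarz against $\sigma_{\kappa,M}^{3/4}$ is the same device the present paper uses in the proof of Theorem~\ref{mothm}.

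Two small bookkeeping remarks. First, when you invoke Log-HLS for $\psi$ you implicitly need $\psi\log\psi,\ \psi\log(e+|x|^2)\in L^1$; both follow since $\psi=\rho\,\mathbf 1_{\{|x|\le R\}\cup\{\rho>1\}}$ and you have already secured $\rho\log\rho\in L^1$ (from $\F[\rho]<\infty$) and $\int|x|\rho<\infty$. Second, the degenerate case $\|\psi\|_1=0$ means $\rho$ is supported where $\rho\le1$, so $\int\rho\log_+\rho=0$ and there is nothing to prove. Your closing observation that the specific value $M=8\pi$ plays no role is also correct.
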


Likewise, \cite[Theorem 1.10]{BCC} shows that a bound on $\F$, $\H_{\kappa,M}$ and $\D$ together controls the energy integral 
$\|\nabla u\|_2^2$.

\begin{thm}[Energy bound via $\F$, $\H_{\kappa,M}$ and ${\cal D}$]\label{both2}
Let $\rho$ be any density on $\R^2$  with mass $M=8\pi$ 
with $\F[\rho]$ finite, and $\H_{\kappa,8\pi}[\rho]$ finite for some $\kappa>0$.
Then there exist positive computable  constants $\gamma_2$
and $C_{\F\H\D}$, depending only on $\kappa$, $\H_{\kappa,8\pi}[\rho]$ and
$\F[\rho]$, such that
\begin{equation}\label{ccdbnd}
\gamma_2\,\int_{\R^2}|\nabla \rho^{1/4}|^2 \dd x \le \pi{\cal
D}[\rho] +C_{\F\H\D}\ \, .
\end{equation}
\end{thm}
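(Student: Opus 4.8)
\emph{Plan.} First set $u:=\rho^{1/4}$; the mass condition becomes $\|u\|_4^4=\int_{\R^2}\rho\,\dd y=8\pi$, and Definition~\ref{dissdef} together with this gives the identity
\[
\pi\D[\rho]=\|\nabla u\|_2^2\,\|u\|_4^4-\pi\|u\|_6^6=8\pi\|\nabla u\|_2^2-\pi\|u\|_6^6 .
\]
If $\|\nabla u\|_2=\infty$ then $\D[\rho]=\infty$ and there is nothing to prove, so we may assume $\|\nabla u\|_2<\infty$, whence $\|u\|_6^6<\infty$ by \eqref{ourgns}. Thus the claim is equivalent to an \emph{improved Gagliardo--Nirenberg inequality}: the plan is to show that
\[
\|u\|_6^6\le 4\,\|\nabla u\|_2^2+C' ,
\]
with $C'$ depending only on $\kappa$, $\H_{\kappa,8\pi}[\rho]$ and $\F[\rho]$; substituting into the identity yields $4\|\nabla u\|_2^2\le\D[\rho]+C'$, which is the theorem with the (universal) constant $\gamma_2=4\pi$ and $C_{\F\H\D}=\pi C'$.

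\emph{Proving the improved inequality.} To produce the needed slack in GNS I would use the entropy control already available. By Theorem~\ref{both}, $\int_{\R^2}u^4\log_+(u^4)\,\dd y\le\Gamma$, where $\Gamma:=(\F[\rho]+C_{\F\H})/\gamma_1$ depends only on $\kappa$, $\H_{\kappa,8\pi}[\rho]$, $\F[\rho]$. Using Chebyshev's inequality (on $\{u^4>t^4\}$ one has $u^4\le(\log t^4)^{-1}u^4\log_+(u^4)$ whenever $t\ge e$) I can pick $t\ge e$, depending only on $\Gamma$, with $\int_{\{u>t\}}u^4\,\dd y\le\pi/8$; note also $|\{u>t\}|\le t^{-4}\|u\|_4^4=8\pi t^{-4}$. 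Then decompose $\int u^6=\int_{\{u\le t\}}u^6+\int_{\{u>t\}}u^6$. On $\{u\le t\}$, $u^6\le t^2 u^4$, so $\int_{\{u\le t\}}u^6\le 8\pi t^2$ by the mass constraint. On $\{u>t\}$, write $u=(u-t)_++t$ and use $(a+b)^6\le 32(a^6+b^6)$ for $a,b\ge 0$: the $t^6$ term contributes at most $32t^6|\{u>t\}|\le 256\pi t^2$, while for $w:=(u-t)_+$ — which is nonnegative, lies in $W^{1,2}(\R^2)$ with $\|\nabla w\|_2\le\|\nabla u\|_2$, and has $\|w\|_4^4\le\int_{\{u>t\}}u^4\le\pi/8$ — the sharp GNS inequality \eqref{ourgns} applied to $w$ gives $\pi\|w\|_6^6\le\|\nabla w\|_2^2\|w\|_4^4\le\tfrac\pi8\|\nabla u\|_2^2$, i.e.\ $32\|w\|_6^6\le 4\|\nabla u\|_2^2$. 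Adding the pieces, $\|u\|_6^6\le 264\pi t^2+4\|\nabla u\|_2^2$, which is the desired improved inequality with $C'=264\pi t^2$ (so $C_{\F\H\D}=264\pi^2 t^2$, depending only on $\Gamma$).

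\emph{Expected difficulty.} I do not expect a serious obstacle: the argument is a truncation plus a constant chase. The one step to check carefully is that $w=(u-t)_+$ genuinely belongs to $W^{1,2}(\R^2)$, so that \eqref{ourgns} is legitimately applied to it: $w$ is supported in the finite–measure set $\{\rho>t^4\}$ and is dominated by $u\in L^4$, so $w\in L^2$ by H\"older, while $\nabla w=\mathbf 1_{\{u>t\}}\nabla u\in L^2$. The only other thing to watch is to choose the threshold so that $\|w\|_4^4$ is small enough that the sharp–GNS contribution of $w$ has coefficient strictly below $8$ (any bound $<\pi/4$ works). Note in particular that the argument uses only the entropy control from Theorem~\ref{both}, hence the hypotheses $\F[\rho]<\infty$ and $\H_{\kappa,8\pi}[\rho]<\infty$; the finiteness of $\D[\rho]$ is not needed, consistently with the statement.
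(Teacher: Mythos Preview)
Your proof is correct. The paper itself does not prove Theorem~\ref{both2}; it merely quotes it as \cite[Theorem~1.10]{BCC}, so there is no in-paper argument to compare against. Your self-contained argument---rewrite $\pi\D[\rho]=8\pi\|\nabla u\|_2^2-\pi\|u\|_6^6$ using the mass constraint, then establish the ``improved GNS'' bound $\|u\|_6^6\le 4\|\nabla u\|_2^2+C'$ by truncating at a level $t$ chosen via the entropy bound from Theorem~\ref{both} and applying the sharp inequality \eqref{ourgns} to $(u-t)_+$---is clean and all the steps check: the convexity bound $(a+b)^6\le 32(a^6+b^6)$, the Chebyshev-type choice of $t$ so that $\int_{\{u>t\}}u^4\le\pi/8$, the measure bound $|\{u>t\}|\le 8\pi t^{-4}$, and the verification that $w=(u-t)_+\in W^{1,2}(\R^2)$ (in the paper's sense of this space) are all in order. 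The outcome $\gamma_2=4\pi$ is in fact universal, which is slightly stronger than the statement requires; only $C_{\F\H\D}$ depends on the data, through $\Gamma$ and hence through $\kappa$, $\H_{\kappa,8\pi}[\rho]$, and $\F[\rho]$.
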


Recall the classical Gagliardo-Nirenberg inequality
\begin{equation}\label{GNSalt}
  \int_{\R^2}|v|^{p}\dd x \le D_p\left[\int_{\R^2}|\nabla v|^{2}\dd x\right]^{p/2-2}\int_{\R^2}|v|^{4}\dd x\qquad \forall\, p\in[4,\infty)\, .
\end{equation}
Combining this with (\ref{ccdbnd}), we see that together $\F[\rho]$, $\H_\kappa[\rho]$ and $\D[\rho]$ give us a quantitative 
bound on $\|\rho\|_q$ for all $q< \infty$: 

\begin{cl}[$L^q$ bound $\F$, $\H_{\kappa,M}$ and ${\cal D}$]\label{lqb}
Let $\rho$ be any density on $\R^2$  with mass $M=8\pi$, with $\H_{\kappa,8\pi}[\rho]$ finite for some $\kappa>0$,
such that also  $\F[\rho]$ and  $\D[\rho]$ are finite. Then, for all $q\geq 1$, there is a constant $C$ depending only on $q$, $\kappa$, $M$,
$\F[\rho]$,  $\H_{\kappa,8\pi}[\rho]$ and  $\D[\rho]$, such that 
\begin{equation}\label{Lq}
\|\rho\|_q \leq C\ .
\end{equation}
\end{cl}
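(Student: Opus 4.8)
The plan is to deduce the $L^q$ bound by iterating the classical Gagliardo–Nirenberg inequality \eqref{GNSalt} together with the energy bound of Theorem~\ref{both2}. First I would set $u := \rho^{1/4}$, so that $\|\rho\|_q = \|u\|_{4q}^4$, and recall that Theorem~\ref{both2} gives a bound
$$
\int_{\R^2}|\nabla u|^2\dd x = \int_{\R^2}|\nabla \rho^{1/4}|^2 \dd x \le \frac{\pi \D[\rho] + C_{\F\H\D}}{\gamma_2} =: E,
$$
where $E$ depends only on $\kappa$, $M$, $\F[\rho]$, $\H_{\kappa,8\pi}[\rho]$ and $\D[\rho]$. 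Likewise, $\|u\|_4^4 = \|\rho\|_1 = M = 8\pi$ is controlled. So we have a uniform bound on $\|\nabla u\|_2$ and on $\|u\|_4$.

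Next I would invoke \eqref{GNSalt} directly with $v = u$: for every $p \in [4,\infty)$,
$$
\int_{\R^2}|u|^{p}\dd x \le D_p\left[\int_{\R^2}|\nabla u|^{2}\dd x\right]^{p/2-2}\int_{\R^2}|u|^{4}\dd x \le D_p\, E^{p/2-2}\,(8\pi),
$$
which is a finite quantity depending only on $p$ and on the constants already listed. Thus $\|u\|_p \le C(p,\kappa,M,\F,\H,\D)$ for all $p \in [4,\infty)$. Taking $p = 4q$ for any $q \ge 1$ gives $\|\rho\|_q = \|u\|_{4q}^4 \le C^4 < \infty$, which is exactly \eqref{Lq}. (For $1 \le q < 1$ there is nothing to do since $\|\rho\|_1 = 8\pi$; interpolation between $L^1$ and any higher $L^{q_0}$ also covers $1 \le q \le q_0$ cleanly, but it is simpler just to quote \eqref{GNSalt} with $p = 4q \ge 4$, which holds for all $q \ge 1$.)

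The argument is essentially immediate once one has Theorem~\ref{both2} in hand, so there is no real obstacle at this level; the substance lies entirely in Theorem~\ref{both2} (proved in \cite{BCC}), which converts the three functional bounds into the crucial $\|\nabla \rho^{1/4}\|_2$ estimate. The only point requiring a word of care is the tracking of the dependence of the final constant: $D_p$ is a universal constant depending only on $p$, and $E$ depends only on $\kappa$, $M$, $\F[\rho]$, $\H_{\kappa,8\pi}[\rho]$, $\D[\rho]$, so the constant $C$ in \eqref{Lq} indeed depends only on $q$, $\kappa$, $M$, $\F[\rho]$, $\H_{\kappa,8\pi}[\rho]$ and $\D[\rho]$, as claimed.
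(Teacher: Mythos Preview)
Your proposal is correct and follows exactly the approach the paper indicates: the paper states the corollary as an immediate consequence of combining the Gagliardo--Nirenberg inequality \eqref{GNSalt} with the energy bound \eqref{ccdbnd} from Theorem~\ref{both2}, and does not spell out any further details. Your write-up simply makes this explicit by setting $u=\rho^{1/4}$, bounding $\|\nabla u\|_2^2$ via Theorem~\ref{both2} and $\|u\|_4^4=8\pi$, and then applying \eqref{GNSalt} with $p=4q$; this is precisely the intended argument (the parenthetical ``$1\le q<1$'' is evidently a typo but harmless).
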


Next, with an argument analogous to the one used in \cite{BCC}, we can use the functional $\H_{\kappa,M}$ to control $p$th moments for all $p<2$:

\begin{thm}[Moments and lower bounds on the $L^{3/2}$-norm via $\H_{\kappa,M}$]\label{mothm}
Let  $\rho$ be a density on $\R^2$ with mass $M=8\pi$.
For all  $0\leq p < 2$, there is a constant $C$,
depending only on $p$ and $\kappa$, such that
\begin{equation}\label{moments}
\int_{\R^2}|x|^p\rho(x)\dd x \leq C \bigl(1+ \H_{\kappa,8\pi}[\rho] \bigr)\,,
\end{equation}
\begin{equation}\label{eq:norm32}
\|\rho\|_{3/2} \geq \frac{C}{\bigl(1+ \H_{\kappa,8\pi}[\rho] \bigr)^{1/p}}\,,
\end{equation}
\end{thm}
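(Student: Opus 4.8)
The plan is to establish the moment bound \eqref{moments} by a direct estimate against $\H_{\kappa,8\pi}$, and then to deduce the lower bound \eqref{eq:norm32} from it by a routine truncation-and-interpolation argument. Throughout one may assume $\H_{\kappa,8\pi}[\rho]<\infty$, since otherwise both inequalities are trivial.

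First I would prove \eqref{moments}. Write $a:=\sqrt{\rho}$ and $b:=\sqrt{\sigma_{\kappa,8\pi}}$, so that by Definition \ref{fdent}, $\H_{\kappa,8\pi}[\rho]=\int_{\R^2}\frac{(a-b)^2}{b}\,\dd y$. From the pointwise bound $a^2\le 2(a-b)^2+2b^2$,
\[
\int_{\R^2}|y|^p\rho\,\dd y \;\le\; 2\int_{\R^2}|y|^p(a-b)^2\,\dd y \;+\; 2\int_{\R^2}|y|^p\,\sigma_{\kappa,8\pi}\,\dd y\,.
\]
The last integral is a finite constant depending only on $p$ and $\kappa$, precisely because $p<2$: indeed $\sigma_{\kappa,8\pi}(y)$ decays like $|y|^{-4}$, so $|y|^p\sigma_{\kappa,8\pi}$ is integrable on $\R^2$ exactly when $p<2$. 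For the first integral I would factor the integrand as $|y|^p(a-b)^2=\bigl(|y|^p\,b(y)\bigr)\cdot\frac{(a-b)^2}{b}$ and use that $b(y)$ is a positive multiple of $\sqrt{\kappa}\,/(\kappa+|y|^2)$, so that $\sup_{y\in\R^2}|y|^p\,b(y)=:c(p,\kappa)<\infty$ for every $p\le 2$; hence $\int_{\R^2}|y|^p(a-b)^2\,\dd y\le c(p,\kappa)\,\H_{\kappa,8\pi}[\rho]$, and \eqref{moments} follows.

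Next I would deduce \eqref{eq:norm32}. For $R>0$, split the mass constraint as $8\pi=\int_{B_R}\rho\,\dd y+\int_{\R^2\setminus B_R}\rho\,\dd y$. On the complement of the ball, since $|y|\ge R$ there, $\int_{\R^2\setminus B_R}\rho\,\dd y\le R^{-p}\int_{\R^2}|y|^p\rho\,\dd y\le C R^{-p}\bigl(1+\H_{\kappa,8\pi}[\rho]\bigr)$ by \eqref{moments}; on the ball, Hölder's inequality gives $\int_{B_R}\rho\,\dd y\le\|\rho\|_{3/2}\,(\pi R^2)^{1/3}$. Choosing $R$ so that $C R^{-p}\bigl(1+\H_{\kappa,8\pi}[\rho]\bigr)=4\pi$ forces $\|\rho\|_{3/2}\,(\pi R^2)^{1/3}\ge 4\pi$, i.e.\ $\|\rho\|_{3/2}\ge c\,R^{-2/3}$, and substituting the chosen value of $R$ yields $\|\rho\|_{3/2}\ge c'\bigl(1+\H_{\kappa,8\pi}[\rho]\bigr)^{-2/(3p)}$; since $1+\H_{\kappa,8\pi}[\rho]\ge 1$ and $2/(3p)\le 1/p$, this is a fortiori $\ge c'\bigl(1+\H_{\kappa,8\pi}[\rho]\bigr)^{-1/p}$, which is \eqref{eq:norm32}.

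The argument is essentially routine and I do not expect a real obstacle; the only point requiring care is keeping track of which ingredient is available for which range of $p$. The boundedness of $|y|^p\sqrt{\sigma_{\kappa,8\pi}}$ persists all the way up to $p=2$, but the $p$-th moment of the model density $\sigma_{\kappa,8\pi}$, which appears as the additive constant in \eqref{moments}, diverges exactly at $p=2$; this is what forces the restriction $p<2$. One should also observe that \eqref{eq:norm32} carries content only for $p\in(0,2)$ (being vacuous when $p=0$), and that it suffices to argue under the harmless assumption $\|\rho\|_{3/2}<\infty$, the inequality being trivial otherwise.
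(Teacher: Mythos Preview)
Your proof is correct and, for the moment bound \eqref{moments}, essentially identical to the paper's: both factor $|y|^p$ against $\sqrt{\sigma_{\kappa,8\pi}}$ (which is bounded for $p\le 2$) and reduce to the fast-diffusion entropy, with the finiteness of $\int|y|^p\sigma_{\kappa,8\pi}$ supplying the additive constant and forcing $p<2$.

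For \eqref{eq:norm32} you take a slightly different but equally standard route. The paper applies a single weighted H\"older inequality, writing
\[
8\pi=\|\rho\|_1\le\Bigl(\int(1+|x|^p)\rho\Bigr)^{1/(p+1)}\bigl\|(1+|x|^p)^{-1/p}\bigr\|_3^{p/(p+1)}\|\rho\|_{3/2}^{p/(p+1)},
\]
which gives the exponent $1/p$ directly. Your truncation-and-optimization argument is just as clean and in fact yields the sharper exponent $2/(3p)$, which you then (correctly) weaken to $1/p$ to match the statement. Either approach is routine; yours has the minor advantage of producing a visibly better bound.
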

\begin{proof}
Since $\sigma_{\kappa,8\pi}$ has finite $p$th moments for all $p<2$, to prove \eqref{moments} it suffices to estimate
$$
\int_{\R^2}|x|^p\bigl|\rho(x)-\sigma_{\kappa,8\pi}(x)\bigr|\dd x.
$$
Observing that $|x|^p\leq C/\sqrt{\sigma_{\kappa,8\pi}}(x)$
and that
$$
\frac{\bigl|\rho-\sigma_{\kappa,8\pi}\bigr|}{\sqrt{\sigma_{\kappa,8\pi}}} \leq
\frac{\bigl|\sqrt\rho-\sqrt{\sigma_{\kappa,8\pi}}\bigr|^2}{\sqrt{\sigma_{\kappa,8\pi}}}
+2\sigma_{\kappa,8\pi}^{1/4} \frac{\bigl|\sqrt\rho-\sqrt{\sigma_{\kappa,8\pi}}\bigr|}{\sigma_{\kappa,8\pi}^{1/4}},
$$
we conclude easily using H\"older inequality.

Finally, \eqref{eq:norm32} is a consequence of \eqref{moments}: 
indeed, for any $\theta \in (0,1)$ and $q>1$,
\begin{eqnarray*}
\|\rho\|_1 &\leq& \left( \int_{\R^2} (1+|x|^p)\rho(x)\dd x \right)^{\theta}
\left( \int_{\R^2} \frac{\rho(x)}{(1+|x|^p)^{\theta/(1-\theta)}}\dd x \right)^{1-\theta}\\
&\leq& \left( \int_{\R^2} (1+|x|^p)\rho(x)\dd x \right)^{\theta}\|(1+|x|^p)^{-\theta/(1-\theta)}\|_{q/(q-1)}^{1-\theta} \|\rho\|_q^{1-\theta}
\end{eqnarray*}
Choosing $q=3/2$ and $\theta=1/(p+1)$ (so that $(1+|x|^p)^{-\theta/(1-\theta)}\in L^3(\R^2)$),
we get
$$
8\pi \leq C \|\rho\|_{3/2}^{p/(p+1)},
$$
which proves \eqref{eq:norm32}.
\end{proof}

 We close this subsection with the following observation that will be used later:
unlike $\F$ and $\D$, the functional $\H_{\kappa,M}$ is not scale invariant. Indeed, for any $M>0$, 
$\H_{\kappa,M}[\sigma_{\mu,M}] < \infty$ if and only if $\mu  =\kappa$.   In fact, later we shall need a somewhat more precise version of this estimate, which can be easily proved
by a direct computation: there exists a  constant $c_0>0$ depending only on $\kappa$ and $M$ such that
\begin{equation}\label{mango}
\int_{|y| < R}\frac{|\sqrt{\sigma_{\mu,M}}(y) - \sqrt{\sigma_{\kappa,M}}(y)|^2}{\sqrt{\sigma_{\kappa,M}}(y)}\dd y \geq c_0(\sqrt{\mu} - \sqrt{\kappa})^2\log(R)\,
\end{equation}

\subsection{A quantitative convergence result for the critical mass Keller-Segel equation}

We  now state and prove our a quantitative bound on the rate of relaxation to equilibrium for the critical mass Keller-Segel equation. 

\begin{thm} 
\label{thm:KS}
Let $\rho(x,t)$ be any properly dissipative solution of the Keller-Segel equation of critical mass $M = 8\pi$ in the sense of
 \cite{BCC}, so that in particular
$\H_{\kappa,8\pi}[\rho(\cdot,0)] < \infty$ for some $\kappa>0$,  and  $\F[\rho(\cdot,0)]<\infty$. Let us suppose also that $\int_{\R^2}x\rho(x,0)\dd x =0$.
Then, for all $\epsilon>0$,  there are constants $C_1$ and $C_2$, depending only on $\epsilon$, $\kappa$,  $\H_{\kappa,8\pi}[\rho(\cdot,0)]$ and  $\F[\rho(\cdot,0)]$,
such that, for all $t>0$, 
\begin{equation}\label{decay1}
\F[\rho(\cdot,t)] - C(8\pi)  \leq C_1 (1+t)^{-(1-\epsilon)/8} 
\end{equation}
\begin{equation}\label{decay2}
\inf_{\mu > 0}\| \rho(\cdot,t) - \sigma_{\mu,8\pi}\|_1  \leq C_2 (1+t)^{-(1-\epsilon)/160}.
\end{equation}
Moreover, there is a positive number
$a>0$, depending only on  $\H_{\kappa,8\pi}[\rho(\cdot,0)]$ and  $\F[\rho(\cdot,0)]$, so that
for each $t > 0$, 
$$\inf_{\mu > 0}\| \rho(\cdot,t) - \sigma_{\mu,8\pi}\|_1  = \min_{a < \mu <1/a }\| \rho(\cdot,t) - \sigma_{\mu,8\pi}\|_1.$$
Finally, for each $t>0$, the above minimum is achieved at value $\mu(t)$ such that 
\begin{equation}\label{decay3}
(\mu(t) - \kappa)^2  \leq \frac{C}{\log(e+t)}\ .
\end{equation}
In particular
\begin{equation}\label{decay4}
\| \rho(\cdot,t) - \sigma_{\kappa,8\pi}\|_1 \leq  \frac{C}{\sqrt{\log(e+t)}}\,.
\end{equation}
\end{thm}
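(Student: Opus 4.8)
The strategy is to turn the heuristic roadmap of the Introduction into a quantitative argument, the real analytic input being Theorems~\ref{4thstab}, \ref{loghlsstab} and \ref{loghlsfunccont} together with the dissipation identity \eqref{disslem}--\eqref{entdec}. The first task is to assemble \emph{time-uniform} a-priori bounds along the flow. Since \eqref{KS} is the $\mathrm{W}_2$-gradient flow of $\F_{\rm KS}$, $\F[\rho(\cdot,t)]$ is non-increasing, and by \eqref{entdec} so is $\H_{\kappa,8\pi}[\rho(\cdot,t)]$; hence $\F[\rho(\cdot,t)]\le\F[\rho(\cdot,0)]$ and $\H_{\kappa,8\pi}[\rho(\cdot,t)]\le\H_{\kappa,8\pi}[\rho(\cdot,0)]$ for all $t>0$. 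Feeding these into Theorem~\ref{both}, Theorem~\ref{mothm} and Corollary~\ref{lqb} gives, with constants depending only on $\kappa$, $\H_{\kappa,8\pi}[\rho(\cdot,0)]$ and $\F[\rho(\cdot,0)]$: a uniform bound on $\int_{\R^2}\rho(\cdot,t)\log_+\rho(\cdot,t)$, uniform bounds on $N_p[\rho^{1/4}(\cdot,t)]=\int_{\R^2}|y|^p\rho(y,t)\dd y$ for every $p<2$, a uniform lower bound on $\|\rho(\cdot,t)\|_{3/2}$ via \eqref{eq:norm32}, and (for $t\ge1$, where properly dissipative solutions are regular, see \cite{BCC}) a uniform bound on $\D[\rho(\cdot,t)]$. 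We also note that the center of mass is conserved along \eqref{KS}, so $\int_{\R^2}y\,\rho(y,t)\dd y=0$ for all $t\ge0$.

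Next I would prove \eqref{decay1}. Fix $T\ge2$. By \eqref{entdec}, $\int_1^T\D[\rho(\cdot,t)]\dd t\le\H_{\kappa,8\pi}[\rho(\cdot,0)]$, so there is $\bar t\in[1,T]$ with $\D[\rho(\cdot,\bar t)]\le C/T$; inserting this in the identity \eqref{pass} and using the uniform lower bound on $\|\rho(\cdot,\bar t)\|_{3/2}$ gives $\delta_{{\rm GNS}}[\rho^{1/4}(\cdot,\bar t)]\le C/T$. Applying Theorem~\ref{4thstab} to a fixed multiple of $\rho^{1/4}(\cdot,\bar t)$ (normalized so that $\|\,\cdot\,\|_4=\|v\|_4$; its hypotheses hold by the entropy, moment and center-of-mass bounds above), and using that $p$ may be taken arbitrarily close to $2$, we obtain some $\mu$ with $\|\rho(\cdot,\bar t)-\sigma_{\mu,8\pi}\|_1\le C\,T^{-(1-\epsilon)/8}$, which is \eqref{upbnd1} (recall a dilate of $v_\lambda^4$ is some $\sigma_{\mu,8\pi}$). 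Since $\F[\sigma_{\mu,8\pi}]=C(8\pi)$, the almost-Lipschitz continuity of $\F$ in $L^1$ (Theorem~\ref{loghlsfunccont}, applicable because $\rho(\cdot,\bar t)$ and $\sigma_{\mu,8\pi}$ lie in a class with uniformly bounded entropy and moments) gives $\F[\rho(\cdot,\bar t)]-C(8\pi)\le C\,T^{-(1-\epsilon)/8}$; as $\bar t\le T$ and $\F[\rho(\cdot,t)]$ is non-increasing, $\F[\rho(\cdot,T)]-C(8\pi)\le C\,T^{-(1-\epsilon)/8}$ for all $T\ge2$, i.e. \eqref{exfredec}. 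For $0<t<2$ the left-hand side of \eqref{decay1} is $\le\F[\rho(\cdot,0)]-C(8\pi)$ while the right-hand side is bounded below, so after enlarging $C_1$ we get \eqref{decay1} for all $t>0$.

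Then \eqref{decay2} follows by applying the Log--HLS stability result Theorem~\ref{loghlsstab} at each time $t$: its hypotheses \eqref{hlbnd}--\eqref{dlbnd} are precisely the uniform bounds above, so there is $\mu(t)$ with $\|\rho(\cdot,t)-\sigma_{\mu(t),8\pi}\|_1\le C\,\delta_{\rm HLS}[\rho(\cdot,t)]^{(1-\epsilon)/20}=C\big(\F[\rho(\cdot,t)]-C(8\pi)\big)^{(1-\epsilon)/20}$, and combining with \eqref{decay1} (and relabeling $\epsilon$) gives the rate $(1+t)^{-(1-\epsilon)/160}$; for small $t$ the bound is trivial since the left side is $\le16\pi$. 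To see the infimum is attained on a $t$-independent compact set $[a,1/a]$, note that $\mu\mapsto\|\rho(\cdot,t)-\sigma_{\mu,8\pi}\|_1$ is continuous and, by dominated convergence, tends to $\|\rho(\cdot,t)\|_1+8\pi=16\pi$ as $\mu\to0$ and as $\mu\to\infty$, with a rate controlled by the uniform moment bound (for large $\mu$) and the uniform entropy bound (for small $\mu$), exactly as in Step~6 of the proof of Theorem~\ref{4thstab}; since the infimum is strictly below $16\pi$ (by the elementary $L^1$-stability of $\H_{\kappa,8\pi}$, or by \eqref{decay2} once $t$ is large), the minimizing $\mu(t)$ is forced into a fixed $[a,1/a]$.

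It remains to prove \eqref{decay3}--\eqref{decay4}, and this is where the tail-sensitivity estimate \eqref{mango} enters. For $R>1$ set $\|\varphi\|_R^2:=\int_{B_R}\varphi^2\,\sigma_{\kappa,8\pi}^{-1/2}\dd y$. The triangle inequality gives $\|\sqrt{\sigma_{\mu(t),8\pi}}-\sqrt{\sigma_{\kappa,8\pi}}\|_R\le\|\sqrt{\rho(\cdot,t)}-\sqrt{\sigma_{\kappa,8\pi}}\|_R+\|\sqrt{\rho(\cdot,t)}-\sqrt{\sigma_{\mu(t),8\pi}}\|_R$: the left side is $\ge c\,|\sqrt{\mu(t)}-\sqrt\kappa|\,\sqrt{\log R}$ by \eqref{mango}; the first term on the right is $\le\sqrt{\H_{\kappa,8\pi}[\rho(\cdot,t)]}\le\sqrt{\H_{\kappa,8\pi}[\rho(\cdot,0)]}$; and, since $\sigma_{\kappa,8\pi}^{-1/2}\le C_\kappa(1+R^2)$ on $B_R$ and $|\sqrt a-\sqrt b|^2\le|a-b|$, the second term is $\le C_\kappa^{1/2}(1+R^2)^{1/2}\|\rho(\cdot,t)-\sigma_{\mu(t),8\pi}\|_1^{1/2}$, hence $\le C(1+R^2)^{1/2}(1+t)^{-(1-\epsilon)/320}$ by \eqref{decay2}. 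Choosing $R=(e+t)^{(1-\epsilon)/320}$ keeps the last quantity bounded and yields $|\sqrt{\mu(t)}-\sqrt\kappa|^2\le C/\log(e+t)$; since $\mu(t)\in[a,1/a]$ this is \eqref{decay3}. Finally \eqref{decay4} follows from $\|\rho(\cdot,t)-\sigma_{\kappa,8\pi}\|_1\le\|\rho(\cdot,t)-\sigma_{\mu(t),8\pi}\|_1+\|\sigma_{\mu(t),8\pi}-\sigma_{\kappa,8\pi}\|_1$, the first term decaying polynomially and the second being $\le C|\mu(t)-\kappa|\le C/\sqrt{\log(e+t)}$ by the Lipschitz dependence of $\sigma_{\mu,8\pi}$ on $\mu$ over $[a,1/a]$. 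I expect the main difficulty to be not any individual step but the discipline of keeping every constant uniform in $t$ --- which is precisely why the monotonicity of $\F$ and $\H_{\kappa,8\pi}$ and the $t\ge1$ regularity of properly dissipative solutions are essential --- together with the passage \eqref{decay2}$\Rightarrow$\eqref{decay3}, where the trade-off between the growing weight $1+R^2$ and the decaying $L^1$ distance produces the characteristic two-time-scale behavior: polynomial relaxation onto the family $\{\sigma_{\mu,8\pi}\}_\mu$, followed by merely logarithmic adjustment of the scale toward $\mu=\kappa$.
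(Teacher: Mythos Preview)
Your proposal is correct and follows essentially the same route as the paper's own proof: assemble time-uniform bounds from the monotonicity of $\F$ and $\H_{\kappa,8\pi}$ together with Theorems~\ref{both}, \ref{mothm}; use \eqref{entdec} to find a time $\bar t\in[1,T]$ where $\D$ is small, pass to $\delta_{\rm GNS}$ via \eqref{pass}, apply Theorem~\ref{4thstab} and then Theorem~\ref{loghlsfunccont} to get \eqref{decay1} by monotonicity; feed this into Theorem~\ref{loghlsstab} for \eqref{decay2}; and finally use the weighted-$L^2$ triangle inequality on $B_R$ together with \eqref{mango} and the choice $R=(e+t)^{(1-\epsilon)/320}$ to get \eqref{decay3}--\eqref{decay4}. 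Your write-up is in fact slightly more careful than the paper in two places: you make explicit the need for a uniform-in-$t$ bound on $\D[\rho(\cdot,t)]$ for $t\ge1$ (required so that the constant in Theorem~\ref{loghlsstab} is time-independent), and you spell out the argument confining the minimizing $\mu(t)$ to a fixed interval $[a,1/a]$.
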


As indicated in the introduction, to carry out the proof of Theorem!~\ref{thm:KS}, we need an ``almost Lipschitz'' property of the
functional $\F$. We introduce this next, before turning to the proof of  Theorem~\ref{thm:KS}.

To obtain continuity properties of the Log-HLS functional, we will need to impose some restrictions
on the set of densities.  
In view of the wider interest of the almost Lipschitz continuity of the entropic part of $\F$
(Theorem~\ref{entcont}  below),
our next definition refers to densities on $\R^n$.

\begin{defi}\label{goset} For $p > 0$, $q>1$ and $A,B < \infty$,  let ${\mathcal M}_{n,p,q,A,B}$ denote the set of mass densities $\rho$ on 
$\R^n$ such that
\begin{equation}\label{bnds1}
 \int_{\R^n} |x|^p \rho(x) \dd x  \leq  A  \qquad {\rm and} \qquad  \int_{\R^n}| \rho(x)|^q \dd x  \leq  B\ .
 \end{equation}
\end{defi}

Note that we {\em do not} specify the mass of the densities in ${\mathcal M}_{p,q,A,B}$ though of course they can be bounded above in terms of 
$A$ and $B$. 
A key result (which will be proved later in Subsection \ref{sect:cont}) is that, for any $p$, $q$, $A$ and $B$, the Log-HLS functional is almost Lipschitz continuous on 
${\mathcal M}_{2,p,q,A,B}$:

\begin{thm}\label{loghlsfunccont}  For all $0< \epsilon< 1$, and all $M>0$, 
Then  there is a constant $C$ depending only on $\epsilon$, $M$,  $p$, $q$, $A$ and  $B$  such that for any
$\rho,\sigma \in {\mathcal M}_{2,p,q,A,B}$ both of mass $M$, 
$$\left| \F[\rho] - \F[\sigma]\right| \leq  C \|\rho - \sigma\|_1 ^{1-\epsilon}\ .$$
\end{thm}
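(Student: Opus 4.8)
The plan is to split $\F$ into its entropic part $\mathcal E[\rho] := \int \rho\log\rho$ and its interaction part $\mathcal V[\rho] := 2M^{-1}\iint \rho(x)\log|x-y|\rho(y)\,\dd x\,\dd y$, and to prove an almost-Lipschitz bound for each separately, since the two require rather different arguments. For the entropic part, the key observation is that on $\mathcal M_{n,p,q,A,B}$ one has two-sided uniform integrability control on the integrand $\rho\log\rho$: the superlinear bound $\rho^q \le B$ controls $\rho\log_+\rho$ in regions where $\rho$ is large, while the moment bound $\int |x|^p\rho \le A$ controls the spread of $\rho$ and hence (together with the elementary estimate $\int \rho\log_-\rho \le \int|x|\rho + C$ recalled in Step 6 of the proof of Theorem~\ref{4thstab}) controls $\rho\log_-\rho$. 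Given $\rho,\sigma \in \mathcal M_{n,p,q,A,B}$ with $\|\rho-\sigma\|_1 = \eta$ small, one writes $|\mathcal E[\rho] - \mathcal E[\sigma]| \le \int |\rho\log\rho - \sigma\log\sigma|$ and estimates $s\mapsto s\log s$ by its (locally bounded) derivative $1+\log s$ on the bulk $\{\rho,\sigma \le R\}$ while controlling the tails $\{\rho > R\}$, $\{\sigma > R\}$, and $\{|x| > R\}$ using the moment and $L^q$ bounds; interpolating $L^1$-closeness against these uniform bounds and optimizing in $R$ produces a bound $C\eta^{1-\epsilon}$ for any $\epsilon>0$. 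This is the content that I would isolate as Theorem~\ref{entcont}, stated for densities on $\R^n$ as the excerpt anticipates.

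For the interaction part, the difficulty is the singularity of $\log|x-y|$ at the diagonal and its unbounded growth at infinity. I would write
\[
\mathcal V[\rho] - \mathcal V[\sigma] = \frac{2}{M}\iint \bigl(\rho(x)\rho(y) - \sigma(x)\sigma(y)\bigr)\log|x-y|\,\dd x\,\dd y = \frac{2}{M}\iint \bigl((\rho-\sigma)(x)\,\rho(y) + \sigma(x)\,(\rho-\sigma)(y)\bigr)\log|x-y|\,\dd x\,\dd y,
\]
so that it suffices to bound $\iint |(\rho-\sigma)(x)|\,\mu(y)\,|\log|x-y||\,\dd x\,\dd y$ where $\mu$ is either $\rho$ or $\sigma$. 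Split $\log|x-y| = \log_+|x-y| - \log_-|x-y|$. For the $\log_+$ part, bound $\log_+|x-y| \le \log_+(2|x|) + \log_+(2|y|)$ and use the moment bounds (with $p > 1$, and in fact one only needs a tiny moment of $|x-y|$) to control $\int \log_+(2|x|)|\rho-\sigma|\,\dd x \le \|\rho-\sigma\|_1^{1-\epsilon}(\int(1+|x|)^{p}|\rho-\sigma|)^{\epsilon}\cdot C$ by Hölder, the last factor being bounded by $2A$-type quantities. For the $\log_-$ part, the inner integral $\int \mu(y)\log_-|x-y|\,\dd y$ is uniformly bounded in $x$ because $\log_-|x-y| = \log_+\frac{1}{|x-y|} \in L^{q'}_{\rm loc}$ near the diagonal and $\mu \in L^q$ with $q > 1$, so $\int \mu(y)\log_-|x-y|\,\dd y \le \|\mu\|_q \|\log_-|\cdot|\|_{L^{q'}(B_1)} =: C_0 < \infty$ uniformly; hence this term is simply $\le C_0\|\rho-\sigma\|_1$, which is better than what we need.

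Combining the entropic estimate with the interaction estimate and absorbing all constants (which depend only on $M$, $p$, $q$, $A$, $B$, and the chosen $\epsilon$) gives the claimed inequality $|\F[\rho] - \F[\sigma]| \le C\|\rho-\sigma\|_1^{1-\epsilon}$. The main obstacle I anticipate is the entropic part: unlike the interaction term, whose singular piece is genuinely $L^1$-Lipschitz thanks to the $L^q$ bound, the function $s\mapsto s\log s$ is not Lipschitz near $s=0$ nor globally on $[0,\infty)$, so one genuinely loses a power and must run the truncation-and-interpolation argument carefully, tracking how the exponent $1-\epsilon$ emerges from balancing the bulk term (linear in $\eta$ but with a $\log R$ constant) against the tail terms (which decay in $R$ at rates governed by $p$ and $q$). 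Everything else is a matter of bookkeeping with Hölder's inequality and the elementary entropy-from-below estimate already cited in the paper.
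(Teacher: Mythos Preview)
Your proposal is correct and follows essentially the same route as the paper: split $\F$ into its entropic and interaction parts, handle the entropy by a truncation-and-tail argument balancing the $L^q$ bound against the moment bound (this is exactly Theorem~\ref{entcont} and its two supporting lemmas), and handle the interaction by the same $\log_+/\log_-$ split, with the $\log_-$ piece genuinely Lipschitz via $\log_-|\cdot|\in L^{q'}_{\rm loc}$ and the $\log_+$ piece controlled by H\"older against moments to lose only an $\epsilon$ power. The only cosmetic difference is that the paper obtains the slightly sharper bound $C\|\rho-\sigma\|_1\,|\log\|\rho-\sigma\|_1|$ for the entropic part before absorbing it into $\|\rho-\sigma\|_1^{1-\epsilon}$.
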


\noindent{\it Proof of Theorem~\ref{thm:KS}:} 
Of course it suffices to prove all of the  estimates  in Theorem~\ref{thm:KS} for $t$ large.

As shown in \cite{BCC}, for all $t_0>0$, and all
$p< 2$ and $q<\infty$, there exist finite constants $A$ and $B$ such that 
$${\rm for\ all}\quad t \geq t_0\ , \qquad \rho(\cdot,t) \in   {\mathcal M}_{2,p,q,A,B}\,, $$
see also Subsection \ref{sect:apriori}.

Choose $t_0=1$.
As noted earlier in this section (see \eqref{decay}),
by the definition of properly dissipative solution
\begin{equation}\label{entdec2}
\H_{\kappa,8\pi}[\rho(\cdot,T)] + \int_0^T \D[\rho(\cdot,t)] \dd t  \leq \H_{\kappa,\pi}[\rho(\cdot,0)]\ , 
\end{equation}
we immediately deduce that, for all $T \geq 1$,
\begin{equation}\label{entdec2Z}
\frac{1}{T - 1}  \int_{1}^T \D[\rho(\cdot,t)] \dd t  \leq \frac{1}{T - 1} \H_{\kappa,\pi}[\rho(\cdot,0)]\ .
\end{equation}

Then, Theorem \ref{mothm} together with \eqref{entdec2}
ensures a uniform bound on $\int_{\R^2}|x|^p \rho(x,t)\dd x$ for any $p<2$,
which also ensure a lower bound on $\|\rho(\cdot,t)\|_{3/2}$.

Hence, by  (\ref{pass}) and \eqref{entdec2Z},
we deduce the existence of some $1\leq t \leq T$,
$$
\delta[\rho^{1/4}(\cdot,t)] \leq   \frac{C}{T}   \H_{\kappa,\pi}[\rho(\cdot,0)]\ .
$$

Next, Theorem~\ref{both} gives us an {\it a-priori} 
upper bound on the entropy $S[\rho(\cdot,t)]$, and thus permits us to apply 
Theorem \ref{4thstab} for $T$ sufficiently large.
We conclude that, for any $p<2$, there exist $a>0$,
 \textit{some} $\mu\in[a,1/a]$, and {\em some} $1 \leq t \leq T$, such that
\begin{equation}\label{upbnd1Z}
\|\rho(\cdot,t) - \sigma_{\mu,8\pi}\|_1  \leq C \left(\frac{1}{T}   \H_{\kappa,\pi}[\rho(\cdot,0)] \right)^{(p-1)/4p}\ ,
\end{equation}
(recall that the density $v_\lambda^4$ is a multiple of some $\sigma_{\mu,8\pi}$).

Next, since we can choose $p$ arbitrarily close to $2$,
by Theorem~\ref{loghlsfunccont}
$$
\F[\rho(\cdot,t)] - C(8\pi)  \leq C T^{-(1-\epsilon)/8}
$$
for {\em some} $1 \leq t \leq T$. However we can now use that 
$\F[\rho(\cdot,t)]$ is monotone decreasing to deduce that
\begin{equation}\label{upbnd1ZW}
\F[\rho(\cdot,T)] - C(8\pi)  \leq C T^{-(1-\epsilon)/8} \ 
\end{equation}
\emph{for all} $T$ sufficiently large. 
Hence, up to
adjusting the constant $C$ we obtain (\ref{decay1}).

We may now apply Theorem~\ref{loghlsstab} to conclude that for 
 {\em all}   $t>0$, there is some $\mu=\mu(t)\in[a,1/a]$ such that
\begin{equation}\label{upbnd1ZZ}
\|\rho(\cdot,t) - \sigma_{\mu,8\pi}\|_1  \leq C (1+t)^{-(1-\epsilon)/160} \ .
\end{equation}

Finally, we use the bound on $\H_{\kappa,8\pi}[\rho(\cdot,t)]$ to fix the scale:
Since $|\a^2-\b^2|^2 \leq |\a^4-\b^4|$, \eqref{upbnd1ZZ} implies
\begin{equation}\label{mango1}
\|\sqrt{\rho(\cdot,t)} - \sqrt{\sigma_{\mu,8\pi}}\|_2^2 \leq   C (1+t)^{-(1-\epsilon)/160} 
\end{equation}
By the triangle inequality, (\ref{mango}), (\ref{mango1}), and using that $ \sqrt{\sigma_{\mu,8\pi}} \geq \kappa^{1/2}(\kappa + R^2)^{-1}$ inside $B_R$, we get
\begin{eqnarray}
\sqrt{\H_{\kappa,8\pi}[\rho(\cdot,t)]} 
&\geq&  
\left(\int_{|y| < R}\frac{\sqrt{\rho(y,t)} - \sqrt{\sigma_{\kappa,8\pi}(y)}}{\sqrt{\sigma_{\kappa,8\pi}(y)}}\dd y\right)^{1/2}\nonumber\\
&\geq&  
\left(\int_{|y| < R}\frac{\sqrt{\sigma_{\mu,8\pi}(y)} - \sqrt{\sigma_{\kappa,8\pi}(y)}}{\sqrt{\sigma_{\kappa,8\pi}(y)}}\dd y\right)^{1/2}- 
C\sqrt{\frac{\kappa+R^2}{\kappa^{1/2}}} (1+t)^{-(1-\epsilon)/320}\nonumber\\
&\geq&  c(\kappa-\mu)^2\log(R)  -C\sqrt{\frac{\kappa+R^2}{\kappa^{1/2}}} (1+t)^{-(1-\epsilon)/320} \ ,\nonumber
\end{eqnarray}
where in the last line we have used (\ref{mango}) and the fact that we have an {\it a-priori} lower bound on $\mu$.

Thus,
$$
(\kappa-\mu)^2 \leq \frac{C}{\log(R)} \left[\sqrt{\H_{\kappa,8\pi}[\rho(\cdot,0)]}  +\sqrt{\frac{\kappa+R^2}{\kappa^{1/2}}} (1+t)^{-(1-\epsilon)/320}  \right]
$$
Choosing $R= (e+t)^{(1-\epsilon)/320}$ we get
$$
(\kappa-\mu)^2  \leq \frac{C}{\log(e+t) },
$$
as desired.
Finally \eqref{decay4} follows from \eqref{decay2} and \eqref{decay3},
observing that
$$
\|  \sigma_{\mu,8\pi}-\sigma_{\kappa,8\pi}\|_1 \leq C_\kappa |\mu -\kappa| \qquad \forall\, \mu >0\,,
$$
with $C_\kappa$ depending on $\kappa$ only.
\qed

\subsection{Stability for the Logarithmic HLS inequality: proof of Theorem~\ref{loghlsstab}}

We now prove a stability result for the  Log-HLS
inequality. 
The proof of Theorem~\ref{loghlsstab} is based on the recently discovered fact \cite{CCL} that $\F$ is decreasing along the fast diffusion flow. 
Moreover, since the fast diffusion flow is gradient flow for $\H_{\kappa,M}$,
also $\H_{\kappa,M}$ is decreasing along the fast diffusion flow. While 
$\D$ is not decreasing along the flow, the dissipation relation gives us 
\begin{equation}\label{disrel}
\int_0^T \D[\sigma(\cdot ,t)] \dd t \leq \F[\rho]- C(M)\ ,
\end{equation}
where $\sigma(x,t)$ is the solution to (\ref{fdA}) with initial data $ \sigma(\cdot,0)=\rho$.  The  estimate (\ref{disrel}) is proved in \cite{CCL} for initial data $\rho$
such that, for some $C, R>0$, $\rho(x) \leq C|x|^{-4}$ for all $|x|>R$. Then, regularity estimates from \cite{BV} permit one to integrate by parts and prove that $\lim_{t\to\infty}\F[\rho(\cdot,t)] = C(M)$, which leads to (\ref{disrel}). However, the regularity provided by \cite{BV} is only used in 
a qualitative way, and the values of $R$ and $C$ do not matter.  Hence, a simple truncation and replacement argument can be used to achieve these bounds while making an arbitrarily small effect on $\F[\rho]$ and $\H_\kappa[\rho]$, and moving $\rho$ an arbitrarily small distance in the $L^1$ norm. So, we may freely assume the bound  $\rho(x) \leq C|x|^{-4}$ for all $|x|>R$ for some finite 
constants $C$ and $R$.

\medskip

\noindent{\it Proof of Theorem~\ref{loghlsstab}:}  Let $\sigma(\cdot,t)$ be the solution of (\ref{fdA}) with initial data $\rho$. As
as explained above the dissipation relation \eqref{disrel} holds, therefore
\begin{equation}\label{disslem2}
\delta_{\rm HLS}[\rho]  \geq \int_0^T \D[ \sigma(\cdot,t)]\dd t\qquad \forall\,T>0\ .
\end{equation}

We proceed form here  in several steps: 

\medskip

\noindent{\it $\bullet$ Step 1: The HLS deficit of $\rho$ controls the GNS deficit of $\sigma(\cdot,t)$ for some $t$ close to $0$.} 
Pick some $T \in (0,1]$, with $\delta_{\rm HLS}[\rho] \ll T \ll 1$, to be chosen later. 
 Then by  \eqref{disslem2},
there exists some $t \in (0,T)$ such that
\begin{equation}\label{goodset} 
 \D[\sigma(\cdot,t)] \leq \frac{ \delta_{\rm HLS}[\rho]}{T}\ 
 \end{equation}

Since $\H_{\kappa,M}[\sigma(\cdot,t)]$  is decreasing along the flow, Theorem~\ref{mothm} gives us a lower bound on $\|\sigma(\cdot,t)\|_{3/2}$. Then, by (\ref{pass}), there is a constant $C>0$
such that 
$$
{\mathcal D}[\sigma(\cdot,t)]  \geq C\delta[\sigma^{1/4}(\cdot,t)]\ .
$$
Hence, by \eqref{goodset}  we get
 $$
 \delta[\sigma^{1/4}(\cdot,t)] \leq C   \frac{  \delta_{\rm HLS}[\rho]}{T}\  .
 $$

\medskip

\noindent{\it $\bullet$ Step 2: Application of stability for the GNS  inequality.}  Recalling 
that $v_\lambda^4$ is a multiple of $\sigma_{\mu,M}$ for some $\mu$,  by
Theorem \ref{4thstab} and Step 1, there exists some
$\mu>0$ (on which we have {\it a-priori} bounds above and below) such that
$$\| \sigma(\cdot,t)-\sigma_{\mu,M}  \|_1  \leq C\left( \frac{\delta_{\rm HLS}[\rho]}{T}  \right)^{(p-1)/4p} \ .$$
Hence, by the triangle inequality,
\begin{equation}\label{app55}
\| \rho-\sigma_{\mu,M} \|_1  \leq  C \|  \rho-\sigma(\cdot,t) \|_1 +   C\left( \frac{  \delta_{\rm HLS}[\rho]}{T}\right)^{(p-1)/4p} \ .
\end{equation}

\noindent{\it $\bullet$ Step 3: Controlling $\|\rho - \sigma(\cdot,t)\|_1$.} We claim that for all $\epsilon>0$ and all $1< p < 2$, 
there is a constant $C$ such that 
\begin{equation}\label{app56}
 \|  \rho-\sigma(\cdot,t) \|_1  \leq  C\left(  1 + \left( \frac {\delta_{\rm HLS}[\rho]}{T}\right)^{p/4(p+1)}\right) T^{p(1-\epsilon)/4(p+1)} + 
CT^{p(1-\epsilon)/8(p+1)}\ .
 \end{equation}
This will be proven below.
Assuming this for now, we complete the proof in the next step.

\noindent{\it $\bullet$ Step 4: Optimizing in $T$.} 
Combining \eqref{app55} and \eqref{app56}, we get
$$
\| \rho- \sigma_{\mu,M} \|_1 \leq C\left( \frac{  \delta_{\rm HLS}[\rho]}{T}\right)^{(p-1)/4p}+ 
CT^{p(1-\epsilon)/8(p+1)} .
$$
Setting ${\displaystyle r = \frac{p(1-\epsilon)}{8(p+1)}}$ and  ${\displaystyle s = \frac{p-1}{4p}}$,
we choose $T:= \delta_{\rm HLS}[\rho]^{s/(r+s)}$ to obtain
$$
\| \rho-\sigma_{\mu,M} \|_1 \leq C\delta_{\rm HLS}[\rho]^{rs/(r+s)}.
$$
Since $p$ can be chosen arbitrarily close to $2$, we obtain the result. \qed

\medskip
We close this subsection by proving (\ref{app56}). To this aim,
we make use of the fact that, for each $\kappa>0$,  the equation (\ref{fdA}) is a gradient 
flow of
the functional 
 $\H_{\kappa,M}$, with respect to the $2$-Wasserstein metric ${\rm W}_2$, on the space
of densities of mass $M$. This has the standard consequence that 
 \begin{equation}
\label{app45}
{\rm W}_2^2(\sigma(\cdot,s), \sigma(\cdot,t)) \leq \H_{\kappa,M}[\rho] (t-s)
\end{equation}
for all $t>s \geq 0$, see for instance \cite{AGS} or also \cite[Lemma 5.3]{BCC}.
That is, the fact that the equation is gradient flow for the $2$-Wasserstein metric automatically yields a H\"older$-1/2$ modulus of continuity bound in this metric. What we need now
is to improve this bound into a $L^1$ continuity. 

In the proof, we use (\ref{app45}) together with the following interpolation 
result, see \cite[Theorem 5.11]{BCC}:

\begin{thm}[Interpolation bound]\label{inerpbnd}
Let $\sigma_0$ and $\sigma_1$ be two densities of mass $M$ on
$\R^2$ such that for some $q>2$, $\|\sigma_0\|_{q+1}^{q+1}\ ,\
\|\sigma_1\|_{q+1}^{q+1} \leq K$. Suppose also that
$\sigma_0^{1/4}$ and  $\sigma_1^{1/4}$ have square integrable
distributional gradients.  Then
\begin{eqnarray}
\|\sigma_0 - \sigma_1\|_2^2 &\leq& \left(  \|\nabla
(\sigma_0^{1/4})\|_2 + \|\nabla (\sigma_1^{1/4})\|_2\right)
(2^{5/2} + 2^{9/2}K)
 (\W_2(\sigma_0, \sigma_1))^{(4q-3)/(4q+2)}\nonumber\\
 &+& 16 M^{(q-1)/q}K^{(q+2)/2q} (\W_2(\sigma_0, \sigma_1))^{(q-1)/(2q+1)}\ .\nonumber
 \end{eqnarray}
\end{thm}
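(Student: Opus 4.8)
The plan is to interpolate the weak (Wasserstein) control against the $\dot W^{1,2}$-regularity of the fourth roots $u_i:=\sigma_i^{1/4}$. I would represent $\|\sigma_0-\sigma_1\|_2$ by $L^2$--$L^2$ duality; for each test function, split it into a mollified part (to be controlled by optimal transport) and a remainder (to be controlled by a translation estimate that uses $\nabla u_i$); and then choose the mollification scale.

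First I would record two facts. \emph{(a) Regularity along the geodesic.} Let $(\sigma_s)_{s\in[0,1]}$ be the displacement interpolation between $\sigma_0$ and $\sigma_1$; since $q+1>1$, the functional $\rho\mapsto\int\rho^{q+1}\,\dd x$ is displacement convex \cite{McC}, so $\|\sigma_s\|_{q+1}\le K^{1/(q+1)}$ for all $s$. \emph{(b) Transport estimate.} With $T$ the Brenier map, for Lipschitz $\phi$ one writes $\phi(T(x))-\phi(x)=\int_0^1\nabla\phi(x+s(T(x)-x))\cdot(T(x)-x)\,\dd s$ and uses Cauchy--Schwarz in $(s,x)$ to get $|\int\phi\,\dd(\sigma_0-\sigma_1)|\le \W_2(\sigma_0,\sigma_1)(\int_0^1\!\int|\nabla\phi|^2\,\dd\sigma_s\,\dd s)^{1/2}$; bounding $\int|\nabla\phi|^2\,\dd\sigma_s$ by Hölder (exponents $(q+1)/q$ and $q+1$) and (a) turns this into $|\int\phi\,\dd(\sigma_0-\sigma_1)|\le K^{1/(2(q+1))}\W_2(\sigma_0,\sigma_1)\|\nabla\phi\|_{2(q+1)/q}$.

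Next, fix an even mollifier $\eta$ and set $\eta_\epsilon(x)=\epsilon^{-2}\eta(x/\epsilon)$. I would use the elementary translation bound $\|w-w(\cdot-a)\|_p\le C_p|a|^{2/p}\|\nabla w\|_2$ valid for $w\in\dot W^{1,2}(\R^2)$, $p>2$ (the case $p=2$ is the fundamental theorem of calculus, and the general case follows from the two-dimensional Gagliardo--Nirenberg inequality $\|f\|_p\le C\|f\|_2^{2/p}\|\nabla f\|_2^{1-2/p}$ applied to $f=w-w(\cdot-a)$). Combined with $|\sigma_i(x)-\sigma_i(x-a)|\le 4(|u_i(x)|+|u_i(x-a)|)^3|u_i(x)-u_i(x-a)|$ and Hölder (choosing the exponent so that the cubic factor is absorbed into $\|\sigma_i\|_{q+1}^{q+1}$), this gives $\|\sigma_i-\sigma_i*\eta_\epsilon\|_2\le C\epsilon^{(2q-1)/(2(q+1))}K^{3/(4(q+1))}\|\nabla u_i\|_2$. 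Then, for $\|\phi\|_2\le1$, I would write $\int(\sigma_0-\sigma_1)\phi=\int[(\sigma_0-\sigma_1)-(\sigma_0-\sigma_1)*\eta_\epsilon]\phi+\int(\sigma_0-\sigma_1)(\phi*\eta_\epsilon)$: the first term is at most $\|(\sigma_0-\sigma_1)-(\sigma_0-\sigma_1)*\eta_\epsilon\|_2\le C\epsilon^{(2q-1)/(2(q+1))}K^{3/(4(q+1))}(\|\nabla u_0\|_2+\|\nabla u_1\|_2)$, while the second, since $\phi*\eta_\epsilon$ is Lipschitz, is handled by (b) together with Young's inequality $\|\phi*\nabla\eta_\epsilon\|_{2(q+1)/q}\le C\epsilon^{-(q+2)/(q+1)}$, and is at most $CK^{1/(2(q+1))}\W_2(\sigma_0,\sigma_1)\epsilon^{-(q+2)/(q+1)}$. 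Taking the supremum over $\phi$, squaring, and choosing $\epsilon$ (a suitable power of $\W_2$) produces a bound of the asserted type; the purely $(M,K)$ term appears when, for $\epsilon$ not small, one estimates the remainder instead by the interpolation $\|\sigma_i\|_2\le K^{1/(2q)}M^{(q-1)/(2q)}$.

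The main obstacle is twofold. The technical point --- existence and measurability of the Brenier map and the identification of $\sigma_s$ as its pushforward --- is routine since $\sigma_0$ is absolutely continuous (with an approximation argument if needed). The substantive difficulty is the exponent bookkeeping: carrying the Hölder and Young exponents through the steps above and committing to the precise split and the precise power $\epsilon=\W_2^{\gamma}$ that yield exactly $\W_2^{(4q-3)/(4q+2)}$ and $\W_2^{(q-1)/(2q+1)}$ with the stated coefficients (affine in $K$, linear in $\|\nabla u_i\|_2$). There is genuine freedom here --- full optimization in $\epsilon$ gives a product of powers, while a fixed power of $\W_2$ gives a two-term sum --- so several cosmetically different but qualitatively equivalent inequalities are available, and one must pin down the one matching the statement.
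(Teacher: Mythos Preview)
The paper does not prove this theorem: it is quoted from \cite[Theorem~5.11]{BCC} and invoked as a black box in the proof of the estimate~(\ref{app56}). There is therefore no proof in the present paper against which to compare your proposal.

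That said, your strategy---duality, a mollification split, a transport estimate on the smooth part using displacement convexity of $\int\sigma^{q+1}$ along the $\W_2$-geodesic, and a translation/regularity estimate on the rough part exploiting $\nabla(\sigma_i^{1/4})\in L^2$---is the natural route to such $L^2$--$\W_2$ interpolation bounds and is in the same spirit as the argument in \cite{BCC}. The outline is sound. The genuine gap, which you flag yourself, is the exponent bookkeeping: if one actually balances your two contributions ($\epsilon^{(2q-1)/(2(q+1))}$ against $\W_2\,\epsilon^{-(q+2)/(q+1)}$) and squares, one lands on the exponent $(4q-2)/(4q+3)$ for $\W_2$, not the stated $(4q-3)/(4q+2)$; and one obtains a single optimized product rather than the two-term sum with a separate $\|\nabla u_i\|_2$--free piece carrying the exponent $(q-1)/(2q+1)$. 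Reproducing the \emph{exact} form of the statement requires committing to the specific (non-optimal) choice of $\epsilon$ and the alternative remainder bound via $\|\sigma_i\|_2\le K^{1/(2q)}M^{(q-1)/(2q)}$ made in \cite{BCC}; those choices cannot be read off from the present paper.
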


\noindent{\it Proof of \eqref{app56}:} We apply the interpolation bound quoted above with $\sigma_0 = \rho$ and $\sigma_1 = \sigma(\cdot,t)$. 
By Theorem \ref{both2} and (\ref{goodset}), we have 
$$\|\nabla \sigma_0^{1/4}\|_2^2 \leq C\,, \qquad \|\nabla \sigma_1^{1/4}\|_2^2 \leq C\left(1 + \frac {\delta_{\rm HLS}[\rho]}{T}\right)\ ,$$
where $C$ depends only on $\kappa$, $B_\F$, $B_\H$ and $B_\D$. As noted above, thanks to (\ref{GNSalt}), this  gives 
$\|\sigma_0\|_{q+1}^{q+1}\leq K$, for a constant $K$ depending only on $q$, $\kappa$, $B_\F$, $B_\H$ and $B_\D$.
Then, using the fact that the evolution (\ref{fdA}) is a bounded in all $L^p$ norms and is uniformly bounded for $t\in [0,1]$, we obtain such a bound also for 
$\|\sigma_1\|_{q+1}^{q+1}$ (recall that, by assumption, $t \in [0,T]\subset [0,1]$).

By (\ref{app45}), $\W_2(\sigma_0, \sigma_1) \leq C\sqrt{T}$.  Pick $\epsilon>0$, and choose $q$ so large that 
$$\frac{4q-3}{4q+2} > 1-\epsilon\qquad{\rm and}\qquad \frac{q-1}{2q+1} >  \frac{1-\epsilon}{2}\ .$$
We then have
$$\|\sigma_0 - \sigma_1\|_2 \leq C\left(  1 + \left( \frac {\delta_{\rm HLS}[\rho]}{T}\right)^{1/4}\right) T^{(1-\epsilon)/4} + 
CT^{(1-\epsilon)/8}\ .$$

Next, we estimate, for non-negative functions $f$ on $\R^2$, and all $R>0$,
\begin{eqnarray}
\|f\|_1  &=& \int_{|x| \le R} f(x)\dd x + \int_{|x| \ge R} f(x)\dd x\nonumber\\
&\le& (\pi R^2)^{1/2}\|f\|_2 + \frac{1}{R^p}\int_{\R^2}|x|^pf(x)\dd x\ .\nonumber
\end{eqnarray}
Optimizing in $R$ yields
$$\|f\|_1 \leq C\|f\|_2^{p/(p+1)} \left(\int_{\R^2}|x|^p |f(x)|\dd x\right)^{1/(1+p)}\  ,$$
Applying this, we finally obtain
$$
\|\sigma_0 - \sigma_1\|_1 \leq C\left(  1 + \left( \frac {\delta_{\rm HLS}[\rho]}{T}\right)^{p/4(p+1)}\right) T^{p(1-\epsilon)/4(p+1)} + 
CT^{p(1-\epsilon)/8(p+1)}\ .$$

\qed

\subsection{Continuity properties of the Log-HLS functional:
proof of Theorem \ref{loghlsfunccont}}\label{sect:cont}

In order to prove Theorem \ref{loghlsfunccont},
we begin with a continuity result for the entropy that is of interest in its own right.    
In this section, $p$, $q$, $A$ and $B$ are as in Definition~\ref{goset}.

The next result states the almost Lipschitz continuity of the entropic part of $\F$,
and is not restricted to dimension two.
 
\begin{thm}\label{entcont}  
Then  there is a constant $C$, depending only on $n$, $p$, $q$, $A$ and  $B$,
such that for any
$\rho,\sigma \in {\mathcal M}_{n,p,q,A,B}$,
$$\left|\int_{\R^n} \rho \log \rho(x)\dd x -   \int_{\R^n} \sigma \log \sigma(x)\dd x\right| \leq  C \|\rho - \sigma\|_1 \log\| \rho - \sigma\|_1\ .$$
\end{thm}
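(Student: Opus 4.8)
The plan is to reduce the almost-Lipschitz estimate for $\int \rho\log\rho$ to a purely one-dimensional (real-variable) estimate on the function $s\mapsto s\log s$, using the moment and $L^q$ bounds to control the tails and the high-density regions. Write $h(s):=s\log s$, so that $h'(s)=\log s+1$. The basic pointwise inequality to exploit is
$$
\bigl|h(a)-h(b)\bigr|\le |a-b|\,\bigl(1+\log_+a+\log_+b\bigr)+|a-b|\,\bigl|\log|a-b|\bigr|,
$$
valid for $a,b\ge 0$, which follows by the mean value theorem when $|a-b|$ is comparable to $\max(a,b)$ and by a direct estimate of $h$ near $0$ otherwise (this is where the $|a-b|\log|a-b|$ term — hence the shape of the claimed bound — enters). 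First I would integrate this over $\R^n$, splitting the domain into a large ball $B_R$ and its complement.

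On $B_R$, I would handle the three contributions separately. The term $\int_{B_R}|\rho-\sigma|$ is at most $\|\rho-\sigma\|_1$. For $\int_{B_R}|\rho-\sigma|\,(\log_+\rho+\log_+\sigma)$, I would apply Hölder with exponents $q$ and $q'=q/(q-1)$: the factor $\|\rho-\sigma\|_{q'}$ is bounded by $\|\rho-\sigma\|_1^{1/q'}\|\rho-\sigma\|_q^{1/q}\le (2B)^{1/q}\|\rho-\sigma\|_1^{1/q'}$ by interpolation, while $\|\log_+\rho\|_{L^q(B_R)}$ and $\|\log_+\sigma\|_{L^q(B_R)}$ are controlled since $\log_+ t\le C_{q}\, t$ gives $\|\log_+\rho\|_q\le C\|\rho\|_q^{}\le CB^{1/q}$ — actually one only needs $\log_+\rho\in L^q$ which follows from $\rho\in L^{q}$ for any fixed larger exponent, or simply from $\log_+ t \le \epsilon^{-1} t^{\epsilon}$. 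The term $\int_{B_R}|\rho-\sigma|\,\bigl|\log|\rho-\sigma|\bigr|$ is the main source of the $\log$ loss: using concavity of $s\mapsto s|\log s|$ near $0$ and Jensen's inequality on $B_R$, or more simply the elementary bound $\int_{B_R} g|\log g| \le \|g\|_1\bigl(\log(|B_R|) + \bigl|\log\|g\|_1\bigr|\bigr) + \|g\|_1\log\|g\|_\infty$-type estimates, one gets a bound of the form $C_R\,\|\rho-\sigma\|_1\,\bigl|\log\|\rho-\sigma\|_1\bigr|$ once $\|\rho-\sigma\|_1$ is small; the $L^q$ bound on $\rho,\sigma$ (hence on $\rho-\sigma$) is used to absorb the region where $|\rho-\sigma|$ is large, since there $|\log|\rho-\sigma||$ is comparable to a power of $|\rho-\sigma|$ and Hölder again yields a power $\|\rho-\sigma\|_1^{1-\epsilon}$.

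On the complement $\R^n\setminus B_R$, I would use the moment bound: $\int_{|x|>R}\rho\,dx \le R^{-p}A$, and similarly for $\sigma$, and then $\int_{|x|>R}h(\rho)$ is small because $|h(\rho)|\le \rho\log_+\rho + \rho\log_-\rho \le \rho\,\rho^{\epsilon} + 2\sqrt{\rho}\cdot\sqrt{\rho}\log_-\sqrt{\rho}$ — the standard estimate $\int \rho\log_-\rho \le \int|x|\rho + C$ combined with $\int_{|x|>R}|x|\rho$ being controlled when $p>1$ (split $|x|^p$ versus $|x|$). Choosing $R=R(\|\rho-\sigma\|_1)$ to grow like a small negative power of $\|\rho-\sigma\|_1$ (so that $R^{-p}$ and the $\log R$ loss both stay within the claimed bound) then balances the two regions and yields the theorem; finally one upgrades to Theorem~\ref{loghlsfunccont} by additionally estimating the logarithmic-potential term $\iint \rho(x)\log|x-y|\rho(y)$, splitting $\log|x-y|$ into its positive part (controlled by second moments, hence by $A$ with $p$ there taken $\ge 2$, or by interpolating the $p<2$ moment with the $L^q$ bound) and its singular negative part (controlled by Young/HLS using the $L^q$ bound), each of which is Lipschitz in $\rho$ in $L^1$ against these a priori bounds.

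The main obstacle is the term $\int|\rho-\sigma|\,\bigl|\log|\rho-\sigma|\bigr|$: one must carefully separate the low-density part, where the concave modulus $s|\log s|$ genuinely produces the $\|\rho-\sigma\|_1|\log\|\rho-\sigma\|_1|$ factor (and where one must be careful that the Lebesgue measure of the effective region, $|B_R|$, is itself growing as $\|\rho-\sigma\|_1\to0$, which is why $R$ cannot be chosen too large), from the high-density part, where one instead trades one power of $|\rho-\sigma|$ against the $L^q$ bound via Hölder to get $\|\rho-\sigma\|_1^{1-\epsilon}$. Getting these two regimes to combine into a single clean bound of the stated form, with all constants depending only on $n,p,q,A,B$, is the delicate bookkeeping step; everything else is routine application of Hölder's inequality and the classical $\int\rho\log_-\rho$ moment estimate quoted from \cite{BCC}.
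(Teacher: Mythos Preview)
Your overall architecture---split into $B_R$ and its complement, control the tail by moments and $L^q$, control the interior by a Lipschitz-type bound on $s\mapsto s\log s$, then optimize in $R$---matches the paper's. But your execution on $B_R$ is genuinely different from the paper's, and your tail argument has a gap.

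On $B_R$ you work from the pointwise inequality
\[
|h(a)-h(b)|\le |a-b|\bigl(1+\log_+a+\log_+b\bigr)+|a-b|\,\bigl|\log|a-b|\bigr|
\]
and then handle $\int_{B_R}|\rho-\sigma|\,|\log|\rho-\sigma||$ by Jensen's inequality for the concave function $s\mapsto -s\log s$ on $[0,1]$ (plus H\"older on $\{|\rho-\sigma|>1\}$). This works, but the paper takes a cleaner route that avoids this term entirely: it truncates $\rho$ and $\sigma$ to $\rho_\epsilon,\sigma_\epsilon\in[\epsilon,1/\epsilon]$, uses the uniform Lipschitz bound $|h'|\le 2|\log\epsilon|$ on that interval to get $|h(\rho_\epsilon)-h(\sigma_\epsilon)|\le 2|\log\epsilon|\,|\rho_\epsilon-\sigma_\epsilon|$, and controls the truncation errors on $\{\rho<\epsilon\}$ (size $\epsilon|\log\epsilon|\,|B_R|$) and $\{\rho>1/\epsilon\}$ (size $\epsilon^{q-t}\|\rho\|_q^q$ via Chebyshev) separately. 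The parameter $\epsilon$ is then chosen as a power of $\|\rho-\sigma\|_1$ at the very end. The paper's truncation replaces your Jensen step by an explicit choice of cutoff level, which makes the bookkeeping shorter; your approach is more direct but forces you to deal with $\int g|\log g|$ for $g=|\rho-\sigma|$. Also note that your H\"older step for $\int|\rho-\sigma|\log_+\rho$ with exponents $(q,q')$ only interpolates cleanly when $q>2$; for $1<q\le 2$ you should instead use $\log_+\rho\le \rho^r/r$ and H\"older with exponents $(q/r,\,q/(q-r))$ for small $r$.

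The gap is in your tail estimate for $\rho\log_-\rho$. You invoke the ``standard estimate $\int\rho\log_-\rho\le \int|x|\rho+C$'', but that is a \emph{global} bound and does not give decay in $R$; you need $\int_{|x|>R}\rho\log_-\rho\to 0$ as $R\to\infty$ at a polynomial rate. The paper obtains this by writing $\log_-\rho\le \rho^{-\delta}/\delta$, so $\rho\log_-\rho\le \rho^{1-\delta}/\delta$, and then applying H\"older with a weight:
\[
\int_{|x|>R}\rho^{1-\delta}\le \Bigl(\int|x|^p\rho\Bigr)^{1-\delta}\Bigl(\int_{|x|>R}|x|^{-p(1-\delta)/\delta}\Bigr)^{\delta},
\]
which is $O(R^{-(p(1-\delta)-n\delta)})$ once $\delta$ is small enough that $p(1-\delta)>n\delta$. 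A similar interpolation (between the $p$th moment and the $L^q$ norm) handles $\int_{|x|>R}\rho\log_+\rho$. Your sketch gestures at the right ingredients but does not assemble them into an actual tail decay; filling this in is straightforward once you see the weighted H\"older trick.
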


We first prove two lemmas.

\begin{lm} For all $0<s<p$ there is a  constant $C$, depending only on $s$, $n$, $p$, $q$, $A$ and $B$, such that  
 $$\int_{|x|> R} \rho |\log \rho|(x)\dd x  \leq CR^{-s}\ ,$$
for all $\rho \in   {\mathcal M}_{n,p,q,A,B}$ and all $R>0$.
 \end{lm}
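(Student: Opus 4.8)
The plan is to split the integral $\int_{|x|>R}\rho|\log\rho|\,\dd x$ into the region where $\rho$ is large and the region where $\rho$ is small, since $t\mapsto t|\log t|$ fails to be monotone only near $t=1$, and to control each piece using one of the two available bounds: the moment bound $\int|x|^p\rho\le A$ for the tail, and the $L^q$ bound $\|\rho\|_q^q\le B$ for the large values.

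First I would fix a threshold, say split according to whether $\rho(x)\le 1$ or $\rho(x)>1$. On the set $\{|x|>R\}\cap\{\rho>1\}$ we have $\rho|\log\rho|=\rho\log\rho\le C_q\,\rho^{q}$ (using $\log t\le C_q t^{q-1}$ for $t\ge 1$), so this contribution is bounded by $C_q\int_{|x|>R}\rho^q\,\dd x$. To make this decay in $R$, interpolate against the moment bound: for instance write $\int_{|x|>R}\rho^q = \int_{|x|>R}\rho^{q}|x|^{p\theta}|x|^{-p\theta}$ and apply Hölder, or more simply note that on $\{\rho>1,|x|>R\}$ one has $\rho^q\le \rho^{q-1}\rho\le (\text{pointwise bound})$; the cleanest route is Hölder with exponents $q$ and $q'$: $\int_{|x|>R}\rho^q\le \|\rho^q\|_{?}$ — actually the efficient estimate is $\int_{|x|>R}\rho\,|\log\rho|\mathbf 1_{\rho>1}\le \big(\int \rho^q\big)^{1/q'}\big(\int_{|x|>R}\rho^{\,q}\cdot(\text{something})\big)$. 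A robust choice: $\int_{|x|>R}\rho^q\,\dd x \le R^{-p(q-1)/q \cdot(\dots)}$ — I will instead interpolate $L^q$ and the weighted $L^1$ bound directly, writing $\rho^q = \rho^{q-1}\cdot\rho$ and bounding $\rho^{q-1}\le \|\rho\|_\infty^{q-1}$ is not available, so the right move is: by Hölder with the measure $\dd x$, $\int_{|x|>R}\rho^q = \int_{|x|>R}\big(\rho\,|x|^{p}\big)^{q}|x|^{-pq}\le \big(\int\rho|x|^p\big)^{?}$ — this only works if $q\le$ something. The genuinely clean argument is: $\int_{|x|>R}\rho^q\,\dd x\le \big(\int_{\R^n}\rho^q\,\dd x\big)^{\alpha}\big(\int_{|x|>R}\rho^{q_0}\big)^{1-\alpha}$ is circular. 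So I would instead use: for $0<s<p$, $\int_{|x|>R}\rho\,\dd x\le R^{-p}\int|x|^p\rho\le A R^{-p}$, and then on $\{\rho>1\}$ estimate $\rho\log\rho\le \rho^{1+\epsilon}$ for small $\epsilon$ with $1+\epsilon<q$, and apply Hölder: $\int_{|x|>R}\rho^{1+\epsilon}\le\big(\int\rho^q\big)^{(1+\epsilon-1)/(q-1)\cdot\frac{q-1}{\cdot}}\cdots\big(\int_{|x|>R}\rho\big)^{\cdots}\le B^{\beta}(AR^{-p})^{1-\beta}$, giving decay $R^{-p(1-\beta)}$, and one checks $p(1-\beta)$ can be made to exceed any prescribed $s<p$ by taking $\epsilon$ small (equivalently $q$ could be just barely above $1$). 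That handles the large-$\rho$ piece with the required $R^{-s}$ rate.

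Next, on $\{|x|>R\}\cap\{\rho\le 1\}$ we have $\rho|\log\rho|=-\rho\log\rho=\rho\log(1/\rho)$, and here I use $\log(1/t)\le C_\delta t^{-\delta}$ for $t\in(0,1]$ and any $\delta>0$, so $\rho|\log\rho|\le C_\delta \rho^{1-\delta}$. Then $\int_{|x|>R}\rho^{1-\delta}\,\dd x = \int_{|x|>R}\rho^{1-\delta}|x|^{-s'}|x|^{s'}\,\dd x$; choosing $\delta$ and applying Hölder with exponents $1/(1-\delta)$ and $1/\delta$ against $\rho^{1-\delta}$ and $|x|^{-s'/\delta}\mathbf 1_{|x|>R}$ bounds it by $\big(\int\rho\big)^{1-\delta}\big(\int_{|x|>R}|x|^{-s'/\delta}\big)^{\delta}$, and the second factor is a convergent tail integral comparable to $R^{n-s'/\delta}$ provided $s'/\delta>n$; this produces $R^{-(s'/\delta-n)\delta}=R^{-(s'-n\delta)}$, which exceeds any given $s<p$ once $\delta$ is small and $s'$ is chosen appropriately — but to keep $\int\rho$ finite I should first split off the moment bound, so more carefully: bound $\rho^{1-\delta}\le \rho\cdot\mathbf 1_{\rho>1}+\rho^{1-\delta}\mathbf 1_{\rho\le1}$ — redundant here; instead just write $\int_{|x|>R}\rho^{1-\delta}\le\big(\int_{|x|>R}|x|^{p}\rho\big)^{1-\delta}\big(\int_{|x|>R}|x|^{-p(1-\delta)/\delta}\big)^{\delta}$ by Hölder, legitimate once $p(1-\delta)/\delta>n$, i.e. $\delta$ small; the first factor is $\le(AR^{-p})^{1-\delta}$ only if we can pull out $R^{-p}$ — actually $\int_{|x|>R}|x|^p\rho\le A$ directly, no $R$ gain there, so the $R$-decay comes entirely from the second factor $\asymp R^{n\delta-p(1-\delta)}$, giving overall decay $R^{-(p(1-\delta)-n\delta)}=R^{-(p-(n+p)\delta)}$; as $\delta\to0$ this exponent tends to $p$, so for any prescribed $s<p$ we fix $\delta$ small enough that $p-(n+p)\delta\ge s$. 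Combining the two pieces gives the claimed bound $\int_{|x|>R}\rho|\log\rho|\le CR^{-s}$ with $C=C(s,n,p,q,A,B)$.

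The main obstacle is bookkeeping the exponents so that both pieces genuinely decay at rate $R^{-s}$ for every $s<p$ simultaneously, and checking the Hölder integrability conditions ($\int_{|x|>R}|x|^{-\text{exponent}}<\infty$ in $\R^n$, and the auxiliary powers of $\rho$ staying below $q$) are compatible — in particular verifying that the near-$1$ region, where $t|\log t|$ is bounded anyway, contributes nothing problematic, and that one never needs a bound on $\|\rho\|_\infty$. No single delicate estimate is involved; it is a matter of choosing $\delta$ (and the companion small $\epsilon$) small depending on $s$, $p$, $n$, $q$, and invoking the elementary inequalities $\log t\le C_\epsilon t^{\epsilon}$ ($t\ge1$) and $\log(1/t)\le C_\delta t^{-\delta}$ ($0<t\le1$).
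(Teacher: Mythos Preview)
Your proposal is correct and follows essentially the same approach as the paper: split according to $\rho\gtrless 1$, use the elementary bounds $\log t\le \tfrac{1}{r}t^{r}$ on each piece to replace $\rho|\log\rho|$ by $\rho^{1\pm \text{(small)}}$, and then apply H\"older against the moment bound (and, on the $\rho>1$ side, the $L^q$ bound) to obtain decay $R^{-s}$ with $s$ arbitrarily close to $p$ as the auxiliary parameter tends to zero. The only cosmetic difference is that for the $\rho>1$ piece the paper inserts the weight $|x|^{p(1-\gamma)}$ directly and bounds $\int |x|^{p(1-\gamma)}\rho^{1+r}$ by $A^{1-\gamma}B^{\gamma}$ via H\"older, whereas you first use Chebyshev $\int_{|x|>R}\rho\le AR^{-p}$ and then interpolate $\rho^{1+\epsilon}$ between $\rho$ and $\rho^{q}$; these are the same H\"older step rearranged, yielding identical exponents.
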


\begin{proof}
 We begin by recalling the following elementary inequality:
 for all $r>0$,
 \begin{equation}
 \label{eq:log r}
 |\log s| \leq \frac1r \max \{s^r,s^{-r}\} \qquad \forall\, s>0\,.
 \end{equation}

Now, pick $0< \gamma < 1$ and set
$$r := (q-1)\gamma>0\ .$$
We claim that
\begin{equation}\label{bnds2}
 \int_{\R^n} |x|^{p(1-\gamma)} \rho^{1+r}(x)\dd x \leq  \left( \int_{\R^n} |x|^p \rho(x) \dd x\right)^{1-\gamma} \left(\int_{\R^n}| \rho(x)|^q \dd x\right)^{\gamma}\ .
 \end{equation}
 To see this, define $\beta = q\gamma$ and note that $\beta+(1-\gamma) = 1+r$. Thus by H\"older's inequality,
 \begin{eqnarray}
 \int_{\R^n} |x|^{p(1-\gamma)} \rho^{1+r}(x)\dd x &=& \int_{\R^n} |x|^{p(1-\gamma)} \rho^{1-\gamma}(x)\rho^\beta(x)\dd x\nonumber\\
 &\leq&  \left(\int_{\R^n} (|x|^{p(1-\gamma)} \rho^{1-\gamma}(x))^{1/(1-\gamma)}\dd x\right)^{1-\gamma}
  \left( \int_{\R^n}  (\rho^\beta(x))^{1/\gamma}\dd x\right)^{\gamma}\ ,
  \nonumber
 \end{eqnarray}
 from which the claim follows. 
 
Thus, under the conditions (\ref{bnds1}), 
with $\gamma$ and $r$ chosen as above we have a uniform bound on $ \int_{\R^n} |x|^{p(1-\gamma)} \rho^{1+r}(x)\dd x$. Hence, for $r$ and $\gamma$ chosen as above, using
\eqref{eq:log r} with $s=\rho(x)$
we get
 \begin{eqnarray}
 \label{eq:bound 1}
  \int_{\{|x| \geq R\} \cap\{\rho\geq 1\}}  \rho|\log \rho| (x) \dd x &\leq& \frac1r \int_{|x|> R} \rho^{1+r}(x)\dd x\nonumber\\
  &=& \left(\frac{1}{r}\int_{|x|> R} |x|^{p(1-\gamma)}\rho^{1+r}(x)\dd x\right)R^{-p(1-\gamma)}.
  \end{eqnarray}

Next, we want to consider the set $\{\rho \leq 1\}$. Pick $0< \delta < 1$ and set
$$\alpha := p(1-\delta)\ .$$
We shall require $\alpha > n\delta$ (or equivalently $p/n > \delta/(1-\delta)$), which is always satisfied for $\delta$ sufficiently small.

Then, by \eqref{eq:log r} (with $r=\delta$ and $s=\rho(x)$) and H\"older's inequality, for all $\alpha>n\delta$ we have
\begin{eqnarray}
  \int_{\{|x| \geq R\} \cap\{\rho\leq 1\}}  \rho|\log \rho| (x) \dd x &\leq& \frac1\delta \int_{|x|> R} \rho^{1-\delta}(x)\dd x\nonumber\\
  &=& \frac{1}{\delta}\int_{|x|> R} |x|^{\alpha}\rho^{1-\delta}(x)|x|^{-\alpha}\dd x\nonumber\\
   &\leq& \frac{1}{\delta}\left(
   \int_{\R^n}  |x|^{\alpha/(1-\delta)}\rho(x) \dd x\right)^{1-\delta}  \left( \int_{|x|> R}  |x|^{-\alpha/\delta}\dd x\right)^\delta \nonumber\\
   &=& \frac{1}{\delta}\left(
   \int_{\R^n}  |x|^{p}\rho(x) \dd x\right)^{1-\delta}  \left( \int_{|x|> R}  |x|^{-\alpha/\delta}\dd x\right)^\delta. \nonumber
  \end{eqnarray}
  Computing
  $$\int_{|x|> R}  |x|^{-\alpha/\delta}\dd x =
  \frac{n|B_1^n|\delta}{\alpha -n\delta}R^{-(\alpha - n\delta)/\delta}$$
  and recalling the definition of $\alpha$, we obtain
$$\int_{\{|x| \geq R\} \cap\{\rho\leq 1\}}  \rho|\log \rho| (x) \dd x  \leq 
\frac{1}{\delta}\left(
   \int_{\R^n}  |x|^{p}\rho(x) \dd x\right)^{1-\delta} \left( \frac{n|B_1^n|r}{\alpha -n\delta}\right)^\delta R^{-(p(1-\delta) -n \delta)}\ .$$
   (Here and in the sequel, $|B_1^n|$ denotes the Lebesgue measure of the unit ball in $\R^n$.)
 Combining this bound with \eqref{eq:bound 1} and choosing both $\gamma$ and $\delta$ sufficiently small,  we have the result. 
 \end{proof}

\begin{lm}  For all $0<t<q$, $\rho \in   {\mathcal M}_{n,p,q,A,B}$, and $R>0$, and for all $0 < \epsilon < 1/e$,
it holds
\begin{multline}\label{cdg2}
\left|\int_{|x| \leq R} \rho \log \rho(x)\dd x -   \int_{|x|\leq R} \sigma \log \sigma(x)\dd x\right| \leq \\
2|\log \epsilon| \left[  2\epsilon |B_1^n|R^n + \frac{2}{t-1} \epsilon^{q-t} (|\rho\|_q^q +\|\sigma\|_q^q) + \|\rho- \sigma\|_1\right] \ .
\end{multline}
\end{lm}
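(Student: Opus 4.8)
The plan is to compare the two entropy integrands pointwise after truncating both densities at the levels $\epsilon$ and $1/\epsilon$. Write $\phi(s):=s\log s$, and for a density $\mu$ let $\widetilde\mu:=\min\{1/\epsilon,\max\{\epsilon,\mu\}\}$ denote the truncation of $\mu$ to $[\epsilon,1/\epsilon]$. The starting point is the decomposition
\[
\phi(\rho)-\phi(\sigma)=\bigl(\phi(\rho)-\phi(\widetilde\rho)\bigr)-\bigl(\phi(\sigma)-\phi(\widetilde\sigma)\bigr)+\bigl(\phi(\widetilde\rho)-\phi(\widetilde\sigma)\bigr),
\]
which one integrates over $B_R$ and estimates term by term. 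Throughout one uses that $0<\epsilon<1/e$ forces $|\log\epsilon|>1$, so that stray additive constants can be absorbed into powers of $|\log\epsilon|$; this also makes all the integrals below finite.

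\emph{The core term.} Since $\phi'(s)=\log s+1$ satisfies $|\phi'(s)|\le|\log\epsilon|+1\le 2|\log\epsilon|$ on $[\epsilon,1/\epsilon]$, the map $\phi$ is $2|\log\epsilon|$-Lipschitz there; as $s\mapsto\widetilde s$ is $1$-Lipschitz, this gives $|\phi(\widetilde\rho(x))-\phi(\widetilde\sigma(x))|\le 2|\log\epsilon|\,|\rho(x)-\sigma(x)|$ pointwise, hence $\int_{B_R}|\phi(\widetilde\rho)-\phi(\widetilde\sigma)|\,\dd x\le 2|\log\epsilon|\,\|\rho-\sigma\|_1$.

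\emph{The cutoff terms.} In $\phi(\rho)-\phi(\widetilde\rho)$ (and symmetrically for $\sigma$) the integrand vanishes where $\epsilon\le\rho\le1/\epsilon$, so only two regions contribute. On $\{\rho<\epsilon\}$ one uses that $s\mapsto s|\log s|$ is increasing on $(0,1/e)$, so $\rho|\log\rho|\le\epsilon|\log\epsilon|$ and therefore $|\phi(\rho)-\phi(\epsilon)|\le\rho|\log\rho|+\epsilon|\log\epsilon|\le 2\epsilon|\log\epsilon|$, which integrated over $B_R$ contributes at most $2\epsilon|\log\epsilon|\,|B_1^n|R^n$. On $\{\rho>1/\epsilon\}$ the key observation is that $\phi$ is increasing and positive on $(1/e,\infty)$, so $|\phi(\rho)-\phi(1/\epsilon)|=\phi(\rho)-\phi(1/\epsilon)\le\phi(\rho)$ — this is what lets us avoid the large truncation value $\phi(1/\epsilon)=|\log\epsilon|/\epsilon$; then \eqref{eq:log r} with exponent $t-1$ (so $\log s\le s^{t-1}/(t-1)$ for $s\ge1$, using $1<t<q$), combined with $\rho^{t-q}\le\epsilon^{q-t}$ on $\{\rho>1/\epsilon\}$, gives $\phi(\rho)\le\frac{\epsilon^{q-t}}{t-1}\rho^q$, so this region contributes at most $\frac{\epsilon^{q-t}}{t-1}\|\rho\|_q^q$.

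\emph{Conclusion.} Collecting the core term and the two cutoff terms (the latter both for $\rho$ and for $\sigma$),
\[
\left|\int_{|x|\le R}\phi(\rho)\,\dd x-\int_{|x|\le R}\phi(\sigma)\,\dd x\right|\le 4\epsilon|\log\epsilon|\,|B_1^n|R^n+\frac{\epsilon^{q-t}}{t-1}\bigl(\|\rho\|_q^q+\|\sigma\|_q^q\bigr)+2|\log\epsilon|\,\|\rho-\sigma\|_1,
\]
and since $2|\log\epsilon|\ge 1$ the right-hand side is bounded by $2|\log\epsilon|\bigl[2\epsilon|B_1^n|R^n+\frac{2}{t-1}\epsilon^{q-t}(\|\rho\|_q^q+\|\sigma\|_q^q)+\|\rho-\sigma\|_1\bigr]$, which is \eqref{cdg2}. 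The only genuinely delicate point is the high-density region, where one must drop the truncation value $\phi(1/\epsilon)$ rather than try to control it; everything else is routine Lipschitz-and-cutoff bookkeeping. (The estimate on $\{\rho>1/\epsilon\}$ uses $t>1$; for $0<t\le1$ the right-hand side of \eqref{cdg2} is $+\infty$, so there is nothing to prove.)
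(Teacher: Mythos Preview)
Your proof is correct and follows essentially the same approach as the paper: truncate to $[\epsilon,1/\epsilon]$, use the Lipschitz bound $|\phi'(s)|\le 2|\log\epsilon|$ on the truncated range for the core term, and estimate the truncation error separately on $\{\rho<\epsilon\}$ (via the monotonicity of $s|\log s|$) and on $\{\rho>1/\epsilon\}$ (via $\log s\le s^{t-1}/(t-1)$). Your version is in fact slightly cleaner in two places: you invoke the $1$-Lipschitz property of the truncation map to get $|\widetilde\rho-\widetilde\sigma|\le|\rho-\sigma|$ directly (the paper instead routes through a separate bound on $\|\rho-\rho_\epsilon\|_1$), and on $\{\rho>1/\epsilon\}$ you use the pointwise inequality $\rho^t\le\epsilon^{q-t}\rho^q$ rather than Chebyshev plus H\"older, arriving at the same estimate with less bookkeeping.
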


\begin{proof} Pick $\epsilon$ with $0 < \epsilon < 1/e$ and define
$$\rho_\epsilon(x) :=
\begin{cases}  
\epsilon & \text{if }  \rho(x) \leq \epsilon\\
 \rho(x) &  \text{if } \epsilon < \rho(x) \leq 1/\epsilon\\
1/\epsilon &  \text{if }  \rho(x) \geq 1/\epsilon\end{cases}
\ .$$

Note that $s\mapsto s \log s$ is decreasing on $(0,1/e)$, so
$$
\bigl|s\log s - \epsilon \log \epsilon \bigr| \leq \epsilon \bigl| \log \epsilon \bigr| \qquad \forall \, s \in (0,\epsilon]\, .
$$
Therefore,  
$$\left|\int_{ \{|x| \leq R\}\cap\{\rho \leq \epsilon\}} \rho \log \rho(x)\dd x -  \int_{ \{|x| \leq R\}\cap\{\rho \leq \epsilon\}}\rho_\epsilon \log \rho_\epsilon(x)\dd x\right| \leq \epsilon |\log \epsilon| |B_1^n|R^n \ .$$
In an analogous way,
$$\left|\int_{ \{|x| \leq R\}\cap\{\rho \leq \epsilon\}}  \rho(x)\dd x -  \int_{ \{|x| \leq R\}\cap\{\rho \leq \epsilon\}}\rho_\epsilon(x)\dd x\right| 
\leq \epsilon |B_1^n|R^n \ .$$
Next, by Chebychev's inequality, $|\{ \rho\geq 1/\epsilon\}| \leq \epsilon^q\|\rho\|_q^q$.
Thus, for any $1 < t < q$, applying \eqref{eq:log r} with $r=t-1$ we get
\begin{eqnarray}
\int_{\{|x| \leq R\}\cap \{ \rho\geq 1/\epsilon\}} \rho \log \rho(x)\dd x 
&\leq& 
\frac{1}{t-1}  \int_{\{|x| \leq R\}\cap \{ \rho\geq 1/\epsilon\}}  \rho^t(x)\dd x\nonumber\\
&\leq& 
\frac{1}{t-1} \|\rho\|_q^t \left( |\{ \rho\geq 1/\epsilon\}\|\right)^{(q-t)/q}  \nonumber\\
&\leq& 
\frac{1}{t-1} \epsilon^{q-t}\|\rho\|_q^q \ . \nonumber
 \end{eqnarray}
Hence, 
 $$\left|\int_{ \{|x| \leq R\}\cap\{\rho \geq 1/\epsilon\}} \rho \log \rho(x)\dd x -  \int_{ \{|x| \leq R\}\cap\{\rho \geq 1/\epsilon\}}\rho_\epsilon \log \rho_\epsilon(x)\dd x\right| \leq \frac{1}{t-1} \epsilon^{q-t}\|\rho\|_q^q \ .$$
In a similar way,
 $$\left|\int_{ \{|x| \leq R\}\cap\{\rho \geq 1/\epsilon\}} \rho (x)\dd x -  \int_{ \{|x| \leq R\}\cap\{\rho \geq 1/\epsilon\}}\rho_\epsilon (x)\dd x\right| \leq  \epsilon^{q-1}\|\rho\|_q^q \ .$$
 
Thus, combining all these estimates together, we have
\begin{equation}\label{cdg0}
\left|\int_{ \{|x| \leq R\}} \rho \log \rho(x)\dd x -  \int_{ \{|x| \leq R\}}\rho_\epsilon \log \rho_\epsilon(x)\dd x\right| \leq 
\epsilon |\log \epsilon| |B_1^n|R^n + \frac{1}{t-1} \epsilon^{q-t}\|\rho\|_q^q 
\ ,
\end{equation}
and
\begin{equation}\label{cdg1}
\int_{|x|\leq R}|\rho - \rho_\epsilon|\dd x  \leq    \epsilon |B_1^n|R^n + \epsilon^{q-1}\|\rho\|_q^q\ .
\end{equation}
Of course, we have the analogous estimates for $\sigma$. 

Next, we observe that the derivative of $s\mapsto s \log s$ on $[\epsilon,1/\epsilon]$
is bounded by $2 |\log \epsilon|$.
Hence,
since $\rho_\epsilon$ and $\sigma_\epsilon$ are bounded below by $\epsilon$ and above by $1/\epsilon$, 
$$|\rho_\epsilon \log \rho_\epsilon(x) - \sigma_\epsilon \log \sigma_\epsilon(x)| \leq 2|\log \epsilon| |\rho_\epsilon(x) - 
\sigma_\epsilon(x)| \qquad \forall\, x\in \R^n\ .$$
Integrating over $\{|x| \leq R\}$ we find
$$\int_{\{|x| \leq R\}} |\rho_\epsilon \log \rho_\epsilon(x) - \sigma_\epsilon \log \sigma_\epsilon(x)| \leq 2|\log \epsilon|
\int_{\{|x| \leq R\}} |\rho_\epsilon(x) - 
\sigma_\epsilon(x)| \dd x\ .$$ 
Combining this with (\ref{cdg0}),  (\ref{cdg1}), and the corresponding estimates for $\sigma$, we obtain \eqref{cdg2}.
\end{proof}

\medskip

\noindent{\it Proof of Theorem~\ref{entcont}:} Combining the last two lemmas,
for any $s \in (0,p)$, $t \in (1,q)$ and $R>0$, we have
$$\int_{\R^n} |\rho \log \rho(x) - \sigma \log \sigma(x)| \leq CR^{-s} + 
2|\log \epsilon| \left[ 2 \epsilon |B_1^n|R^n + \frac{2}{t-1} \epsilon^{q-t} (|\rho\|_q^q +\|\sigma\|_q^q) + \|\rho- \sigma\|_1\right] 
\ .$$ 
Choosing $R=\left(\epsilon|\log \epsilon|\right)^{-1/(n+s)}$,
and recalling that we can take $s$ close to $p$ and $t$ close to $1$, we obtain
$$\int_{\R^n} |\rho \log \rho(x) - \sigma \log \sigma(x)| \leq C \epsilon^m + |\log \epsilon| \|\rho - \sigma\|_1\ ,$$
for any $0<m<\min\left\{ \frac{p}{n+p}\ , \ q-1\ \right\} $.
Choosing $\epsilon =  \|\rho - \sigma\|_1^{1/m}$, we conclude the proof. \qed

In the rest of this section we are concerned only with $n=2$. Given a mass density $\rho$ on $\R^2$
such that
\begin{equation}\label{defbnd}
\int_{\R^2} \log(e + |x|^2) \rho(x)\dd x  < \infty \,,
\end{equation}
the {\em Newtonian potential energy} of $\rho$ is given by
\begin{equation}\label{Newton}
{\mathcal U}[ \rho] := \int_{\R^2}\int_{\R^2} \rho(x) \log |x-y|\rho(y) \dd x \dd y\ .
\end{equation}

By the elementary inequality
$$\log_+ |x-y|_+ \leq   \log 2 + \log_+ |x| + \log_+|y|\ ,$$
(recall that $\log_+$ denotes the positive part of $\log$), the condition (\ref{defbnd}) ensures that the integral in (\ref{Newton}) is well-defined, though possibly with the value $-\infty$.

\begin{lm}    With $p$, $q$, $A$ and $B$ as in Definition~\ref{goset}, for all $0 < \epsilon < 1$,
there is an explicitly computable  constant $C$ depending only on  $\epsilon$, $p$, $q$, $A$ and $B$ such that  
and all $\rho,\sigma \in   {\mathcal M}_{2,p,q,A,B}$,
$$\left| {\mathcal U}[ \rho]  - {\mathcal U}[ \sigma] \right| \leq C \|\rho - \sigma\|_1^{1-\epsilon}\ .\ .$$
\end{lm}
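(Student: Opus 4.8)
The plan is to estimate the difference ${\mathcal U}[\rho] - {\mathcal U}[\sigma]$ by writing it as a bilinear expression in $\rho - \sigma$ and splitting the logarithmic kernel into its singular part near the diagonal, its large-scale logarithmic growth, and a bounded middle part. Write
$$
{\mathcal U}[\rho] - {\mathcal U}[\sigma] = \iint (\rho(x)-\sigma(x))\log|x-y|(\rho(y)+\sigma(y))\,\dd x\,\dd y - \iint (\rho(x)-\sigma(x))\log|x-y|(\rho(y)-\sigma(y))\,\dd x\,\dd y,
$$
so it suffices to control, for $\mu \in \{\rho+\sigma\} \cup \{\rho-\sigma\}$ and $h := \rho - \sigma$, the quantity $\iint |h(x)|\,|\log|x-y||\,|\mu(y)|\,\dd x\,\dd y$. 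Here both $\mu$ and $h$ have $L^1$ norms and $L^q$ norms bounded in terms of $A,B$ (for $h$, using $\|h\|_1 \le \|\rho\|_1 + \|\sigma\|_1$ and $\|h\|_q \le \|\rho\|_q + \|\sigma\|_q$), as well as bounded $p$-th moments; the goal is a bound of the form $C\|h\|_1^{1-\epsilon}$ where one factor of $h$ is measured in $L^1$ and the other factor's contribution is absorbed into the constant.

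First I would dispose of the large-distance part: on the region $|x-y| \ge 1$ use $\log|x-y| \le \log(1+|x|) + \log(1+|y|) + \log 2$, so that $\iint_{|x-y|\ge 1} |h(x)| \log|x-y|\, |\mu(y)|$ is bounded by a sum of terms like $\int |h(x)|\log(1+|x|)\dd x \cdot \|\mu\|_1$ and $\|h\|_1 \int |\mu(y)|\log(1+|y|)\dd y$. The moment bound $\int |x|^p \rho \le A$ controls $\int \rho \log(1+|x|)$ outright, but for the factor carrying $h$ in $L^1$ I would interpolate: for $R$ large, $\int_{|x|\le R}|h|\log(1+|x|) \le \log(1+R)\|h\|_1$, while $\int_{|x|>R}|h|\log(1+|x|) \le CR^{-s}$ for any $s<p$ using the uniform moment bound on $\rho,\sigma$ (hence on $|h|$ up to a constant, via $\int |x|^p|h| \le 2A$); optimizing $R$ in terms of $\|h\|_1$ yields a bound $C\|h\|_1 |\log\|h\|_1|$, which is $\le C\|h\|_1^{1-\epsilon}$.

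Next the singular part: on $|x-y| < 1$ we have $|\log|x-y|| = -\log|x-y| \ge 0$, and the kernel $-\log|x-y|\,\mathbf 1_{|x-y|<1}$ lies in $L^r(\R^2)$ for every $r < \infty$. I would use Young's convolution inequality together with an interpolation between $L^1$ and $L^q$: choosing an exponent $r$ close to but larger than $q'$, Hölder gives $\int_{|x-y|<1}(-\log|x-y|)|\mu(y)|\dd y \le \|{-\log|\cdot|}\,\mathbf 1_{B_1}\|_r \|\mu\|_{r'}$ with $r' < q$, and $\|\mu\|_{r'} \le \|\mu\|_1^\alpha \|\mu\|_q^{1-\alpha}$ is bounded; then integrating against $|h(x)|$ costs one factor $\|h\|_1$. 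To squeeze in the small power of $\|h\|_1$ rather than the full first power, I would instead interpolate the $h$-factor: bound the singular double integral both by $C\|h\|_1$ (as just described) and by $C$ (using that $\|\rho\|_1,\|\rho\|_q$ are bounded, so the whole bilinear form is a priori $O(1)$), and note that $C\|h\|_1 \le C\|h\|_1^{1-\epsilon}$ whenever $\|h\|_1 \le 1$, while for $\|h\|_1 \ge 1$ the trivial bound $C \le C\|h\|_1^{1-\epsilon}$ suffices; combining the two regimes gives the claim. Finally, the middle part where $1 \le |x-y|$ is already handled above and $|\log|x-y||$ is bounded on any annulus, so it only contributes $C\|h\|_1$, again absorbed.

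The main obstacle I expect is bookkeeping the moment-based tail estimate for the factor of $h$ carried in $L^1$ and the Young/Hölder exponent chase so that the \emph{same} small loss $\epsilon$ works for every piece; none of the individual estimates is deep, but one must be careful that the moment bound applies to $|\rho - \sigma|$ (which it does, with constant $2A$, since $|\rho-\sigma| \le \rho + \sigma$), that $q>1$ is genuinely used to get the singular kernel into a dual Lebesgue space, and that the logarithmic factor $|\log\|h\|_1|$ coming from the long-range part is harmless because $x|\log x|$ is dominated by $x^{1-\epsilon}$ near $0$. With these pieces assembled one reads off $|{\mathcal U}[\rho]-{\mathcal U}[\sigma]| \le C\|\rho-\sigma\|_1^{1-\epsilon}$, which together with Theorem~\ref{entcont} and the scale-invariance reformulation of $\F$ in terms of ${\mathcal U}$ proves Theorem~\ref{loghlsfunccont}.
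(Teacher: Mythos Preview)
Your approach is correct and close to the paper's, with one harmless algebraic slip and one genuine difference in execution.

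The slip: your bilinear identity is miswritten. By the symmetry of $\log|x-y|$ one already has
\[
\mathcal{U}[\rho]-\mathcal{U}[\sigma]=\iint(\rho(x)-\sigma(x))\log|x-y|\,(\rho(y)+\sigma(y))\,\dd x\,\dd y,
\]
so only the case $\mu=\rho+\sigma$ is needed; the second term you wrote does not belong. Since you bound both terms anyway, nothing breaks.

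The genuine difference is in the far-field estimate. The paper splits $\log=\log_+-\log_-$ and, for the $\log_+$ piece, uses $\log_+|x|\le \tfrac{1}{r}|x|^r$ together with a single H\"older interpolation
\[
\int_{\R^2}|\rho-\sigma|\,|x|^r\,\dd x\le \|\rho-\sigma\|_1^{1-r/p}\Bigl(\int_{\R^2}|x|^p(\rho+\sigma)\,\dd x\Bigr)^{r/p},
\]
taking $r=\epsilon p$ to obtain $C\|\rho-\sigma\|_1^{1-\epsilon}$ directly. Your truncate-at-$R$-and-optimize argument is a valid alternative; it even yields the slightly sharper $\|\rho-\sigma\|_1\,|\log\|\rho-\sigma\|_1|$ before you pass to $\|\rho-\sigma\|_1^{1-\epsilon}$, at the cost of a few more lines.

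For the near-field ($\log_-$) part the two arguments coincide: the paper also places the singular kernel $|x-y|^{-r}\mathbf{1}_{\{|x-y|<1\}}$ in $L^{q/(q-1)}$, pairs it with $\|\rho\|_q+\|\sigma\|_q$, and obtains the full first power $C\|\rho-\sigma\|_1$, which is then absorbed into $C\|\rho-\sigma\|_1^{1-\epsilon}$ via the a priori bound on $\|\rho-\sigma\|_1$ --- exactly your case split on $\|h\|_1\lessgtr 1$.
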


\begin{proof} We define
$$ {\mathcal U}_+[ \rho]  :=  \int_{\R^2}\int_{\R^2} \rho(x) \log_+ |x-y|\rho(y) \dd x \dd y  \quad{\rm and}\quad 
 {\mathcal U}_-[ \rho]  :=  \int_{\R^2}\int_{\R^2} \rho(x) \log_- |x-y|\rho(y) \dd x \dd y\ .$$
Then using $\log_+|x|  \leq (1/r)|x|^{r}$ (see \eqref{eq:log r}) and likewise for $y$,
we obtain
\begin{eqnarray}
\left| {\mathcal U}_+[ \rho]  - {\mathcal U}_+[ \sigma] \right| &\leq& 
\int_{\R^2} |\rho(x) - \sigma(x)| \left(\int_{\R^2} (2 + \log_+|x| + \log_+|y|) [\rho(y) + \sigma(y)] \dd y \right)\dd x \nonumber\\
&\leq& C\int_{\R^2} |\rho(x) - \sigma(x)| \left( 1 + \frac1r |x|^r\right)\dd x\ .\nonumber
\end{eqnarray}
Hence, if $r \in (0,p)$, using H\"older's inequality we can estimate
$$\int_{\R^2} |\rho(x) - \sigma(x)|^{(p-r)/p}   |\rho(x) - \sigma(x)|^{r/p} |x|^r\dd x \leq \| \rho - \sigma\|_1^{(p-r)/p} 
\left(\int_{\R^2}[\rho(x)|x|^p +\sigma(x) |x|^p\dd x\right)^{r/p}\ .$$
Choosing $r = \epsilon p$, we get
\begin{equation}\label{new2}
\left| {\mathcal U}_+[ \rho]  - {\mathcal U}_+[ \sigma] \right|  \leq C \|\rho - \sigma\|_1^{1-\epsilon}\ ,
\end{equation}
for some constant $C$ depending only on $\epsilon$, $p$,  $A$ and $B$.

Next, for all $0< r <  2(q-1)/q$, by \eqref{eq:log r}
\begin{eqnarray}
\left| {\mathcal U}_-[ \rho]  - {\mathcal U}_-[ \sigma] \right| &\leq& 
\int_{\R^2} |\rho(x) - \sigma(x)| \left(\int_{\R^2}\log_-|x-y| [\rho(y) + \sigma(y)] \dd y \right)\dd x \nonumber\\
&\leq& \int_{\R^2} |\rho(x) - \sigma(x)| \left(\int_{\{|x-y| \leq 1\}}\frac{1}{r}|x-y|^{-r} [\rho(y) + \sigma(y)] \dd y \right)\dd x \nonumber\\
&\leq& \int_{\R^2} |\rho(x) - \sigma(x)| [\|\rho\|_q + \|\sigma\|_q]  \left(\int_{\{|y| \leq 1\}}\frac{1}{r}|y|^{-rq/(q-1)} \dd y  \right)^{(q-1)/q}\dd x \nonumber\\
&=& \|\rho - \sigma\|_1 [\|\rho\|_q + \|\sigma\|_q]  \left(\int_{\{|y| \leq 1\}}\frac{1}{r}|y|^{-rq/(q-1)} \dd y  \right)^{(q-1)/q}\ . \nonumber
\end{eqnarray}
The integral on the right is clearly finite for our choice of $r$, and we conclude that
\begin{equation}\label{new3}
\left| {\mathcal U}_-[ \rho]  - {\mathcal U}_-[ \sigma] \right|  \leq C \|\rho - \sigma\|_1 \ ,
\end{equation}
for some $C$ depending only on  $q$ and $B$.
Combining (\ref{new2}) and (\ref{new3}) we obtain the result. 
\end{proof}

\medskip
\noindent{\it Proof of Theorem~\ref{loghlsfunccont}:} The theorem follows directly from the  results proved  in this subsection. \qed

\end{document}